\newcommand{\bvec}[1]{{#1}}
\newcommand{\highds}{\bvec{s}}
\newcommand{\highdtx}{\tilde{\bvec{x}}}
\theoremstyle{thmstyleone}%
\newtheorem{theorem}{Theorem}
\newtheorem{lemma}[theorem]{Lemma}
\theoremstyle{thmstyletwo}%
\newtheorem{example}{Example}%
\theoremstyle{thmstylethree}%
\newtheorem{assumption}[theorem]{Assumption}
\begin{document}

\title[Optimization-Free Diffusion Model - A Perturbation Theory Approach]{Optimization-Free Diffusion Model - A Perturbation Theory Approach}


\author[1]{\fnm{Yuehaw} \sur{Khoo}}\email{ykhoo@uchicago.edu}
\author[2]{\fnm{Mathias} \sur{Oster}}\email{oster@igpm.rwth-aachen.de}
\equalcont{Corresponding author}

\author[3]{\fnm{Yifan} \sur{Peng}}\email{yifanpeng@uchicago.edu}

\affil[1]{\orgdiv{CCAM and Department of Statistics}, \orgname{University of Chicago}, \orgaddress{\street{5747 S Ellis Avenue}, \city{Chicago}, \postcode{60637}, \state{IL}, \country{United States}}}

\affil[2]{\orgdiv{IGPM}, \orgname{RWTH Aachen}, \orgaddress{\street{Templergraben 55}, \city{Aachen}, \postcode{52062}, \state{NRW}, \country{Germany}}}

\affil[3]{\orgdiv{CCAM}, \orgname{University of Chicago}, \orgaddress{\street{5747 S Ellis Avenue}, \city{Chicago}, \postcode{60637}, \state{IL}, \country{United States}}}

\abstract{Diffusion models have emerged as a powerful framework in generative modeling, typically relying on optimizing neural networks to estimate the score function via forward SDE simulations. In this work, we propose an alternative method that is both optimization-free and forward SDE-free. By expanding the score function in a sparse set of eigenbasis of the backward Kolmogorov operator associated with the diffusion process, we reformulate score estimation as the solution to a linear system, avoiding iterative optimization and time-dependent sample generation. We analyze the approximation error using perturbation theory and demonstrate the effectiveness of our method on high-dimensional Boltzmann distributions and real-world datasets. \\
\textbf{MSC: 65C20,65M70,68T05,68T09,62G09}}

\keywords{Diffusion models, Optimization-free methods, Perturbation theory, Backward Kolmogorov operator eigenfunctions}



\maketitle

\section{Introduction}\label{sec1}
Generative modeling, one of most active areas in machine learning, aims to learn the underlying data distribution from samples so that new samples can be generated that closely resemble the original dataset. Among the leading approaches, diffusion models~\cite{hyvarinen2005estimation} have emerged as a powerful framework. These models rely on estimating the score function—the gradient of the log-probability density—to model and generate samples from complex distributions. More specifically, diffusion models typically learn the score function $\highds(t, \bvec{x})$ by minimizing the following cost functional:
\begin{equation}\label{eq: cost functional intro}
    \min_{\highds(t,\bvec{x})}\int_0^T \int_{\mathbb{R}^d} \frac{1}{2}\|\highds(t,\bvec{x}) - \nabla \log \rho_t(\bvec{x})\|^2  \rho_t(\bvec{x}) \mathrm{d}\bvec{x} \mathrm{d}t .
\end{equation}
Here, $T$ is a chosen time horizon and $\rho_t(\bvec{x})$ is the law of $\bvec{x}_t$ governed by the overdamped Langevin dynamics:
\begin{equation}\label{eq:overdamp langevin dynamic}
    d\bvec{x}_t = -\nabla V(\bvec{x}_t) \mathrm{d}t + \sqrt{2\beta^{-1}}\mathrm{d}\bvec{w}_t, \quad \bvec{x}_0 \sim \rho_0,
\end{equation}
where $V$ is a user-specified potential function, $\beta$ is the inverse temperature, and $w_t$ is standard Brownian motion. This dynamic gradually transforms initial samples from the data distribution $\rho_0$ toward a tractable base distribution $\rho_\infty$, which corresponds to the stationary distribution of the process:
\begin{equation}\label{eq: stationary distribution}
    \rho_\infty(\bvec{x}) = \frac{1}{Z} \exp(-\beta V(\bvec{x})),
\end{equation}
where $Z$ is the normalization constant.\par 
Once the score function is learned, new samples can be generated by simulating the reverse-time stochastic differential equation (SDE) corresponding to~\eqref{eq:overdamp langevin dynamic}, starting from samples generated from the base distribution $\rho_\infty$ (which approximates $\rho_T$ when $T$ is sufficiently large) \cite{anderson1982reverse} and then propagating backwards in time: 
\begin{align}\label{eq:reverse SDE}
        d\highdtx_{T - t} &= \left(\nabla V(\highdtx_{T - t}) +  2{\beta}^{-1}\highds(T-t,\highdtx_{T-t})\right) \mathrm{d}t + \sqrt{2{\beta}^{-1}} \mathrm{d}\bvec{w}_{T-t}, \highdtx_T \sim \rho_\infty.
\end{align}
After evolving this process from $t=T$ to $t=0$, the resulting samples $\highdtx_0$ are the newly generated data that closely resemble the original dataset.
\par

To represent the score function $\highds(t,\bvec{x})$ one usually employs neural networks, achieving state-of-the-art performance in generating high-quality images, audio, and other complex data types~\cite{song2019generative, dathathri2019plug, song2020improved, song2020score, meng2021sdedit}. In terms of theoretical guarantees, several recent works~\cite{lee2022convergence, lee2023convergence} have shown that a small $L_2$ error of the estimated score function implies a small discrepancy between the distribution generated by the reverse-time SDE~\eqref{eq:reverse SDE} and the underlying ground truth distribution. In the neural network setting, it is often difficult to rigorously justify the assumption of a small score estimation error due to the challenges associated with non-convex optimization and the lack of guarantees on global convergence. In this work, we address this issue by exploring an alternative approach aside of neural networks, aiming to understand and quantify how accurately the score function can be estimated based on the number of degrees of freedom in the perturbative regime.

\subsection{Contributions}
Here we summarize our contributions: 

\begin{enumerate}    
    \item We propose an optimization-free and forward SDE-free approach for diffusion models. Specifically, when representing the score function by eigenfunctions of the backward Kolmogorov operator associated with base distribution \eqref{eq: stationary distribution}, the score can be obtained via solving linear systems. Furthermore, the coefficient matrix in the linear systems can be computed without relying on any forward SDE simulation.

    \item We provide a theoretical analysis of the score estimation error using perturbation theory, under the assumption that the underlying density is a perturbation of the base distribution~\eqref{eq: stationary distribution}. We show that the score error can be bounded in terms of perturbation scale parameter and the number of basis functions used to represent the score.

    \item We overcome the curse-of-dimensionality by projecting the score function onto the span of a \emph{cluster basis}, which is a sparse subset of the high-dimensional eigenfunctions of the backward Kolmogorov operator. This basis captures low-order correlations between the variables. The coefficients for the score function can be obtained using fast numerical techniques for solving linear system in high-dimensional settings. As a result, the overall computation cost is reduced to $O(Nd^2 + N_td^2)$ cost where $N$ is the number of initial samples and $N_t$ is the number of time steps. We demonstrate the effectiveness of our method on several high-dimensional Boltzmann distributions and real-world image datasets, achieving favorable results.

\end{enumerate}

\subsection{Related Work}

\quad \textbf{Diffusion models overview:} Research on diffusion models has primarily centered around two dominant formulations. The first approach is based on forward SDE simulation \cite{ho2020denoising,sohl2015deep,song2020score,song2021maximum}, where a stochastic differential equation (typically an Ornstein–Uhlenbeck process) is used to gradually diffuse data into noise. The score function is then estimated by training a neural network. For example, the Denoising Diffusion Probabilistic Model (DDPM) \cite{ho2020denoising,sohl2015deep} uses a pair of Markov chains to progressively corrupt and subsequently recover the data, with the reverse process learned via deep neural networks. Score-based generative models (SGMs) \cite{song2019generative,song2020improved,vahdat2021score} apply a sequence of increasingly strong Gaussian noise to the data and learn the corresponding score functions across time. The second formulation is based on flow matching \cite{lipman2022flow,liu2022flow,liu2022rectified}, which directly learns a time-dependent vector field that couples the target and base distributions, rather than estimating score functions through a forward stochastic process. Extensions of this approach include method based on stochastic interpolants \cite{albergo2022building,albergo2023stochastic,albergo2023stochastic2}, which interpolate between distributions using latent variables. For an overview of these methods, see \cite{yang2023diffusion,chan2024tutorial,ma2025efficient}. Our work falls into the first category, where the forward SDE is modeled as an overdamped Langevin dynamics with a user-specified potential function. Importantly, the stationary distribution associated with this dynamic is not restricted to the Gaussian case; instead, we generalize it to a broader class of distributions that remain amenable to efficient sampling.

\textbf{Error analysis of score estimation:} Many recent works have provided an error analysis for score estimation in diffusion models when using neural network as the model class. In terms of approximation error, \cite{chen2023score} considers a local Taylor approximation of the score function using neural networks, focusing on data supported on low-dimensional linear subspaces. The work demonstrates that a simplified U-Net architecture with linear encoder and decoder can achieve an efficient score approximation. In a different direction, \cite{oko2023diffusion} develops the theoretical foundations for score approximation when the initial density lies in a Besov space. In terms of statistical error, \cite{block2020generative} analyzes the Rademacher complexity of neural network classes for score representation and \cite{chen2023score,oko2023diffusion} provide nonparametric statistical perspectives on this type of error. In terms of optimization error, \cite{han2024neural} studies the optimization guarantees for score estimation using two-layer neural networks. Additional related works are surveyed in the comprehensive overview \cite{chen2024overview}. In contrast to these approaches, our work is completely based on linear regression, thereby avoiding the intractable optimization error often associated with complex neural network architectures. Furthermore, we leverage perturbation theory to provide a clear and rigorous analysis of the approximation error, providing a new perspective on score estimation.

\subsection{Organization}

We conclude this section by outlining the organization of the paper. In Section~\ref{sec: main idea}, we introduce the main idea behind our method, explaining how the score function is represented using a set of eigenfunctions. Section~\ref{sec: 1-d case} provides a detailed explanation of our approach for single-variable distributions and Section~\ref{sec: high-d case} extends the discussion to the high-dimensional case, where we incorporate additional techniques to surmount the curse of dimensionality. In Section~\ref{sec: theory}, we present a theoretical analysis of the approximation error based on perturbation theory. Section~\ref{sec: numerical} contains our numerical results, which include both synthetic examples from Boltzmann distributions and real-world data such as the MNIST dataset. Finally, Section~\ref{sec: conclusion} concludes the paper.

\section{Main Idea -- Using Spectral Methods for Diffusion Models}\label{sec: main idea}

In this paper, we propose a new perspective on diffusion models that relies entirely on numerical linear algebra, rather than neural network representations, thereby yielding an optimization-free approach. Specifically, we represent the score function as an expansion in a set of basis functions $\{f_k\}_{k\in \mathcal{S}}$ where $\mathcal{S}$ is a chosen index set:
\begin{equation}\label{eq: score representation}
   s^{(i)}(t,\bvec{x}) = \sum_{k\in \mathcal{S}} c_{k}^{(i)}(t)f_{k} (\bvec{x}) -\beta \partial_{x_i} V(\bvec{x}) = (\bvec{c}^{(i)}(t))^T \bvec{f}(\bvec{x}) -\beta \partial_{x_i} V(\bvec{x}), 
\end{equation}
where $s^{(i)}(t,\bvec{x})$ is $i$-th component of the score $\highds(t,\bvec{x})$ and $c^{(i)}_k(t)$ are the corresponding time-dependent coefficients. Note that, according to \eqref{eq: stationary distribution}, the ground-truth score function in the limit $t \rightarrow \infty$ is $\highds(\infty, \bvec{x}) = \nabla \log \rho_\infty(\bvec{x}) = -\beta \nabla V(\bvec{x})$. Thus, the advantage of including the term $-\beta \nabla V(\bvec{x})$ in~\eqref{eq: score representation} is that the corresponding coefficients $c^{(i)}_k(t)$ naturally decline to zero as $t \rightarrow \infty$, simplifying the score estimation for large $t$.\par 

To estimate the score function, the objective in~\eqref{eq: cost functional intro} can be equivalently reformulated using integration by parts \cite{hyvarinen2005estimation} as: 
\begin{equation}\label{eq: cost functional intro2}
    \min_{\highds(t,\bvec{x})}\int_0^T \int_{\mathbb{R}^d}  \left(\frac{1}{2}\|\highds(t,\bvec{x})\|^2  + \mathrm{div}(\highds(t,\bvec{x})) \right)\rho_t(\bvec{x}) \mathrm{d}\bvec{x} \mathrm{d}t,
\end{equation}
which is often more convenient for optimization. Given a time grid $t_{\text{grid}} \subset [0, T]$, the minimization problem in~\eqref{eq: cost functional intro2} reduces to independently minimizing the coefficients $\bvec{c}^{(i)}(t)$ for each time $t \in t_{\text{grid}}$ and each dimension index $i$:
\begin{align}\label{eq: minimization sec2}
\min_{\bvec{c}^{(i)}(t)} & \frac{1}{2}\sum_{k,k' \in \mathcal{S}} c^{(i)}_k(t) c^{(i)}_{k'}(t) \int \rho_t (\bvec{x})f_k(\bvec{x})f_{k'}(\bvec{x})\mathrm{d}\bvec{x} \nonumber\\
&+ \sum_{k\in \mathcal{S}} c^{(i)}_k(t)\int \rho_t(\bvec{x}) \left(\partial_{x_i}f_k(\bvec{x})-\beta f_k(\bvec{x})\partial_{x_i}V(\bvec{x})\right)\mathrm{d}\bvec{x}. 
\end{align}
For simplicity, we denote by $A(t)\in \mathbb{R}^{|\mathcal{S}|\times |\mathcal{S}|}$ and $\bvec{b}^{(i)}(t)\in \mathbb{R}^{|\mathcal{S}|\times 1}$ the matrices:

\begin{align}\label{eq: A and b main}
  A_{k,k'}(t)=\int \rho_t (\bvec{x})f_k(\bvec{x})f_{k'}(\bvec{x})\mathrm{d}\bvec{x}, \quad b^{(i)}_k(t)=\int \rho_t(\bvec{x})\left(\partial_{x_i}f_k(\bvec{x})-\beta f_k(\bvec{x})\partial_{x_i}V(\bvec{x})\right)\mathrm{d}\bvec{x}.
\end{align}
The optimization problem \eqref{eq: minimization sec2} becomes 
\begin{equation}\label{eq: linear system intro}
    \min_{\bvec{c}^{(i)}(t)\in \mathbb{R}^{|\mathcal{S}|\times 1}} \frac{1}{2}{\bvec{c}^{(i)}(t)}^T A(t) \bvec{c}^{(i)}(t) + {\bvec{c}^{(i)}(t)}^T\bvec{b}^{(i)}(t)
\end{equation}
whose solution satisfies the linear system $A(t)\bvec{c}^{(i)}(t) = -\bvec{b}^{(i)}(t)$. To simplify the notation, we aggregate the solutions for all dimension indices into a single linear system:
\begin{equation}\label{eq: linear system main high-d}
    A(t)C(t)  = -B(t),
\end{equation}
where $C(t)=[\bvec{c}^{(1)}(t),\cdots, \bvec{c}^{(d)}(t)]\in \mathbb{R}^{|\mathcal{S}|\times d}$ and $B(t)=[\bvec{b}^{(1)}(t),\cdots, \bvec{b}^{(d)}(t)]\in \mathbb{R}^{|\mathcal{S}|\times d}$. By repeating this procedure for all $t \in t_{\text{grid}}$ and each dimension index $i$, we complete the computation of the score function $\highds(t, \bvec{x})$. We refer to our approach as \textit{optimization-free} because solving \eqref{eq: linear system main high-d} can be efficiently handled via standard linear system solvers, without the need for iterative optimization methods such as gradient descent.

The remaining task is to efficiently compute $A(t)$ and $\bvec{b}^{(i)}(t)$ in \eqref{eq: A and b main}. A common approach is to estimate the integrals inside $A(t)$ and $\bvec{b}^{(i)}(t)$ via Monte Carlo integration, using samples $\bvec{x}_t$ generated by the Euler-Maruyama scheme (or other numerical SDE methods) applied to~\eqref{eq:overdamp langevin dynamic}. This method incurs a dominant computational cost of $O(N N_t |\mathcal{S}|^2)$, where $N$ is the sample size and $N_t$ is the number of time steps. In contrast, we propose an alternative approach that avoids generating $\bvec{x}_t$ at each time $t$, allowing us to compute the required integrals more efficiently and significantly reduce the overall computational cost. \par

Here we provide the intuition of our approach. Note that the overdamped Langevin dynamics in \eqref{eq:overdamp langevin dynamic} is associated with a backward Kolmogorov operator
\begin{equation}\label{eq:K operator intro}
    \mathcal{L} = -\nabla V(\bvec{x})\cdot\nabla + \beta^{-1}\Delta,
\end{equation}
whose adjoint operator, known as the Fokker-Planck operator, describes the time evolution of probability density $\rho_t(\bvec{x})$:
\begin{equation}\label{eq: Fokker-planck}
   \partial_t \rho_t(\bvec{x}) = \mathcal L^*\rho_t(\bvec{x}), \quad \text{where}\quad \mathcal L^*\rho_t(\bvec{x}) = \nabla\cdot(\nabla V(\bvec{x})\rho_t(\bvec{x})) + \beta^{-1}\Delta\rho_t(\bvec{x}).
\end{equation}
Therefore, if we choose the basis functions $\{f_k\}_{k\in \mathcal{S}}$ as the eigenfunctions of the backward Kolmogorov operator $\mathcal{L}$: 
\begin{equation*}
   \mathcal{L}f_k=\lambda_k f_k,
\end{equation*}
the integration of these eigenfunctions w.r.t. the time-evolving density $\rho_t(\bvec{x})$ is straightforward:
\begin{equation}\label{eq: eigenfunction integral}
\int f_k(\bvec{x}) \rho_t(\bvec{x})\mathrm{d}\bvec{x} = \int f_k(\bvec{x}) e^{\mathcal L^* t}\rho_0(\bvec{x})\mathrm{d}\bvec{x} = \int \rho_0(\bvec{x})e^{\mathcal L t} f_k(\bvec{x}) \mathrm{d}\bvec{x} = e^{\lambda_k t}\int f_k(\bvec{x})\rho_0(\bvec{x})\mathrm{d}\bvec{x},
\end{equation}
where $\lambda_k$ is the corresponding eigenvalue. Given data samples $\{\bvec{x}^{(j)}\}_{j=1}^N$ from the initial distribution $\rho_0(\bvec{x})$, the integral $\int f_k(\bvec{x})\rho_0(\bvec{x})\,\mathrm{d}\bvec{x}$ in the right hand side of \eqref{eq: eigenfunction integral} can be approximated via Monte Carlo integration as $\frac{1}{N} \sum_{j=1}^N f_k(\bvec{x}^{(j)})$. This allows us to approximate the integrals involving $\rho_t(\bvec{x})$ through \eqref{eq: eigenfunction integral}, without simulating the forward SDE as is generally done. In order to efficiently exploit equation~\eqref{eq: eigenfunction integral} in the computation of the integrals inside $A(t)$ and $\bvec{b}^{(i)}(t)$, we project the product $f_k(\bvec{x})f_{k'}(\bvec{x})$ and derivatives $\partial_{x_i}f_k(\bvec{x}) - \beta f_k(\bvec{x})\partial_{x_i}V(\bvec{x})$ onto the span of eigenfunctions $\{f_k\}_{k\in \mathcal{S}'}$, i.e.
\begin{equation}\label{eq:linear interpolation}
  f_k(x)f_{k'}(x)= \sum_{l\in \mathcal{S}'} u^{(k,k')}_{l}f_l(x), \quad\partial_{x_i}f_k(x)=\sum_{l\in \mathcal{S}'} v^{(i,k)}_{l}f_l(x),\quad \partial_{x_i} V(x) =  \sum_{l\in \mathcal{S}'} q^{(i)}_lf_l(x).
\end{equation}
Then we are able to compute $A(t)$ and $\bvec{b}^{(i)}(t)$ following \eqref{eq: eigenfunction integral}: 
\begin{equation}\label{eq: compute A 1d}
    A_{k,k'}(t)=\sum_{l\in \mathcal{S}'} u^{(k,k')}_{l} \int e^{\mathcal{L}t}(f_l(x)) \rho_0(x)\mathrm{d}x = \sum_{l\in \mathcal{S}'} u^{(k,k')}_{l} e^{\lambda_l t}\int f_l(x)\rho_0(x)\mathrm{d}x,
\end{equation}
\begin{align}\label{eq: compute b 1d}
    b^{(i)}_k(t) 
    &= \int e^{\mathcal{L}t} \left( \sum_{l\in\mathcal{S}'} v^{(i,k)}_l f_l(x) 
        - \beta\sum_{j\in \mathcal{S}'} q^{(i)}_j f_k(x) f_j(x) \right) \rho_0(x)\mathrm{d}x \nonumber \\
    &= \int e^{\mathcal{L}t} \left( \sum_{l\in \mathcal{S}'} v^{(i,k)}_l f_l(x) 
        - \beta\sum_{j\in \mathcal{S}'} q^{(i)}_j \sum_{l\in \mathcal{S}'} u^{(k,j)}_l f_l(x) \right) \rho_0(x)\mathrm{d}x \nonumber \\
    &= \sum_{l\in \mathcal{S}'} v^{(i,k)}_l e^{\lambda_l t} \int f_l(x) \rho_0(x) \mathrm{d}x
     - \beta \sum_{l\in \mathcal{S}'}(\sum_{j\in \mathcal{S}'} q^{(i)}_j u^{(k,j)}_l) e^{\lambda_l t} \int f_l(x) \rho_0(x) \mathrm{d}x.     
\end{align}
The choice of the new set $\mathcal{S}'$ depends on the base distribution and its associated eigenfunctions, ensuring that the representation in~\eqref{eq:linear interpolation} holds with low or even no interpolation error. Importantly, Monte Carlo integration is required only once using $\rho_0(\bvec{x})$ at the initial step, and the resulting quantities can be reused across all time steps $t$. This strategy significantly reduces computational complexity compared to conventional simulation-based methods, where we describe our method as \textit{forward SDE-free}.

We summarize our optimization-free and forward SDE-free approach for learning the score function in the following Algorithm~\ref{algorithm: main}. Then later in Section~\ref{sec: 1-d case} and Section~\ref{sec: high-d case}, we elaborate on the implementational details of Algorithm~\ref{algorithm: main}.
\begin{algorithm}[H]
\caption{Optimization-free Diffusion Model}\label{algorithm: main} 
\begin{algorithmic}[1]
    \State \textbf{Input}: Initial data $\{\bvec{x}^{(j)}\}_{j=1}^N$ sampled from $\rho_0$. Time-grid $t_{\text{grid}}\in [0,T]$. 
    \State \textbf{Select base distribution}: Select a base distribution in \eqref{eq: stationary distribution}. 
    \State \textbf{Compute eigenfunction}: Compute associated eigenfunctions of \eqref{eq:K operator intro} and choose a set of eigenfunctions $\{f_k\}_{k\in \mathcal{S}}$ for score representation.  
    \State \textbf{Solve for Score}: Compute integral $\{\int f_k(x)\rho_0(x)\mathrm{d}x\}_{k\in \mathcal{S}'}$ via Monte Carlo integration $\{\frac{1}{N} \sum_{j=1}^N f_k(\bvec{x}^{(j)})\}_{k\in \mathcal{S}'}$. $\mathcal{S}'$ is the set for expansion in \eqref{eq:linear interpolation}. 
     \For {$ t  \in t_{\text{grid}}$} 
     \State Compute coefficient $A(t)$ in \eqref{eq: compute A 1d} and $B(t)$ in \eqref{eq: compute b 1d}.
     \State Solve linear system in \eqref{eq: linear system main high-d}.
    \State Compute score function $s(t,x)$ in \eqref{eq: score representation}. 
  \EndFor
  \State \textbf{Output}: Score function $\{s(t,x)\}_{t\in t_{\text{grid}}}$. 
\end{algorithmic}
\end{algorithm}

\section{Details for Implementing the Proposed Method}
One of the key tasks in Algorithm \ref{algorithm: main} is to choose the base distribution and a rich but small subset of the corresponding eigenbasis. Furthermore, efficiently solving the score is important to make the algorithm feasible for high-dimensional problem settings. In the following, we will give some examples of base distributions and the corresponding eigenbasis. Crucially, in order to make the spectral decomposition of the score computationally accessible in high dimensions, one needs to go beyond smoothness assumptions and takes other hidden structures into account. To this end, we will exploit so-called locality assumptions by expanding the score in a cluster basis. 
\subsection{Application of Main Algorithm: Single-variable Distribution}
\label{sec: 1-d case}
We will start by exemplifying the necessary steps of Algorithm \ref{algorithm: main} in the case of single-variable distributions focusing on potential base distributions, eigenfunctions and the complexity of the optimization problem. All notions introduced here are also used in the multivariate case and the single-variable setting is only to have a clean presentation of the ideas.

\subsubsection{Examples for Base Distributions}

The most commonly used base distribution in diffusion models is a \emph{Gaussian distribution}, described by the potential $V(x)=\frac{1}{2}\alpha x^2$. The associated overdamped Langevin dynamic is the Ornstein-Uhlenbeck process and the associated eigenfunctions are (properly scaled) Hermite polynomials.

Alternatively, we consider a \emph{uniform distribution} over the compact domain $[-L,L]$. It turns out that by choosing the potential $V(x)=1$ and imposing periodic boundary conditions for the diffusion process, i.e. a periodic Brownian motion, the stationary measure of the associated Langevin dynamics is the uniform distribution. The corresponding eigenfunctions are trigonometric polynomials.

Further computation details can be found in \cite{pavliotis2014stochastic}.

\subsubsection{Choosing a good Ansatz based on Eigenfunction}\label{sec: 1d eigenfunction}

After choosing the base distribution and obtaining the associated eigenfunctions, we discuss how to select an appropriate subset of eigenfunctions to achieve an accurate score representation. Recall that the expansion in~\eqref{eq: eigenfunction integral} decays exponentially in time at a rate determined by $|\lambda_k|$, where all eigenvalues of the operator $\mathcal{L}$ in~\eqref{eq:K operator intro} are non-positive. Based on this observation, we sort the eigenfunctions in order of increasing $|\lambda_k|$ and prioritize those with smaller magnitudes, as they contribute more substantially to the representation. Thus, we choose the index set $\mathcal{S}$ for eigenfunctions to include the $|\mathcal{S}|$ eigenfunctions corresponding to the eigenvalues with smallest magnitudes.

\subsubsection{Computational Complexity of Solving for Score}\label{sec: solve LS 1d}

The key tasks in algorithm \ref{algorithm: main} is to solve the linear system to recover an approximation to the score function. Building the coefficients of $A$ and $b$ for time $t=0$ is straightforward through Monte Carlo integration which requires $O(N|\mathcal{S}'|)$ operations. To obtain the integrals at later times we do the expansion in~\eqref{eq:linear interpolation} and then compute $A(t)$ and vector $b(t)$ via \eqref{eq: compute A 1d} and \eqref{eq: compute b 1d}. For the two eigenfunction choices discussed above—Hermite polynomials and Fourier basis—the expansions in~\eqref{eq:linear interpolation} can hold exactly by using $2|\mathcal{S}|$ eigenfunctions with the smallest magnitudes for the expansion ($|\mathcal{S}'|=2|\mathcal{S}|$): both the product of two eigenfunctions and the derivative of an eigenfunction can be expressed as linear combinations of the basis functions. 
Moreover, since $V'(x) = \alpha x$ in the Hermite polynomial setting and $V'(x) = 0$ in the Fourier basis setting, there is no interpolation error introduced in representing the potential gradient in either case. Thus, computing the coefficients for the expansions in~\eqref{eq:linear interpolation} involves $O(|\mathcal{S}|^3)$ operations.  \par 
For each time step $t$, evaluating $A(t)$ and $\bvec{b}(t)$ as in~\eqref{eq: compute A 1d} and~\eqref{eq: compute b 1d} also requires $O(|\mathcal{S}|^3)$ operations. Solving the resulting linear system~\eqref{eq: linear system main high-d} (Line 7 of Algorithm~\ref{algorithm: main}) using standard methods likewise costs $O(|\mathcal{S}|^3)$. Assuming a total of $N_t$ time steps, the overall cost for these computations scales as $O(N_t |\mathcal{S}|^3)$. In summary, the total computational complexity of our method is $O(N|\mathcal{S}| + N_t |\mathcal{S}|^3)$. \par 
Notably, our approach requires only a single Monte Carlo simulation in Line 4 of Algorithm, which contrasts sharply with the standard Euler-Maruyama scheme. In the latter, the integrals in $A(t)$ and $\bvec{b}(t)$ are computed by samples $x_t$ generating at each time step \( t \). This results in a total computational cost of \( O(N N_t |\mathcal{S}|^2 + N_t |\mathcal{S}|^3) \), which can be significantly higher than that of our method, particularly in large-sample settings. \par

\subsection{Application of Main Algorithm: Multi-variable Distribution}\label{sec: high-d case}

The choice of the basis set and the numerical implementation as exemplified in the previous section does not readily extend to the higher dimensional setting without accounting for hidden structures that mitigate the curse of dimensionality. Especially choosing the set $\mathcal S$, i.e. fixing our basis, is essential and needs to be done carefully in order for it to be small enough to be computationally feasible but rich enough to retain enough expressive power to actually approximate the score function. 
To that end, we will assume that the density $\rho_0$ is a perturbation of the base distribution $\rho_\infty$.

\subsubsection{Aiming for the Perturbative Regime: Using Mean-field Approximations as Base Distribution}
In this subsection, we focus on the choice of base distribution in the high-dimensional setting. In general, we want to choose a distribution that is easy to sample from, e.g. seperable densities.  One can simply pick a Gaussian distribution with diagonal covariance matrix or a uniform distribution on a hypercube. We, however, propose a novel base distribution based on a mean-field approximation of the underlying data distribution, constructed directly from empirical samples.

While the true underlying density $\rho_0$ is typically not mean-field, we can construct a mean-field approximation of the form $\rho_{\mathrm{MF}}(x) = \prod_{i=1}^d \rho_i(x_i)$. This approximation enables efficient i.i.d. sampling. To determine each one-dimensional marginal $\rho_i(x_i)$, we match its first few moments to those of underlying density $\rho_0$. Specifically, for a fixed moment order $n_{\text{m}}$, we impose the constraint $
\mu_{i,j} = \int x_i^j \rho_0(\bvec{x})\, \mathrm{d}\bvec{x}$ for $0 \leq j \leq n_{\text{m}}$ where the right-hand side is estimated via Monte Carlo integration using samples drawn from $\rho_0$. Each marginal $\rho_i$ is then obtained by solving the following entropy maximization problem:
\begin{align}\label{eq: entropy maximization}
    &\max_{\rho_i} \int \rho_i(x_i) \log \rho_i(x_i)\, \mathrm{d}x_i \nonumber \\
    \text{subject to} \quad & \int x_i^j \rho_i(x_i)\, \mathrm{d}x_i = \mu_{i,j}, \quad \text{for } 0 \leq j \leq n_{ \text{m} }.
\end{align}

In order to solve the constrained optimization problem, we define the associated Lagrangian 

\begin{equation*}
L(\rho_i,\{\nu^{(i)}_j\}_j) := \int \rho_i(x_i)\log(\rho_i(x_i))\mathrm{d}x_i + \sum_{j=0}^{n_{ \text{m} }} \nu^{(i)}_j(\int x_i^j\rho_i(x_i)\mathrm{d}x_i-\mu_{i,j}).
\end{equation*}
The dual variables $\nu_j^{(i)}$ can be computed via standard numerical optimization approach \cite{boyd2004convex,wainwright2008graphical} and $\rho_i$ are obtained through these dual variables via the first order necessary conditions 
\begin{equation*}
  \frac{\delta L(\rho_i,\{\nu^{(i)}_j\}_j)}{\delta \rho_i} = 0 \implies \rho_i(x_i) = \exp(-\sum_{j=0}^{n_{ \text{m} }} \nu^{(i)}_j x_i^j - 1). 
\end{equation*}

After getting $\{\rho_i\}^d_{i=1}$ that defines $\rho_{\mathrm{MF}}$, we let $\rho_\infty = \rho_{\mathrm{MF}}$, which serves as the high-dimensional base distribution.

\subsubsection{Going beyond Smoothness: Using Clusters of Eigenfunction}\label{sec: cluster truncation}
For our proposed method, the expansion of the score function in terms of eigenfunctions associated to the base distribution is crucial. For non-standard, seperable Gibbs measures, we need to approximate the eigenfunctions numerically as follows. Let $\rho_\infty(x) \propto \exp(- V(x))=\exp(-\sum^d_{i=1} V_i(x_i))$, where each $V_i$ is a single variable function. In this case, the high-dimensional backward Kolmogorov operator can be decomposed as the sum of one-dimensional operators

\begin{align}
    \mathcal{L} &= -\nabla V(\bvec{x})\cdot\nabla + \beta^{-1}\Delta = 
    \sum_{i=1}^d(-\frac{\partial V(\bvec{x})}{\partial x_i} \frac{\partial}{\partial x_i} + \beta^{-1}\frac{\partial^2}{\partial x_i^2}) \nonumber\\
    &  = 
    \sum_{i=1}^d(-\frac{\partial V_i(\bvec{x})}{\partial x_i} \frac{\partial}{\partial x_i} + \beta^{-1}\frac{\partial^2}{\partial x_i^2}) := \sum_{i=1}^d \mathcal{L}_i.
\end{align}

Let $\lambda^{(i)}_{n_i}$ and $\phi^{(i)}_{n_i}(x_i)$ denote the eigenvalues and eigenfunctions of the one-dimensional operator $\mathcal{L}_i$. Then, the eigenvalues and eigenfunctions of the full high-dimensional operator $\mathcal{L}$ are given by \ $\sum_{i=1}^d \lambda^{(i)}_{n_i}$ and $\prod_{i=1}^d \phi^{(i)}_{n_i}(x_i)$, respectively. These eigenfunctions can be constructed by solving individual 1D eigenvalue equation with a finite difference discretization.  

For the next step, we demonstrate how to select a subset of eigenfunctions for score representation. In the single-variable case, we select the  eigenfunctions with small eigenvalues, which typically gives rise to smooth approximations. However, one cannot directly extend this strategy of choosing eigenvalues lesser than a certain threshold, directly to the high-dimensional setting. The reason is that such a choice corresponds to selecting the interior of a ball, with a weighted norm induced by the eigenvalues, in $\mathbb Z^d$ with a volume that scales exponentially in $d$. Therefore, it is crucial to make a well-informed choice of subsets of these balls by adopting different strategies to select the eigenfunctions. We, thus, consider a structured subset of eigenfunctions known as the \emph{$j$-cluster} basis \cite{chen2023combining,peng2023generative}, which consists of all $j$-variable basis functions:

\begin{equation}\label{k-cluster basis}
\mathcal{B}_{j,d} = \left\{ \phi^{(i_1)}_{n_{i_1}}(x_{i_1}) \cdots \phi^{(i_j)}_{n_{i_j}}(x_{i_j}) \;\middle|\; 1 \leq i_1 < \cdots < i_j \leq d,\; 1 \leq n_{i_1}, \dots, n_{i_j} \leq n \right\},
\end{equation}
where $n$ represents the number of one-dimensional eigenfunctions for each dimension. Each eigenfunction in $\mathcal{B}_{j,d}$ involves at most a product of $j$ one-dimensional eigenfunctions. This approach can reduce the total number of basis functions from $n^d$ to ${d \choose j} n^j$, offering substantial computational savings while preserving expressive power.

To further reduce the computational complexity, we introduce two assumptions on the underlying density $\rho_0$: 

\begin{itemize}
    \item We assume that a 2-cluster basis is sufficient to represent the score function. This assumption seems plausible, if the target density $\rho_0$ takes a high-dimensional Gibbs measure of the form $\exp(-V(x))$, where $V(x)$ is at most a 2-cluster function. 
    \item Additionally, we assume that it suffices to use a sparse set of 2-cluster basis to represent the score function. This is possible, if the potential $V(x) = \sum_{(i,j)} V_{ij}(x_i,x_j)$ for the target Gibbs measure only couples a sparse set of $(i,j)$ pair. This type of potentials are quite common in physics application where there is near-sightedness. 
\end{itemize}
In Example 3 in Appendix~\ref{appendix: example} we demonstrate how such structures transfer from the potential to the score function in the perturbative regime. These structural assumptions allow us to expand the score function using a local 2-cluster basis, where basis functions are restricted to interactions between nearby coordinates:
\begin{equation}\label{eq: 2-cluster local basis}
\mathcal{B}^{\text{local}}_{2,d} = \left\{ \phi^{(i)}_{n_i}(x_i)\, \phi^{(i')}_{n_{i'}}(x_{i'}) \;\middle|\; 1 \leq i < i' \leq d,\; i' \leq i + d_\text{b},\; 1 \leq n_i, n_{i'} \leq n \right\},
\end{equation}
where $d_\text{b}$ is a dimension-independent bandwidth parameter that controls the locality of interactions. Under this formulation, when the score function lies within $\mathcal{B}^{\text{local}}_{2,d}$, the number of coefficients in $\bvec{c}^{(i)}(t)$ is reduced to at most $|\mathcal{S}|\leq d_\text{b} d n^2$, resulting in a representation whose complexity grows linearly with the dimension $d$.

\subsubsection{Using Thresholding and Sketching to Efficiently Solve for the Score in High Dimensions}\label{sec: fast numerical trick}
Let us now review the computational complexity of solving the linear system exploiting the local 2-cluster basis  $\mathcal{B}^{\text{local}}_{2,d}$ in \eqref{eq: 2-cluster local basis}. We summarize the key components of the method and focus on fast numerical techniques, detailed computational steps for \eqref{eq: compute A 1d} and \eqref{eq: compute b 1d} are provided in Appendix~\ref{appendix: coefficient computation high-d}.

In the initial step (Line 4 of Algorithm~\ref{algorithm: main}), we compute integrals of each pair of basis functions in $\mathcal{B}^{\text{local}}_{2,d}$ against $\rho_0(x)$ via Monte Carlo integration using its samples $\{\bvec{x}^{(j)}\}_{j=1}^N$. This step incurs a computational cost of $O(Nd_{\text{b}}^2d^2n^4)$. Next, we compute the coefficients for the expansions in~\eqref{eq:linear interpolation}, which serve as a preparatory step for efficiently evaluating $A(t)$ and $b(t)$ in~\eqref{eq: A and b main}. We use the same choice of $\mathcal{S}'$ as in Section~\ref{sec: solve LS 1d} for the expansions. This step requires $O(d_{\text{b}}dn^3)$ operations. For each time step $t$, the computation of $A(t)$ and $B(t)$ in~\eqref{eq: A and b main} costs $O(d_{\text{b}}^2d^2n^4 + d_{\text{b}}d^2n^3) = O(d_{\text{b}}^2d^2n^4)$. Solving the linear system~\eqref{eq: linear system main high-d} using a standard solver requires $O(d_{\text{b}}^3d^3n^6)$ operations. Therefore, the overall computational complexity of the method is 
$O(Nd_{\text{b}}^2d^2n^4 + N_td_{\text{b}}^3d^3n^6)$
where $N_t$ denotes the total number of time steps.

Moreover, in the high-dimensional settings, the matrix $A(t)$ in~\eqref{eq: A and b main} is often ill-conditioned, and directly inverting it can result in an unstable score estimator. To address this issue, a common and effective strategy is to apply singular value thresholding: we specify a threshold and discard all singular values of $A(t)$ that fall below it, then compute the pseudoinverse using the remaining components. An alternative approach is to incorporate a regularization term, such as $\ell_2$ regularization, when solving the linear system~\eqref{eq: linear system main high-d}. In our experiments, we observe that $A(t)$ frequently exhibits a low-rank structure, and thus we adopt the thresholding approach in our method. \par

One dominant term in the computational complexity described above $O(N_t d_{\text{b}}^3d^3n^6)$ arises from solving the linear system~\eqref{eq: linear system main high-d}. Fortunately, by exploiting the low-rank structure of $A(t)$ discussed above, this cost can be significantly reduced using randomized low-rank approximation techniques~\cite{liberty2007randomized, halko2011finding}. The key idea is that a low-rank approximation can be constructed by accurately capturing the dominant left singular subspace of the matrix, without explicitly reconstructing its full column space. Specifically, we can generate a random Gaussian matrix $\Omega \in \mathbb{R}^{|\mathcal{S}| \times \tilde{r}_A}$ with $\tilde{r}_A \ll |\mathcal{S}|$, typically independent of the problem dimension, and we then estimate its left singular subspace using the reduced matrix $A(t)\Omega$:
$$
[U(t), \Sigma(t), V(t)] = \text{SVD}(A(t)\Omega, r_A),
$$
where $\text{SVD}(A(t)\Omega, r_A)$ denotes a truncated SVD with rank $r_A$, and $U(t) \in \mathbb{R}^{|\mathcal{S}| \times r_A}$ is the resulting orthonormal basis. The rank $r_A$ is typically smaller than $\tilde{r}_A$ and can be adaptive to the problem. The matrix $U(t)U(t)^T A(t)$ then serves as a low-rank approximation to $A(t)$, computed at a cost of $O(|\mathcal{S}|^2 \tilde{r}_A)$. The corresponding linear system can be approximated by projecting onto this low-rank subspace:
\begin{equation}\label{eq: fast numerical}
    U(t)U(t)^T A(t) C(t) = -B(t) \quad \Rightarrow \quad \left(U(t)^T A(t)\right) C(t) = -U(t)^T B(t),
\end{equation}
where $U(t)^T A(t)$ is a matrix of size $r_A \times |\mathcal{S}|$, significantly smaller than the original system. Solving this reduced system requires only 
$O(|\mathcal{S}|^2 \tilde{r}_A + |\mathcal{S}|^2 r_A + |\mathcal{S}| d r_A + |\mathcal{S}| r_A^2) = O(|\mathcal{S}|^2 \tilde{r}_A) $ operations, in contrast to the classical $O(|\mathcal{S}|^3)$ cost for direct inversion.

By incorporating this fast numerical technique, the overall computational complexity is reduced to
\begin{equation}
   O(N d_{\text{b}}^2 d^2 n^4 + N_t d_{\text{b}}^2 d^2 n^6),
\end{equation}
with assuming a sketch size $\tilde{r}_A = O(n^2)$, which depends only on $n$. In contrast, for conventional Euler-Maruyama simulation, the dominant computational cost is $ O(N N_t d_{\text{b}}^2 d^2 n^4)$, which is significantly higher in large-sample regimes.

\section{Error Estimation in the Perturbative Regime}\label{sec: theory}
In this section, we show that if the underlying density is a perturbation of a mean-field Gibbs measure, the approximation error can be bounded in terms of the perturbation scale. We begin by formally stating and explaining the assumptions used in the analysis.

Let $\rho_0$ denote the underlying density from which we wish to sample and assume it takes the following form:
\begin{equation}
  \rho_0(\bvec{x}) = C_\rho\, \rho_\infty(\bvec{x}) \left(1 + \delta \sum_{i=1}^\infty p_i f_i(\bvec{x})\right),
\end{equation}
where $f_i(\bvec{x})$ are the eigenfunctions associated with the base distribution $\rho_\infty$, $p_i$ are the corresponding coefficients, and $\delta$ controls the perturbation scale. The constant $C_\rho$ ensures proper normalization. The second term in the formula represents the deviation of $\rho_0$ from $\rho_\infty$. 
In the following, we state the assumptions placed on the parameters $p_i$ and $\delta$ to facilitate our analysis.

 \begin{assumption}\label{ass:measure}
     We assume the base distribution takes the form  $\rho_\infty(\bvec{x})= \frac{1}{Z}e^{-V(\bvec{x})}$
     for some analytic, coercive potential $V$ such that the eigenvalues $\{\lambda_i\}$ of the associated operator $\mathcal{L}$ satisfy $0=\lambda_0>\lambda_1\geq\lambda_2\geq \dots $. Let $p_i$ satisfy
     \begin{equation}\label{eq: assumption pi}
         \sup_{\bvec{x}\in\mathbb R^d}| \sum_{i=1}^\infty p_i f_i(\bvec{x})|\leq 1,
     \end{equation}
     and assume that there is a non-decreasing function $r:\mathbb Z_+\to\mathbb R_+$ with $\lim_{M\to\infty} r(M)=0$ such that \begin{equation}\label{eq: assumption r}
         \int_{\mathbb R^d} \left\|\nabla\left(\sum_{i=M}^\infty p_i f_i(\bvec{x})\right)\right\|^2\rho_0(\bvec{x})\mathrm{d}\bvec{x}\leq r(M).
     \end{equation}
\end{assumption}
To justify the assumption, we provide examples related to the Ornstein-Uhlenbeck process and the periodic Brownian motion respectively, detailed in Appendix~\ref{appendix: example}.  \par

These assumptions allow us to analyze the score estimation error in the perturbative regime. Specifically, we will demonstrate how the choice of a finite-dimensional sub-eigensystem will affect the accuracy of the estimated score function. To this end, we first decompose the total error into two components: an approximation error and a statistical error. 
\begin{theorem}
    Let $F_M = \{f_1,\dots,f_M\}$ be our hypothesis class and let $\highds_M$ be minimizing 
    \begin{equation}
        \int_0^\infty \int_{\mathbb R^d}  \left(\frac{1}{2}\|\highds_{M}(t,\bvec{x})\|^2 +\mathrm{div}(\highds_{M}(t,\bvec{x}))\right)\rho_t(\bvec{x})\mathrm{d}\bvec{x}\mathrm{d}t
    \end{equation} over $F_M$. Now let $\{\bvec{x}^{(j)}\}_{j=1}^N$ be $N$ i.i.d. samples drawn from $\rho_0$. 
    Assume that for every $\mathrm{err}_{\mathrm{stat}}>0$ with high probability $\mathbb P=\mathbb P(N,\mathrm{err}_{\mathrm{stat}},F_M)$, depending on $N$, $\mathrm{err}_{\mathrm{stat}}$ and the (Rademacher) complexity of the model class $F_M$, it holds \\
    \begin{align*}
        &\left| \int_0^\infty \int_{\mathbb R^d}  \left(\frac{1}{2}\|\highds(t,\bvec{x})\|^2 +\mathrm{div}(\highds(t,\bvec{x}))\right)\rho_t(\bvec{x})\mathrm{d}\bvec{x}\mathrm{d}t\right.\\
        &\left.-\frac 1 N\sum_{j=1}^N \int_0^\infty \frac{1}{2} \|\highds(t,\bvec{x}^{(j)}(t))\|^2 + \mathrm{div}(\highds(t,\bvec{x}^{(j)}(t)))\mathrm{d}t\right|\leq \mathrm{err}_{\mathrm{stat}}
    \end{align*}
    for all $\highds\in F_M$.
    Let $\{\bvec{x}^{(j)}(t)\}_{j=1}^N$ be the trajectories of $\{\bvec{x}^{(j)}\}_{j=1}^N$ under the overdamped Langevin dynamics induced by the base distribution. Suppose $\highds(t,\bvec{x})= \nabla \log \rho_t(\bvec{x})$ is the ground truth score function and $\highds_{M,N}$ be the minimizer of 
    
\begin{equation*}
    \frac 1 N\sum_{j=1}^N \int_0^\infty \frac{1}{2} \|\highds_{M,N}(t,\bvec{x}^{(j)}(t))\|^2 + \mathrm{div}(\highds_{M,N}(t,\bvec{x}^{(j)}(t)))\mathrm{d}t.
\end{equation*}

\noindent Then we have with probability $\mathbb P(N,\mathrm{err}_{\mathrm{stat}},F_M)$ that
\begin{align*}
    &\left|\int_0^\infty \int_{\mathbb R^d}  \left(\frac{1}{2}\|\highds_{M,N}(t,\bvec{x})\|^2-\frac{1}{2}\|\highds(t,\bvec{x})\|^2 +\mathrm{div}[\highds_{M,N}(t,\bvec{x})-\highds(t,\bvec{x})]\right)\rho_t(\bvec{x})\mathrm{d}\bvec{x}\ \mathrm{d}t\right|\\
    &\leq  2\mathrm{err}_{\mathrm{stat}} + \mathrm{err}_{\mathrm{appr}},
\end{align*}
where
    
    \begin{align*}
        \mathrm{err}_{\mathrm{appr}} & =|\int_0^\infty \int_{\mathbb R^d}  \left(\frac{1}{2}\|s_{M}(t,x)\|^2-\frac{1}{2}\|s(t,x)\|^2 +\mathrm{div}(\highds_{M}(t,x)-s(t,x)\right)\rho_t(x)\mathrm{d}x\mathrm{d}t| \\
        & = \int_0^\infty \int_{\mathbb R^d}  \frac{1}{2}\|\highds_{M}(t,x) - \nabla \log \rho_t(x)\|^2 \rho_t(x) \mathrm{d}x \mathrm{d}t.
    \end{align*}
\end{theorem}

The proof follows directly from the triangle inequality and integration by parts. In this paper, our primary focus is on analyzing the approximation error introduced by considered eigenfunction subspaces. While the statistical error is typically bounded by the Monte Carlo rate with respect to the sample size $N$, a detailed investigation of this component is left for future work. \par 

The following theorem provides an approximation error analysis based on the perturbation assumption~\ref{ass:measure}.

\begin{theorem}\label{thm:General}
    Let $M\in\mathbb N$. Let $f_{1},\dots,f_M$ be the eigenfunctions of $\rho_\infty$ and Assumption \ref{ass:measure} be fulfilled. Assume that the potential $V\in F_M$.  Then the following bound holds 

    \begin{equation}
        \min_{\highds_M\in L^2([0,\infty),F_M)}\int_0^\infty \int_{\mathbb R^d} \frac{1}{2}\|\nabla\log\rho_t(x) - \highds_M(t,x)\|^2\rho_t(x) \mathrm{d}x\ \mathrm{d}t \leq \frac{\delta^2}{|\lambda_1|}(r(M+1)+(\frac\delta{1-\delta})^2r(1)).
    \end{equation}
   
\end{theorem}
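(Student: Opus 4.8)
The plan is to reduce everything to the closed‑form eigenexpansion of $\rho_t$. Since $\rho_\infty$ is reversible for the base dynamics, the Fokker--Planck operator satisfies $\mathcal L^*(\rho_\infty f_i)=\rho_\infty\,\mathcal L f_i=\lambda_i\rho_\infty f_i$, so it acts diagonally on the eigenexpansion of $\rho_0$; using that $\int \rho_\infty f_i\,d\bvec x=0$ for $i\ge1$ forces $C_\rho=1$, one gets
\[
  \rho_t(\bvec x)=\rho_\infty(\bvec x)\bigl(1+\delta\,g_t(\bvec x)\bigr),\qquad g_t:=\sum_{i\ge1}p_i e^{\lambda_i t}f_i=e^{\mathcal L t}g_0,\quad g_0:=\sum_{i\ge1}p_i f_i .
\]
Because $e^{\mathcal L t}$ is a conservative Markov semigroup it is an $L^\infty$‑contraction, so $\|g_t\|_\infty\le\|g_0\|_\infty\le1$ by \eqref{eq: assumption pi}; hence $1-\delta\le\rho_t/\rho_\infty\le1+\delta$, $\rho_t>0$, and $\nabla\log\rho_t=-\nabla V+\delta\nabla g_t/(1+\delta g_t)$.

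Next, rather than computing the $L^2(\rho_t)$‑projection onto $F_M$ exactly, I would bound the minimum by plugging in the explicit competitor $\highds_M(t,\cdot):=-\nabla V+\delta\,\nabla g_t^{\le M}$ with $g_t^{\le M}:=\sum_{i=1}^M p_i e^{\lambda_i t}f_i$. It is admissible because $V\in F_M$ reproduces the stationary part $-\nabla V$ exactly (this is essential: the error is integrated over all $t\in[0,\infty)$ while $\nabla\log\rho_t\to-\nabla V$), and the coefficients $\delta p_i e^{\lambda_i t}$ are square‑integrable in $t$. Writing $g_t^{>M}:=g_t-g_t^{\le M}$ and $1/(1+\delta g_t)=1-\delta g_t/(1+\delta g_t)$,
\[
  \nabla\log\rho_t-\highds_M=\delta\,\nabla g_t^{>M}-\frac{\delta^2 g_t\,\nabla g_t}{1+\delta g_t},
\]
so, using $|g_t|\le1$ and $|1+\delta g_t|\ge1-\delta$, $\tfrac12\|\nabla\log\rho_t-\highds_M\|^2\le\delta^2\|\nabla g_t^{>M}\|^2+\tfrac{\delta^4}{(1-\delta)^2}\|\nabla g_t\|^2$. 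The first summand is the truncation error; the second is the genuinely nonlinear $O(\delta^2)$ piece produced by the denominator.

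It then remains to integrate these two terms against $\rho_t\,d\bvec x\,dt$, and the key device is the Dirichlet‑form identity for the reversible generator: for $\phi_t=\sum_{i\ge1}q_i e^{\lambda_i t}f_i$ (with the $f_i$ orthonormal in $L^2(\rho_\infty)$) one has $\int\|\nabla\phi_t\|^2\rho_\infty\,d\bvec x=\sum_i|\lambda_i|q_i^2 e^{2\lambda_i t}$, and since $\int_0^\infty|\lambda_i|e^{2\lambda_i t}\,dt=\tfrac12\le|\lambda_i|/(2|\lambda_1|)$,
\[
  \int_0^\infty\!\!\int_{\mathbb R^d}\|\nabla\phi_t\|^2\rho_\infty\,d\bvec x\,dt\ \le\ \frac1{2|\lambda_1|}\int_{\mathbb R^d}\|\nabla\phi_0\|^2\rho_\infty\,d\bvec x .
\]
Applying this with $\phi_t=g_t^{>M}$ and with $\phi_t=g_t$, passing from the $\rho_\infty$‑weight to the data weight via $\rho_\infty\le\rho_0/(1-\delta)$, invoking \eqref{eq: assumption r} (so that the right‑hand sides become $r(M+1)$ and $r(1)$), and bounding $\rho_t\le(1+\delta)\rho_\infty$ in the error integral yields
\[
  \int_0^\infty\!\!\int_{\mathbb R^d}\tfrac12\|\nabla\log\rho_t-\highds_M\|^2\rho_t\,d\bvec x\,dt\ \le\ \frac{(1+\delta)\,\delta^2}{2(1-\delta)\,|\lambda_1|}\Bigl(r(M+1)+\bigl(\tfrac{\delta}{1-\delta}\bigr)^{2}r(1)\Bigr),
\]
which gives the claimed bound once one notes $(1+\delta)/(2(1-\delta))\le1$ in the perturbative regime.

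The main obstacle is conceptual rather than computational: although $\rho_t$ has an exact diagonal eigenexpansion, the score $\nabla\log\rho_t$ does \emph{not} lie in any finite eigen‑span because of the $\log$ and the $1+\delta g_t$ denominator, so truncation alone cannot work; the perturbative bookkeeping above is exactly what separates the leading linear‑in‑$\delta$ piece $\delta\nabla g_t$ (a clean eigen‑combination) plus the exactly representable $-\nabla V$ from the $O(\delta^2)$ remainder, and one must check that this remainder is still controlled by the gradient‑tail quantity $r(1)$ through the pointwise bound $|g_t|\le1$. The only place the $1/|\lambda_1|$ factor is genuinely won is the time integral, which has to be treated through the reversibility/Dirichlet‑form identity together with $\int_0^\infty|\lambda_i|e^{2\lambda_i t}\,dt=\tfrac12$ rather than by any cruder pointwise‑in‑$t$ estimate; after that, the remaining care is simply tracking the three measures $\rho_0,\rho_t,\rho_\infty$ through $1-\delta\le\rho_t/\rho_\infty\le1+\delta$.
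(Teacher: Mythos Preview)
Your proof is correct and structurally mirrors the paper's: same eigen\-expansion $\rho_t=\rho_\infty(1+\delta g_t)$, same competitor $\highds_M=-\nabla V+\delta\nabla g_t^{\le M}$, and the same splitting of the error into a tail term $\delta\nabla g_t^{>M}$ and a nonlinear $O(\delta^2)$ remainder (the paper writes the latter via the geometric series $\sum_{k\ge1}(-\delta g_t)^k$, you via $1/(1+\delta g_t)=1-\delta g_t/(1+\delta g_t)$; both yield the same pointwise bound $\delta/(1-\delta)$). Where you differ is in extracting the time decay. The paper asserts the pointwise-in-$x$ inequality $\bigl\|\sum_i p_i e^{\lambda_i t}\nabla f_i(x)\bigr\|\le e^{\lambda_1 t}\bigl\|\sum_i p_i\nabla f_i(x)\bigr\|$ and then integrates $\int_0^\infty e^{2\lambda_1 t}\,dt=1/(2|\lambda_1|)$ directly against $\rho_0$; you instead integrate against $\rho_\infty$ first and use the Dirichlet-form identity $\int\|\nabla\phi\|^2\rho_\infty=\sum_i|\lambda_i|q_i^2$ together with $\int_0^\infty|\lambda_i|e^{2\lambda_i t}\,dt=\tfrac12$, and likewise obtain $|g_t|\le1$ from the $L^\infty$-contraction of the Markov semigroup rather than from a pointwise series comparison. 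Your route is arguably cleaner, since the semigroup and Dirichlet arguments are standard and sidestep any pointwise claims; the only price is the explicit measure-change factor $(1+\delta)/(2(1-\delta))$ from passing $\rho_t\to\rho_\infty\to\rho_0$, which as you note is $\le1$ exactly when $\delta\le1/3$.
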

\begin{proof}
Using eigenfunctions $\{f_i\}$ and Assumption \ref{ass:measure}, we have the following representation for time-dependent density $\rho_t(x)$:
\begin{align*}
   & \rho_t(x)=e^{\mathcal{L^*}t}\rho_0(x) = C_\rho e^{\mathcal{L^*}t}\left(\rho_\infty \left(1+\delta\sum_{i=1}^\infty p_i f_i(x)\right) \right) \nonumber\\
   & = C_\rho \rho_\infty \left(1+\delta\sum_{i=1}^\infty p_i e^{\lambda_i t} f_i(x)\right)= \frac{C_\rho }{Z}e^{-V(x)}\left(1+\delta\sum_{i=1}^\infty p_i e^{\lambda_i t} f_i(x)\right)
\end{align*}
Therefore, 
\begin{align*}
    \nabla \log(\rho_t(x))  =& -\nabla V(x) + \nabla  \log(1+\delta\sum_{i=1}^\infty p_ie^{\lambda_i t} f_i(x))\nonumber\\
    =& -\nabla V(x) + \frac{  \delta\sum_{i=1}^\infty p_ie^{\lambda_i t} \nabla f_i(x)}{1+\delta\sum_{i=1}^\infty p_ie^{\lambda_i t} f_i(x)}.
\end{align*}

Recall that $\frac 1{1+y} = \sum_{k=0}^\infty (-1)^k y^k$ for $|y|<1$. With Assumption \ref{ass:measure}, we get
$$
|\sum_{i=1}^\infty p_ie^{\lambda_i t} f_i(x)| \leq |\sum_{i=1}^\infty p_i f_i(x)| \leq 1
$$

Therefore, we can expand the second term: 
\begin{align*}
    \nabla\log \rho_t(x) & = -\nabla V(x)+\delta \sum_{i=1}^\infty p_ie^{\lambda_i t} \nabla f_i(x) \nonumber\\
    &\quad+\left(\delta \sum_{i=1}^\infty p_ie^{\lambda_i t} \nabla f_i(x)\right)\sum_{k=1}^{\infty} (-1)^k\left(\delta\sum_{i=1}^\infty p_ie^{\lambda_i t} f_i(x) \right)^k
\end{align*}
Thus by choosing 
\begin{equation*}
    \highds_M(t,x) = -\nabla V(x)+\delta \sum_{i=1}^M p_ie^{\lambda_i t} \nabla f_i(x) \in L^2([0,\infty),F_M), 
\end{equation*}
we have 
\begin{align*}
    &\|\nabla\log \rho_t(x) - \highds_M(t,x)\|^2 \nonumber\\
    = & \left\|\delta \sum_{i=M+1}^\infty p_ie^{\lambda_i t} \nabla f_i(x)+ \left(\delta \sum_{i=1}^\infty p_ie^{\lambda_i t} \nabla f_i(x)\right)\sum_{k=1}^{\infty} (-1)^k\left(\delta\sum_{i=1}^\infty p_ie^{\lambda_i t} f_i(x) \right)^k\right\|^2 \nonumber\\
    \leq &2\delta^2\left[\left\|\sum_{i=M+1}^\infty p_ie^{\lambda_i t} \nabla f_i(x)\right\|^2+\left\|\sum_{i=1}^\infty p_ie^{\lambda_i t} \nabla f_i(x)\sum_{k=1}^{\infty} (-1)^k\left(\delta\sum_{i=1}^\infty p_ie^{\lambda_i t} f_i(x) \right)^k\right\|^2\right] \nonumber\\
    \leq &2\delta^2\left[\left\|\sum_{i=M+1}^\infty p_ie^{\lambda_i t} \nabla f_i(x)\right\|^2+\left\|\sum_{i=1}^\infty p_ie^{\lambda_i t} \nabla f_i(x)\sum_{k=1}^{\infty} \delta ^k\right\|^2\right] \nonumber\\
        \leq &2\delta^2e^{2\lambda_1 t}\left[\left\|\sum_{i=M+1}^\infty p_i \nabla f_i(x)\right\|^2+\frac{\delta^2}{(1-\delta)^2}\left\|\sum_{i=1}^\infty p_i\nabla f_i(x)\right\|^2\right].
\end{align*}
The first inequality is from Cauchy inequality. For the last inequality, all eigenvalues $\lambda_i$ are non-positive, so $e^{\lambda_1 t}$ is the largest coefficient among $\{e^{\lambda_i t}\}_{i=1}^\infty$. 
Therefore,
\begin{align*}
    &\int_0^\infty \int \|\nabla\log \rho_t(x)  - \highds_M(t,x)\|^2\rho_t(x) \mathrm{d}x\mathrm{d}t \nonumber\\
    & \leq\frac{\delta^2}{|\lambda_1|}\left(\int \|\nabla \sum_{i=M+1}^\infty p_if_i(x)\|^2\rho_0(x) \mathrm{d}x + (\frac{\delta}{1-\delta})^2\int\|\nabla \sum_{i=1}^\infty p_i f_i(x)\|^2\rho_0(x)\mathrm{d}x\right) \nonumber\\
    & \leq \frac{\delta^2}{|\lambda_1|}(r(M+1) + (\frac{\delta}{1-\delta})^2 r(1))
\end{align*}
where we use the fact $\int_0^\infty e^{2\lambda_1 t} \mathrm{d}t = \frac{1}{-2\lambda_1}=\frac{1}{2|\lambda_1|}$. 
\end{proof}

This result shows that the approximation error is closely tied to the perturbation scale $\delta$: the error vanishes as $\delta \to 0$. However, even as the number of basis functions $M$ tends to infinity, a small residual term in the approximation error may still persist. A more refined analysis—leveraging certain algebraic structures of the basis—can strengthen this result. We will illustrate this in the context of one-dimensional periodic Brownian motion.

\begin{theorem}\label{thm:Fourier}
    Let $f_i(x) = e^{\sqrt{-1}\omega i\, x}$ and $\lambda_i$ as the corresponding eigenvalue, $i\in \mathbb{Z}$.
     Assume coefficient $|p_i|\leq \gamma^i$ for some $\gamma<1$. Then for any $\varepsilon>0$, there is $M$ and a $s_{2M}\in \mathrm{Span}\{f_{-2M},\dots,f_{2M}\}$ 
     such that
    
    \begin{equation}
        \int_0^\infty \int (\partial_x \log \rho_t(x) - s_{2M}(t,x))^2\rho_t(x) \mathrm{d}x \mathrm{d}t\leq\frac{\varepsilon}{|\lambda_1|}
    \end{equation}
\end{theorem}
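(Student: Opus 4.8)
The plan is to exploit two structural facts special to the periodic case: the Fourier basis $f_i(x)=e^{\sqrt{-1}\omega i x}$ is closed under differentiation ($\partial_x f_i=\sqrt{-1}\omega i\,f_i$) and under multiplication ($f_i f_j=f_{i+j}$), and the potential is constant, so $\nabla V\equiv 0$. Write $\rho_t(x)=C_\rho\rho_\infty\,g_t(x)$ with $g_t(x)=1+P_t(x)$ and $P_t:=\delta\sum_{i\neq 0}p_i e^{\lambda_i t}f_i$ the time-evolved perturbation, so that the ground-truth score is simply $\partial_x\log\rho_t=g_t'/g_t=\partial_x\log(1+P_t)$. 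I would first record that, because $|p_i|\le\gamma^{|i|}$ with $\gamma<1$, each $P_t$ lies — uniformly in $t\ge 0$ — in the weighted Wiener algebra $\mathcal{A}_w:=\{\sum_n a_n f_n:\|\sum_n a_n f_n\|_w:=\sum_n|a_n|w^{-|n|}<\infty\}$ for any fixed $w\in(\gamma,1)$ (e.g.\ $w=\sqrt{\gamma}$); $\mathcal{A}_w$ is a Banach algebra under pointwise multiplication since $w<1$, and because the perturbation only involves modes $i\neq 0$, each carrying $e^{\lambda_i t}\le e^{\lambda_1 t}$ with $\lambda_1=\lambda_{-1}=\max_{i\neq 0}\lambda_i<0$, one gets $\|P_t\|_w\le \theta\,e^{\lambda_1 t}$ where $\theta:=\delta\sum_{i\neq 0}(\gamma/w)^{|i|}$. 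We work in the perturbative regime, where $\theta<1$.

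Next I would expand $\partial_x\log(1+P_t)=P_t'\sum_{k\ge 0}(-1)^k P_t^k$, which converges in $\mathcal{A}_w$ for every $t\ge 0$ since $\|P_t\|_w\le\theta<1$, and submultiplicativity gives $\|P_t' P_t^k\|_w\le\|P_t'\|_w\,\theta^k$ with $\|P_t'\|_w$ finite and again $\lesssim e^{\lambda_1 t}$. The crucial point is that $P_t'$ — and every additional factor $P_t$ — carries an $e^{\lambda_1 t}$, so $h_t:=\partial_x\log\rho_t$ satisfies $\|h_t\|_w\le C e^{\lambda_1 t}$ uniformly in $t$; equivalently, its Fourier coefficients obey $|\hat h_n(t)|\le C e^{\lambda_1 t}w^{|n|}$. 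Setting $s_{2M}(t,x):=\sum_{|n|\le 2M}\hat h_n(t)f_n(x)\in\mathrm{Span}\{f_{-2M},\dots,f_{2M}\}$, which is real-valued since $\hat h_{-n}=\overline{\hat h_n}$, Parseval over the period together with the uniform bound $\|\rho_t\|_\infty\le C'$ yields
\begin{equation*}
\int\big(\partial_x\log\rho_t(x)-s_{2M}(t,x)\big)^2\rho_t(x)\,dx\;\le\;C''\!\!\sum_{|n|>2M}|\hat h_n(t)|^2\;\le\;C'''\,e^{2\lambda_1 t}\,w^{4M}.
\end{equation*}
Integrating over $t$ and using $\int_0^\infty e^{2\lambda_1 t}\,dt=\tfrac{1}{2|\lambda_1|}$ bounds the quantity in the theorem by $\tfrac{C'''}{2|\lambda_1|}w^{4M}$, so choosing $M$ with $C'''w^{4M}\le 2\varepsilon$ gives the claim with exactly the stated $\varepsilon/|\lambda_1|$ scaling.

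The main obstacle — and the step where the periodic structure genuinely buys something over Theorem~\ref{thm:General} — is controlling the propagation of the geometric decay rate through the infinitely many products and the infinite Neumann sum; doing this by repeatedly convolving geometric sequences by hand is awkward, so I would work throughout in the Banach-algebra norm $\|\cdot\|_w$, which makes $\|P_t^k\|_w\le\|P_t\|_w^k$ automatic and, since $w<1$, still leaves a clean geometric tail bound $\sum_{|n|>2M}w^{2|n|}\lesssim w^{4M}$. A secondary technical point is keeping the denominator $g_t=1+P_t$ uniformly bounded away from $0$ for all $t\in[0,\infty)$, which again follows from $\|P_t\|_w\le\theta e^{\lambda_1 t}\le\theta<1$, and is precisely what confines the statement to the perturbative regime. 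Conceptually, the gain over Theorem~\ref{thm:General} is that the higher-order terms of the Neumann expansion — there dumped into the irreducible residual $(\tfrac{\delta}{1-\delta})^2 r(1)$ — can here be \emph{approximated} by a trigonometric polynomial because the Fourier basis is closed under products, so the whole error, not just its truncated first-order part, can be driven below any prescribed $\varepsilon$.
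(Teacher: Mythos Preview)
Your proposal is correct and takes a genuinely different route from the paper's proof. The paper proceeds via an auxiliary lemma (their Lemma~\ref{lem:ApproInx}) that approximates the reciprocal $\frac{1}{1+\delta\sum_i p_ie^{\lambda_i t}f_i}$ by a trigonometric polynomial $1+g$ of degree $M$: it truncates the Neumann series at some order $N_1$, then expands each power $(\sum_{|i|\le N_2}\cdots)^k$ multinomially and bounds the high-frequency tails by explicit combinatorial estimates. The final approximant $s_{2M}$ is then the \emph{product} of the truncated numerator $\delta\sum_{|i|\le M}p_ie^{\lambda_it}f_i'$ with $(1+g)$, which is why the degree doubles to $2M$. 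By contrast, you bypass this two-stage construction entirely: working in the weighted Wiener algebra $\mathcal{A}_w$ lets you push the geometric decay $w^{|n|}$ through the whole Neumann series in one stroke via submultiplicativity, obtaining directly that the Fourier coefficients of the \emph{full} score satisfy $|\hat h_n(t)|\le Ce^{\lambda_1 t}w^{|n|}$, after which a plain truncation does the job. What your approach buys is a much cleaner argument with no multinomial bookkeeping and an explicit error decay rate $w^{4M}$ in $M$; what the paper's approach buys is a slightly more concrete approximant built from two identifiable pieces, and it stays closer to the algorithmic picture (truncate numerator, approximate denominator, multiply) that motivates the ``$2M$'' in the statement. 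Both require the same perturbative smallness to make the Neumann series converge; your condition $\theta<1$ is the Banach-algebra analogue of the paper's $\sup_x|\delta\sum p_if_i|<1$.
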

This theorem shows that the approximation error can be made arbitrarily small, provided that the number of basis functions $M$ is sufficiently large. The corresponding proof is provided in the appendix~\ref{appendix: proof}. \par

\section{Numerical Experiments}\label{sec: numerical}

In this section, we evaluate our approach on high-dimensional Boltzmann distributions and the MNIST data set. We begin by summarizing the overall numerical procedure. In the first step, we estimate the score function using Algorithm~\ref{algorithm: main} over an uniform time grid $t_{\text{grid}} \subset [0, T]$, where $T$ is chosen to be sufficiently large so that the initial data are fully diffused to the base distribution \eqref{eq: stationary distribution}. In the second step, we generate samples from the base distribution using the Metropolis–Hastings algorithm. Since our choices of base distributions are mean-field, this sampling step is relatively simple. In the third step, we simulate the reverse-time SDE \eqref{eq:reverse SDE} via Euler-Maruyama scheme over $t_{\text{grid}}$, starting from the generated samples at time $t = T$ and evolving them backward to $t = 0$. The resulting samples form an empirical distribution that approximates the target density $\rho_0$. We use the discrepancy between this empirical distribution and the ground truth distribution as the primary measure of error in our evaluation.

\subsection{One-dimensional Double-well Density}
We start our numerical experiments with a one-dimensional double-well density. A double-well density refers to a probability distribution whose potential energy function exhibits two distinct minima, or "wells," separated by a barrier. This structure often arises in physical systems, such as in statistical mechanics or molecular dynamics. The corresponding density is defined as
\begin{equation*}
    \rho_0(\bvec{x}) = \frac{1}{Z}\mathrm{e}^{-\beta_{{\mathrm{DW}}}(1 - x^2)^2/4},
\end{equation*}
where $\beta_{{\mathrm{DW}}}$ is the inverse of temperature and $Z$ is the normalization constant. We set $\beta_{{\mathrm{DW}}}=2.0$ for our test. In this experiment, we consider two choices of eigenfunctions for representing the score function introduced in Section~\ref{sec: 1-d case}: Hermite polynomials and the Fourier basis. We generate 40,000 initial samples from $\rho_0$ and evaluate the accuracy of score estimation using the relative $L_2$ score error:
\begin{equation*}
    \frac{\|s(t,x) - s^*(t,x)\|_{L_2(\rho_t)}}{\|s^*(t,x)\|_{L_2(\rho_t)}},
\end{equation*}
where $s^*(t,x)$ denotes the ground truth score function, obtained by solving the one-dimensional Fokker-Planck equation \eqref{eq: Fokker-planck} with finite difference method. This error is estimated using Monte Carlo integration with 100,000 samples from $\rho_t$ via the Euler-Maruyama scheme.\par 
Figure~\ref{fig: 1d Hermite} presents the results using Hermite polynomials as the basis. We fix potential $V(x)=\frac{1}{2}x^2$ and set the total diffusion time to $T = 2.0$ and time step $\Delta t =0.002$. Note that the initial score function is given by $s(0,x) = \partial_x \log \rho_0(x) = -\beta_{\mathrm{DW}} x(x^2 - 1)$, which is a cubic polynomial. This structure makes Hermite polynomials a natural and effective choice for the score representation.

The left subfigure of Figure~\ref{fig: 1d Hermite} shows the relative score error as a function of the number of basis functions, with $\beta = 1.0$ fixed. The error remains stable as the basis size increases, indicating good approximation properties. The middle subfigure examines the effect of varying $\beta$, which controls the standard deviation of the Gaussian base distribution. The results remain consistent across different $\beta$ values, demonstrating the robustness of our approach. Finally, the right subfigure illustrates the evolution of the score function over time for number of eigenfunctions $n = 9$ and $\beta = 1.0$, showing a transition from a cubic polynomial toward a linear function, consistent with the behavior expected under Ornstein–Uhlenbeck dynamics.

\begin{figure}[H]
    \centering
     \includegraphics[width=0.32\linewidth]{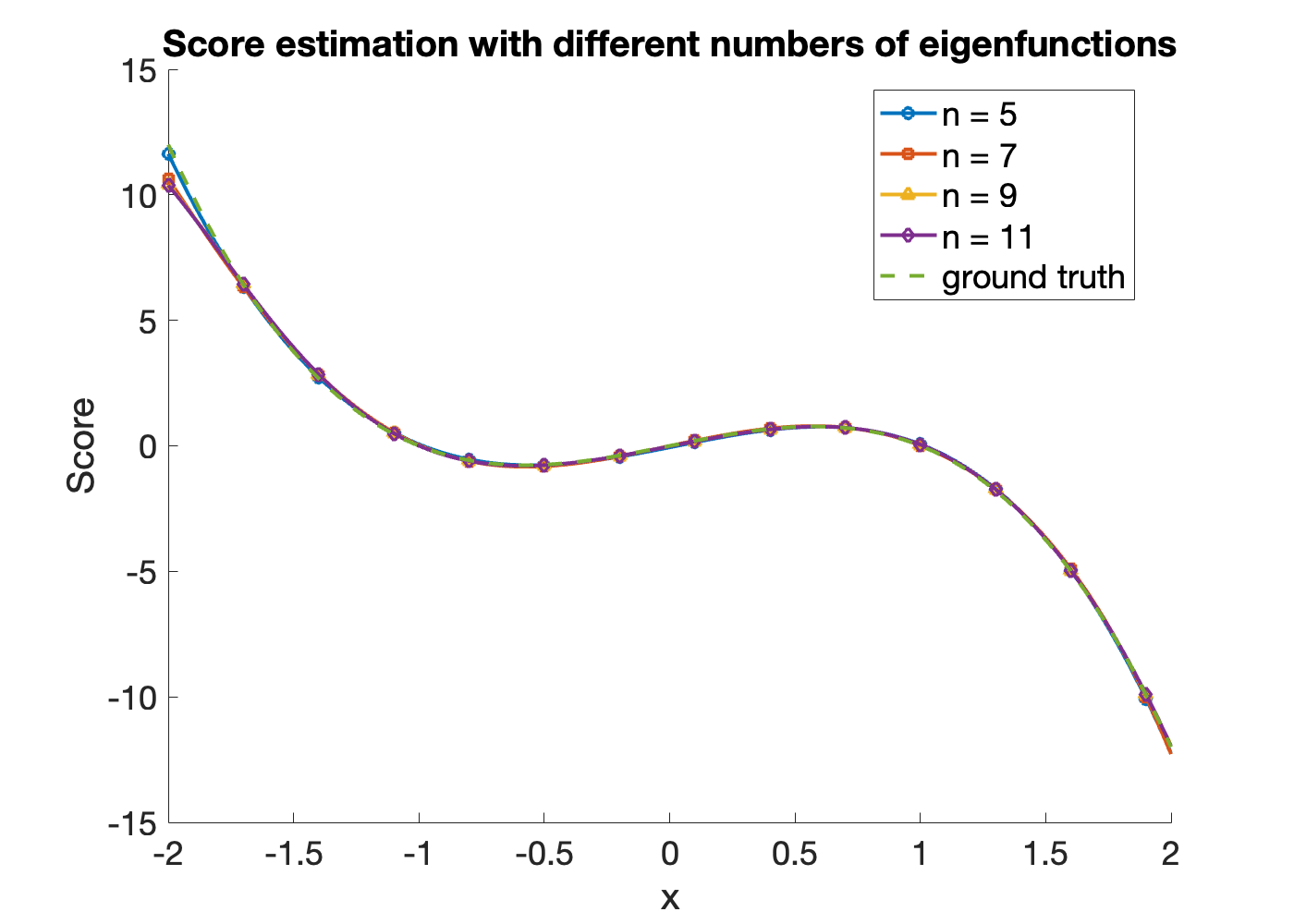}
     \includegraphics[width=0.32\linewidth]{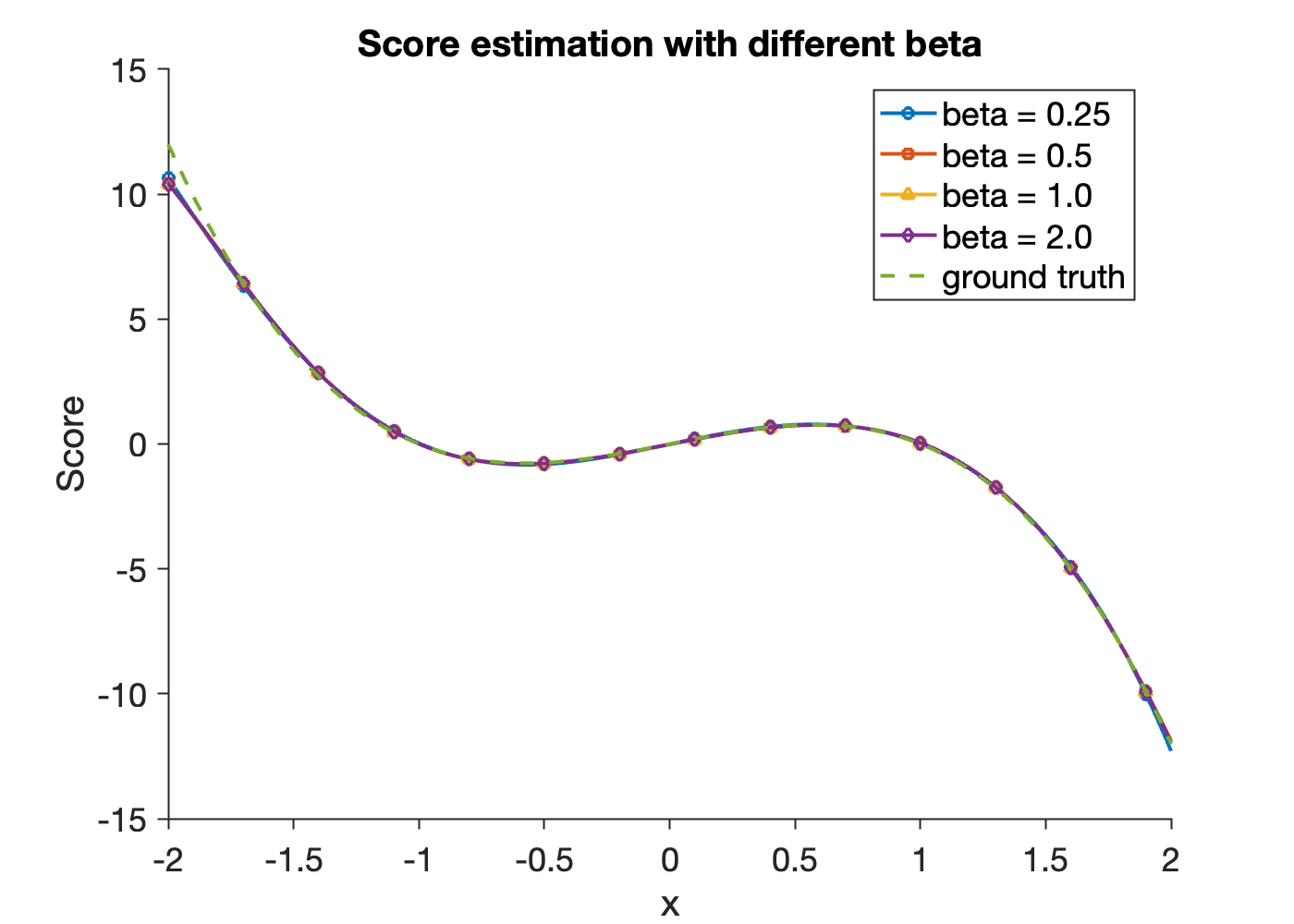}
     \includegraphics[width=0.32\linewidth]{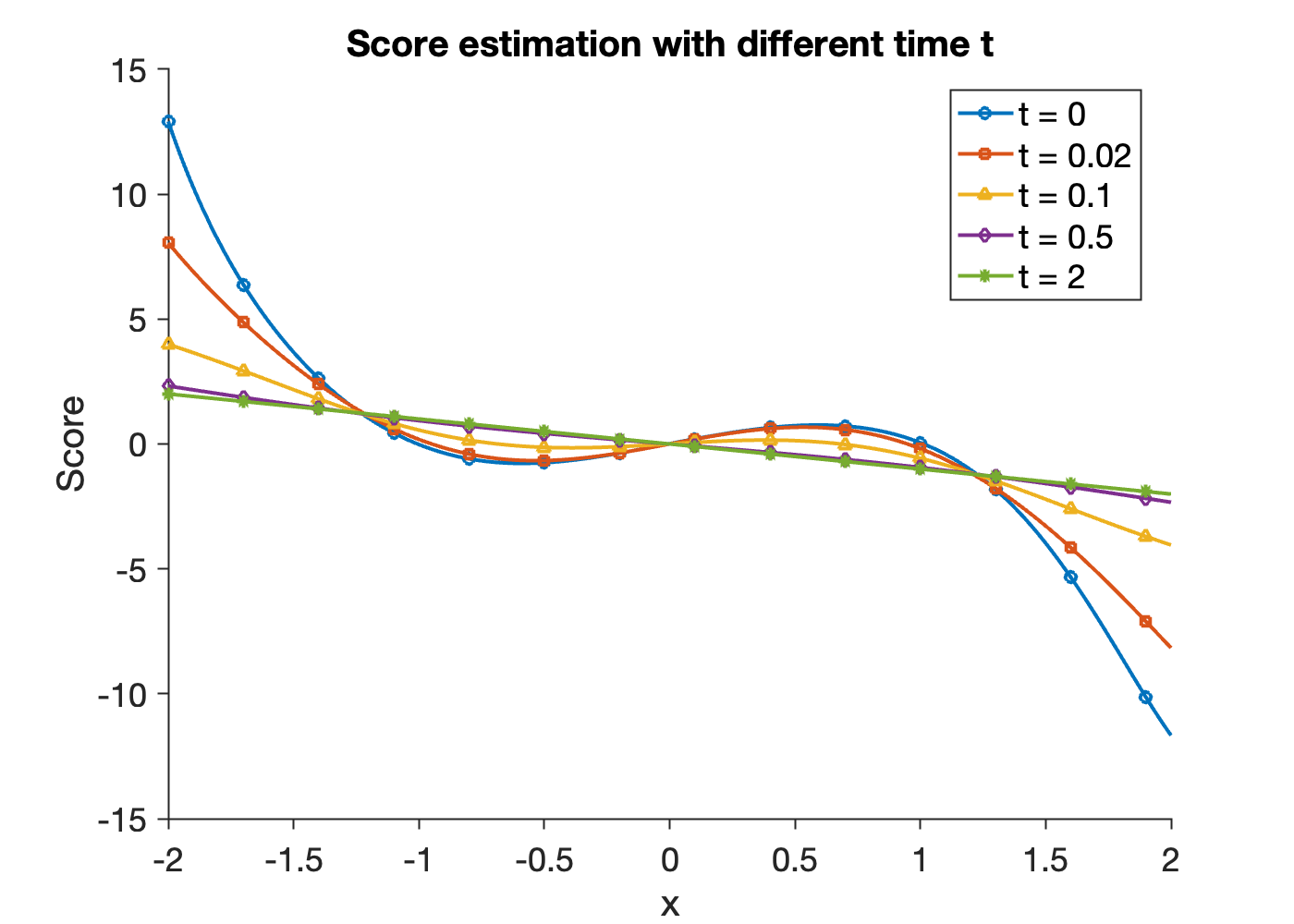}
    \caption{One-dimensional score estimation using Hermite polynomials. (Left) Estimated score functions for different numbers of eigenfunctions $n = 5, 7, 9, 11$. The corresponding relative errors are 0.0412, 0.0408, 0.0405, and 0.0401, respectively. (Middle) Score estimates for varying values of the parameter $\beta$, which controls the standard deviation of the Gaussian base distribution. The relative errors for $\beta = 0.25, 0.5, 1.0, 2.0$ are 0.0428, 0.0432, 0.0421, and 0.0421, respectively. (Right) Temporal evolution of the score function for $t = 0, 0.02, 0.1, 0.5, 2$. As expected, the score converges to a linear function over time in the Hermite polynomial representation.}\label{fig: 1d Hermite}
\end{figure}

Next, we present results using the Fourier basis, with parameters fixed as $\beta = 0.5$, $T = 2.0$ and time step $\Delta t =0.002$. The left subfigure of Figure~\ref{fig: 1d fourier} shows how the relative score error varies with the number of basis functions. As expected, the error is significantly higher when the number of basis functions is insufficient to capture the complexity of the score function. The middle subfigure illustrates the relative score error as a function of the domain size $L$. While the Fourier basis is periodic over the interval $[-L, L]$, the ground truth score function, defined on the support of the initial data distribution, is not inherently periodic. To ensure that the fitted score function accurately approximates the ground truth within the relevant region, the domain $[-L, L]$ must be large enough to cover the support of the initial distribution. This explains the poor performance when $L = 2$, as the domain is too small to capture the full support. The right subfigure visualizes the evolution of the score function over time. As time increases, the score function gradually converges to a constant, consistent with the expected long-term behavior under the chosen dynamics.

\begin{figure}[H]
    \centering
     \includegraphics[width=0.32\linewidth]{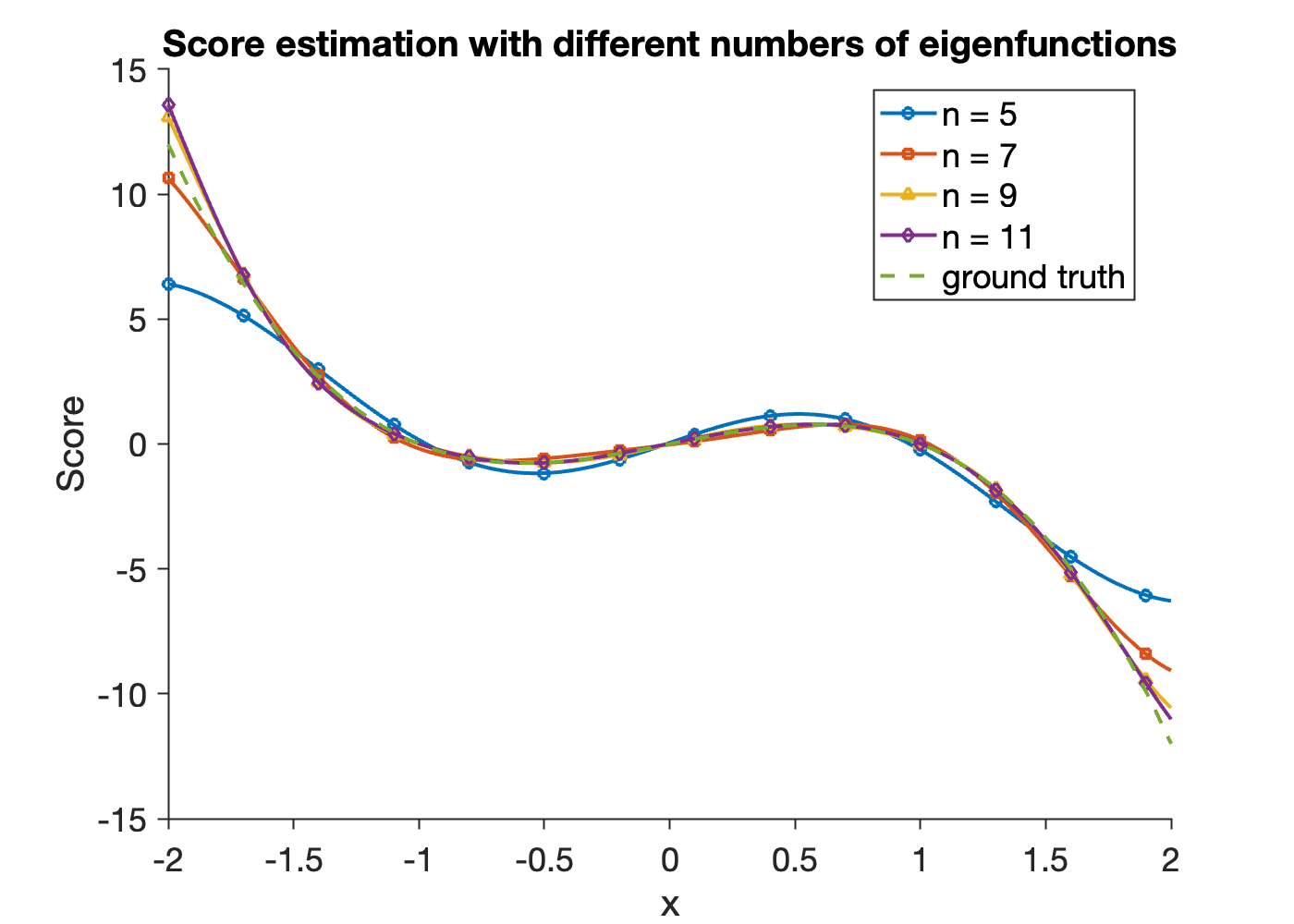}
     \includegraphics[width=0.32\linewidth]{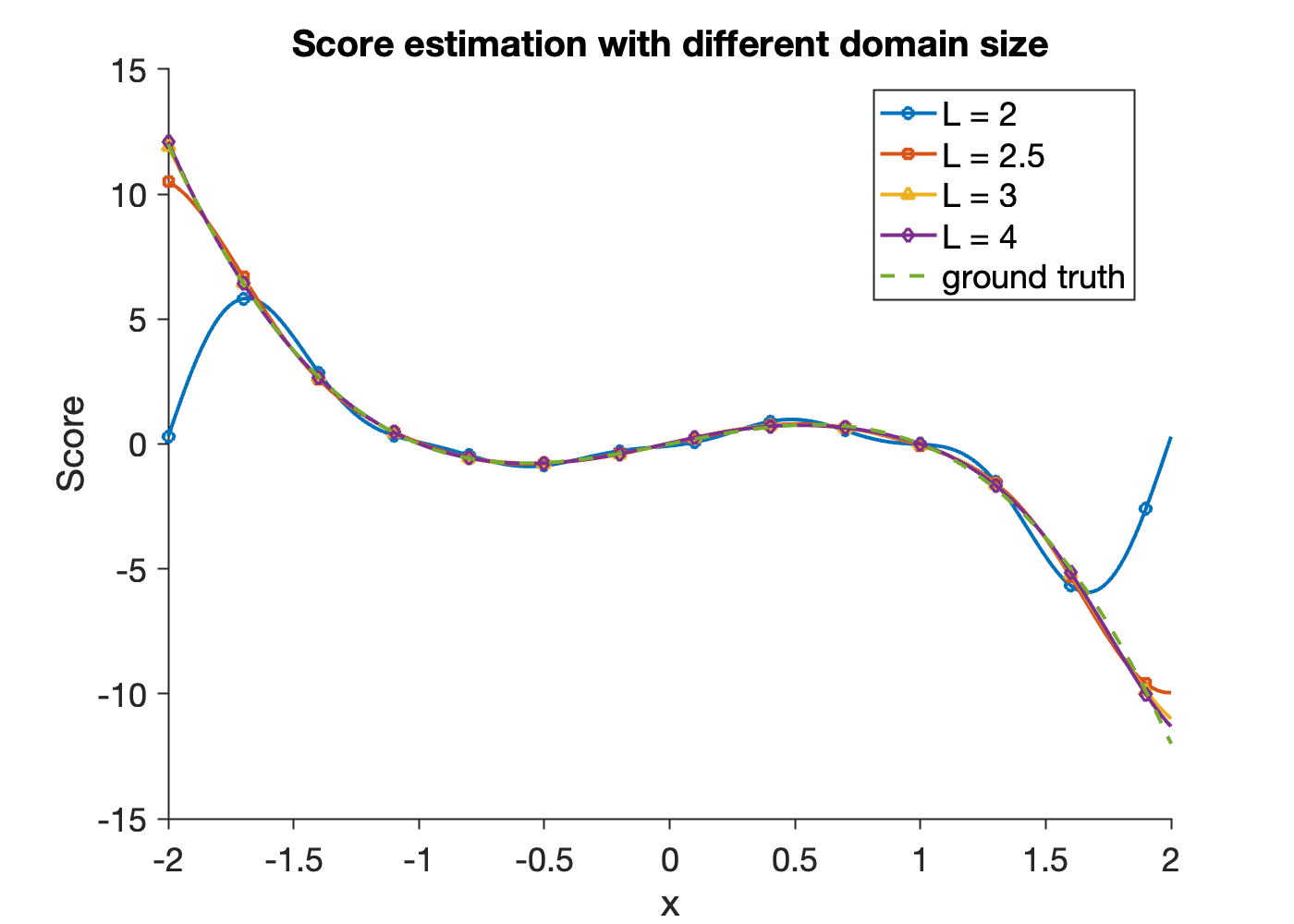}
     \includegraphics[width=0.32\linewidth]{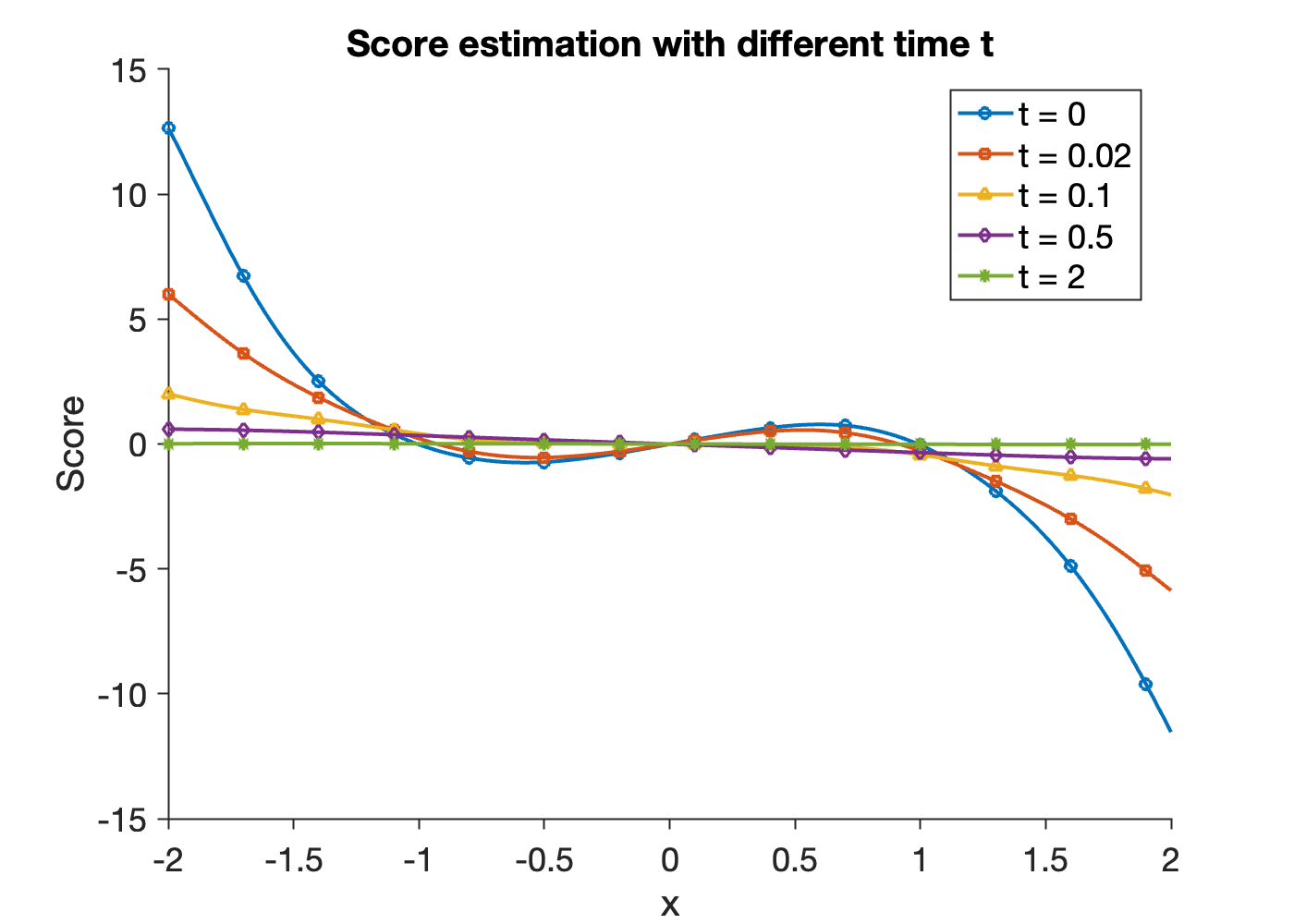}
    \caption{One-dimensional score estimation using the Fourier basis. (Left) We vary the number of eigenfunctions and visualize the estimated score. The relative score errors are 0.2756, 0.1053, 0.0666, and 0.0502 for $n = 5, 7, 9, 11$, respectively. (Middle) We vary the domain size in the Fourier basis and plot the resulting score functions. The corresponding relative score errors are 0.3612, 0.0763, 0.0501, and 0.0414 for $L = 2, 2.5, 3, 4$, respectively. (Right) We show the evolution of the score function over time for $t = 0, 0.02, 0.1, 0.5, 2$. As expected, the score function in the Fourier representation converges toward a constant zero function as time increases. }\label{fig: 1d fourier}
\end{figure}

\subsection{High-dimensional Double-well Density}
In this subsection, we consider the high-dimensional double-well density
\begin{equation*}
    \rho_0(\bvec{x}) = \frac{1}{Z}\mathrm{e}^{-\beta_{{\mathrm{DW}}} V_{{\mathrm{DW}}}(\bvec{x})}, V_{{\mathrm{DW}}}(\bvec{x})=\sum_{i=1}^d \frac{1}{4}(1 - x_i^2)^2,
\end{equation*}
where $\beta_{{\mathrm{DW}}}$ is the inverse of temperature and $Z$ is the normalization constant. The density is one type of mean-field density since the variables do not interact with each other, providing a simple high-dimensional example to illustrate the performance of our approach. We test $d=32$ and $\beta_{{\mathrm{DW}}}=8$. In this setting, we use $n = 10$ eigenfunctions per dimension and set the bandwidth to $d_\text{b} = 2$ for the local cluster basis representation~\eqref{eq: 2-cluster local basis}. The score function is estimated over the interval $t \in [0, T]$ with $T = 2$ and time-step $\Delta t = 0.002$, using 40,000 initial samples. \par 
We evaluate our method using all three types of basis functions. For the Hermite polynomial basis, we fix $\beta = 1.0$ and $V(x)=\frac{1}{2}\|x\|_2^2$ in \eqref{eq:overdamp langevin dynamic}. For the Fourier basis, we choose $L = 5.0$ and $\beta = 0.25$ in \eqref{eq:overdamp langevin dynamic}, since the dynamics converge slowly in this setting, and a smaller $\beta$ can accelerate convergence. For the mean-field approximated basis, we fix $\beta = 1.0$ and construct the approximating density by matching the first 6 moments in the entropy maximization problem~\eqref{eq: entropy maximization}.\par 
After learning the score function, we generate 40,000 samples from the base distribution and simulate the reverse-time SDE with time-step $\Delta t = 0.002$. For visualization, we primarily present one-dimensional marginal plots comparing the generated samples with reference samples. Figure~\ref{fig: 32d DW} shows the estimated density function at $x_{30}$, obtained via kernel density estimation from the generated samples. \par 

To quantify performance, we compute the relative error between the one-marginal estimated density of the generated samples and that of the reference samples, averaged across all dimensions. It is worth noting that the target density presents a significant contrast between peaks and wells, making accurate recovery particularly challenging. Despite this, the results show that the mean-field approximated basis achieves a relative error of 0.0322, demonstrating the effectiveness of our proposed approach. \par 

\begin{figure}[H]
    \centering
     \includegraphics[width=0.32\linewidth]{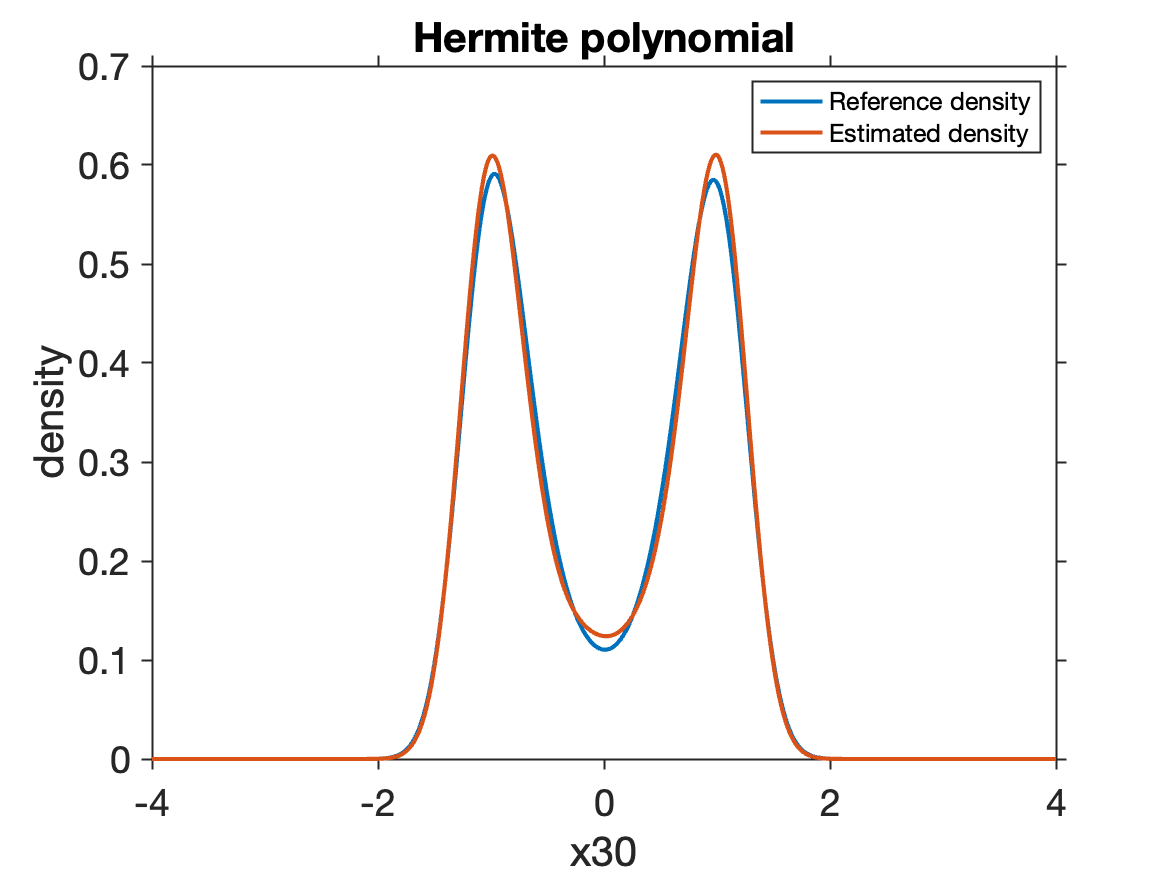}
     \includegraphics[width=0.32\linewidth]{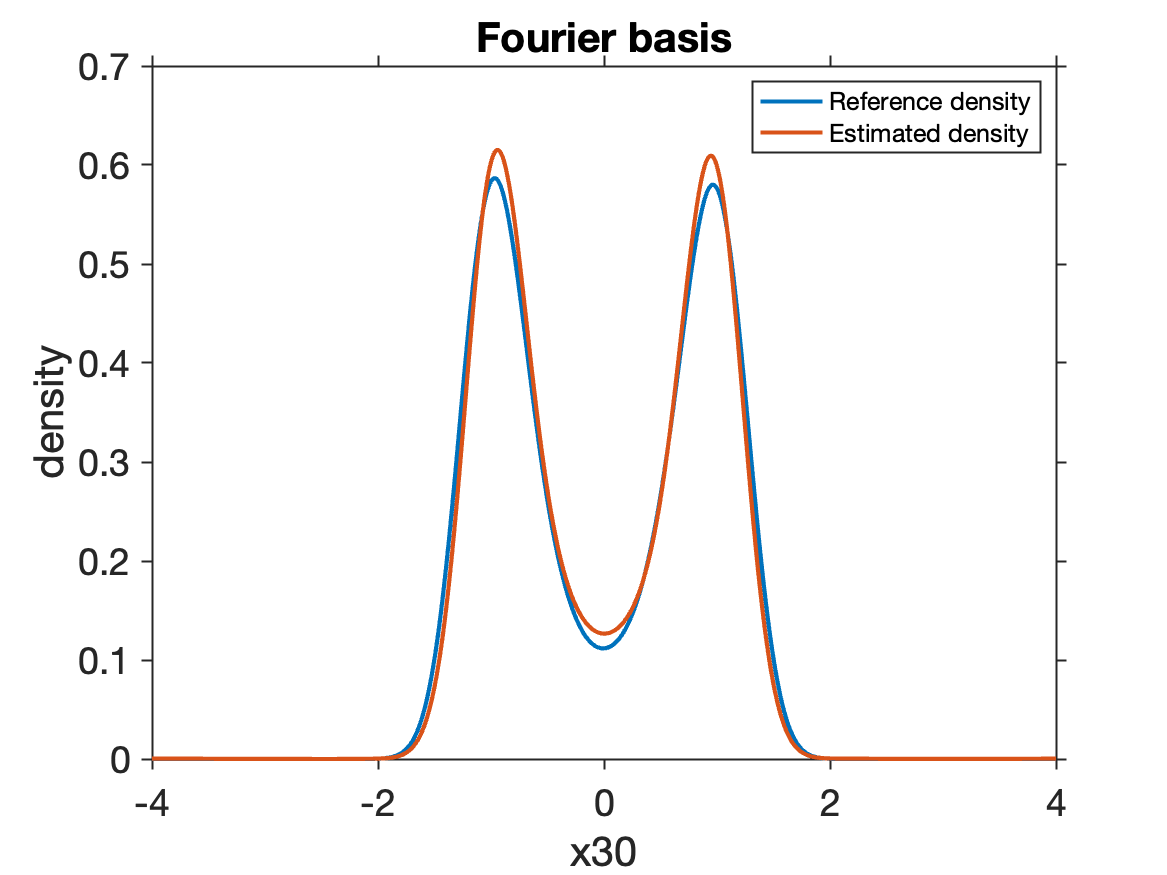}
     \includegraphics[width=0.32\linewidth]{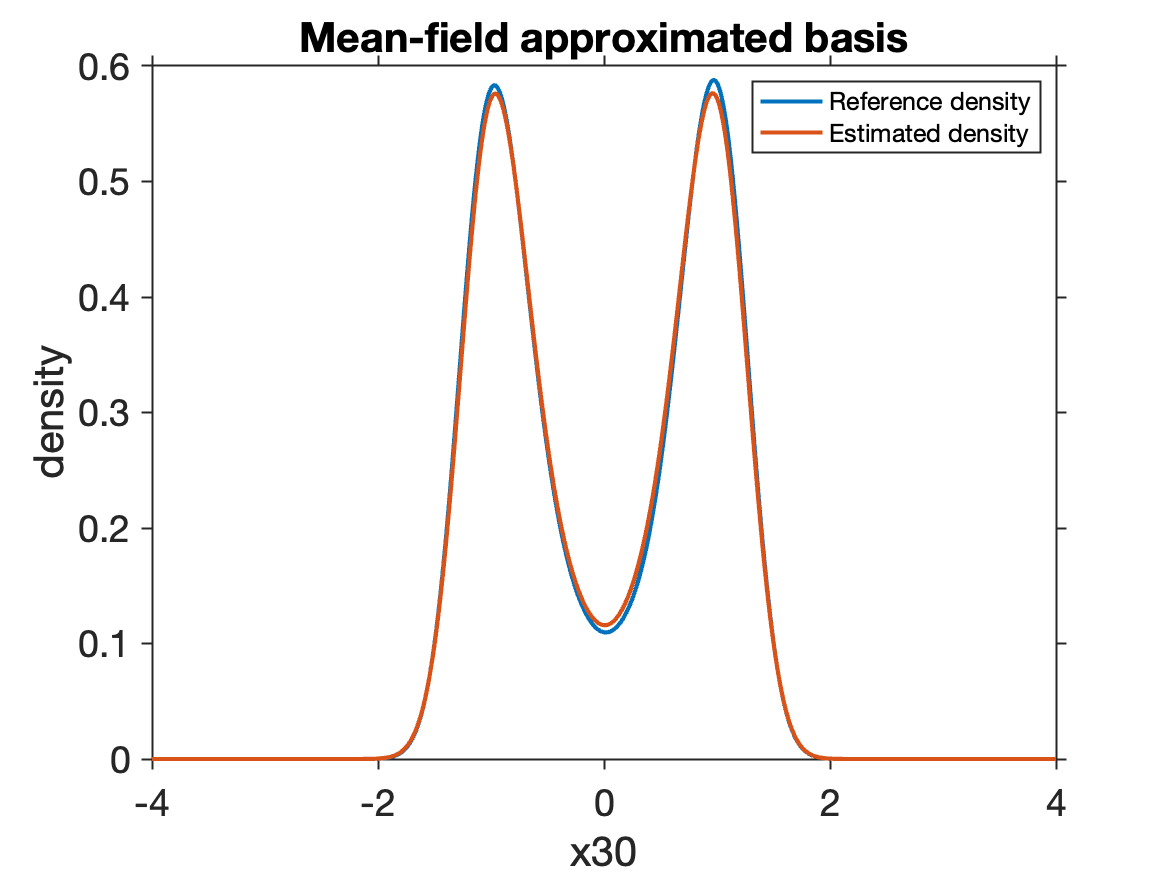}
    \caption{32-dimensional double-well density result. We visualize one-marginal densities at $x_{30}$ for three choices of eigenfunctions (Left) Hermite polynomial, (middle) Fourier basis, and (Right) Mean-field approximated basis. The relative one-marginal density errors are 0.0472, 0.0665, 0.0322, respectively.  }\label{fig: 32d DW}
\end{figure}

\subsection{High-dimensional Ginzburg-Landau Density}\label{sec: GL high-d}
In this subsection, we consider a further complicated density model, called
Ginzburg-Landau model: 
\begin{equation*}
\rho_0=\frac{1}{Z}\exp(-\beta_{\mathrm{GL}} V_{\mathrm{GL}}(x)), 
    V_{\mathrm{GL}}(\bvec{x}) = \sum^{d}_{i=1}\left( \frac{\lambda_{\mathrm{GL}}}{2} \left( \frac{x_i - x_{i-1}}{h} \right)^2 + \frac{1}{4\lambda_{\mathrm{GL}}}(1 - x_i^2)^2 \right),
\end{equation*}
for $\bvec{x} \in [-L,L]^d$, where $x_0=x_d$. In addition, the parameter $\beta_{\mathrm{GL}}$ denotes the inverse temperature, while $\lambda_{\mathrm{GL}}$ and $h$ control the interaction strength between different dimensional coordinates. In the numerical experiments, we fix $\beta_{\mathrm{GL}} = 1/8$ and $h = 0.1$, and consider various choices of $\lambda_{\mathrm{GL}}$ to evaluate the performance of our method under different interaction strength settings.\par 

For all experiments in this subsection, we use $n = 10$ basis functions per dimension. A total of 80,000 samples are drawn from the initial distribution to estimate the score function over the interval $t \in [0, T]$ with $T = 2.0$ and $\Delta t = 0.002$. For the score representation using the local cluster structure~\eqref{eq: 2-cluster local basis}, we set the bandwidth size to $d_{\text{b}} = 4$. We evaluate all three choices of eigenfunctions in our experiments. For the Hermite polynomial basis, we fix $\beta = 1.0$ and $V(x)=\frac{1}{2}\|x\|_2^2$. For the Fourier basis, we set $L = 5$ and $\beta = 0.25$. For the mean-field approximated basis, we fix $\beta = 1.0$ and construct the approximated density by matching the first 6 moments in the entropy maximization procedure. In addition, for the fast numerical technique described in~\eqref{eq: fast numerical} (Section~\ref{sec: fast numerical trick}), we select the rank that minimizes the relative error in the initial score estimation and use this same rank for score computation at all subsequent time steps $t$. \par 
In the reverse-time SDE step, we generate 80,000 samples from the base distribution and evolve them according to the learned dynamics, with time-step $\Delta t = 0.002$. To visualize the structure of the non-mean-field target density, we focus primarily on two-dimensional marginal plots, which highlight the accuracy of interactions between pairs of dimensional coordinates. To quantitatively assess the global accuracy of the generated samples, we compute the second-moment error:
\begin{equation*}
    \frac{\|\tilde{X}^T \tilde{X} - {X^*}^T X^*\|_F}{\|{X^*}^T X^*\|_F},
\end{equation*}
where $\tilde{X} \in \mathbb{R}^{N \times d}$ is the matrix of generated samples, and $X^* \in \mathbb{R}^{N \times d}$ is the matrix of reference samples.

\subsubsection{32d Result}
In this subsection, we present results for the 32-dimensional case, focusing on two choices of the interaction strength parameter. The weak interaction setting uses $\lambda_{\mathrm{GL}} = 0.03$, while the strong interaction setting uses $\lambda_{\mathrm{GL}} = 0.05$. These two choices already exhibit distinct patterns in the density 2-marginal visualizations, as illustrated in Figure~\ref{fig: 32dGL}.
The relative second-moment errors are reported in Table~\ref{tab: 32dGLresult}, which shows that the mean-field approximated basis achieves the best performance among the tested methods. \par 

Furthermore, Figure~\ref{fig: 32dGL} visualizes the two-dimensional marginal densities obtained using the mean-field approximated basis. The small discrepancy between the generated and reference distributions illustrates the effectiveness of our approach in capturing complex interactions.

\begin{table}[h]
    \centering
    \begin{tabular}{|c|c|c|c|c|c|c|}
    \hline 
         &  Mean-field approximated basis & Fourier basis & Hermite polynomial\\
         \hline 
    strong interaction   & 0.0512 & 0.0575 & 0.0588\\ 
    \hline 
    weak interaction &  0.0657 & 0.0652 & 0.0685\\
    \hline 
    \end{tabular}
    \caption{32-dimensional Ginzburg-Landau density result. We include the relative 2nd-moment error for both strong interaction case and weak interaction case among three choices of eigenfunctions. }
    \label{tab: 32dGLresult}
\end{table}

\begin{figure}[H]
    \centering
     \includegraphics[width=0.48\linewidth]{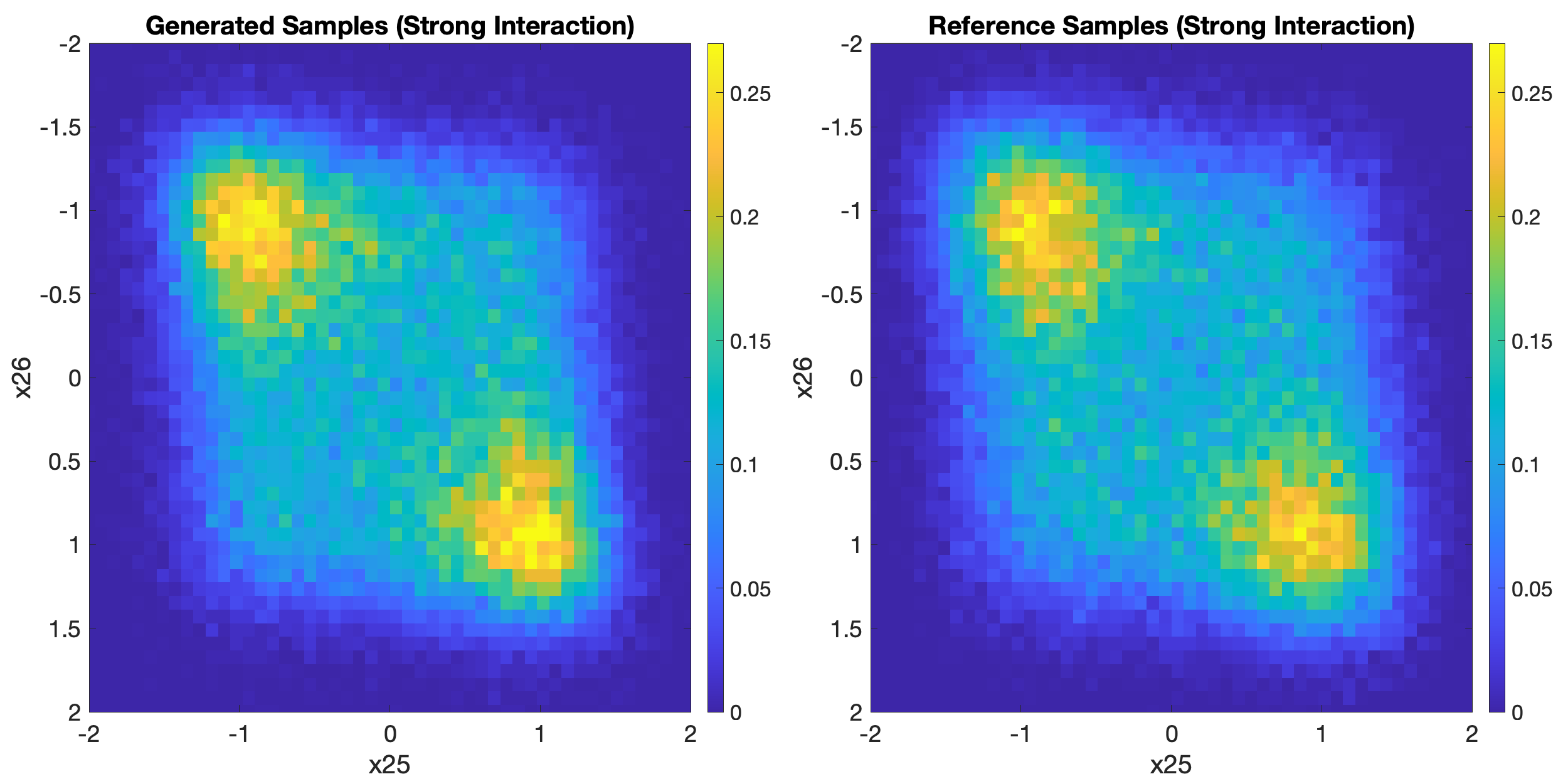}
    \includegraphics[width=0.48\linewidth]{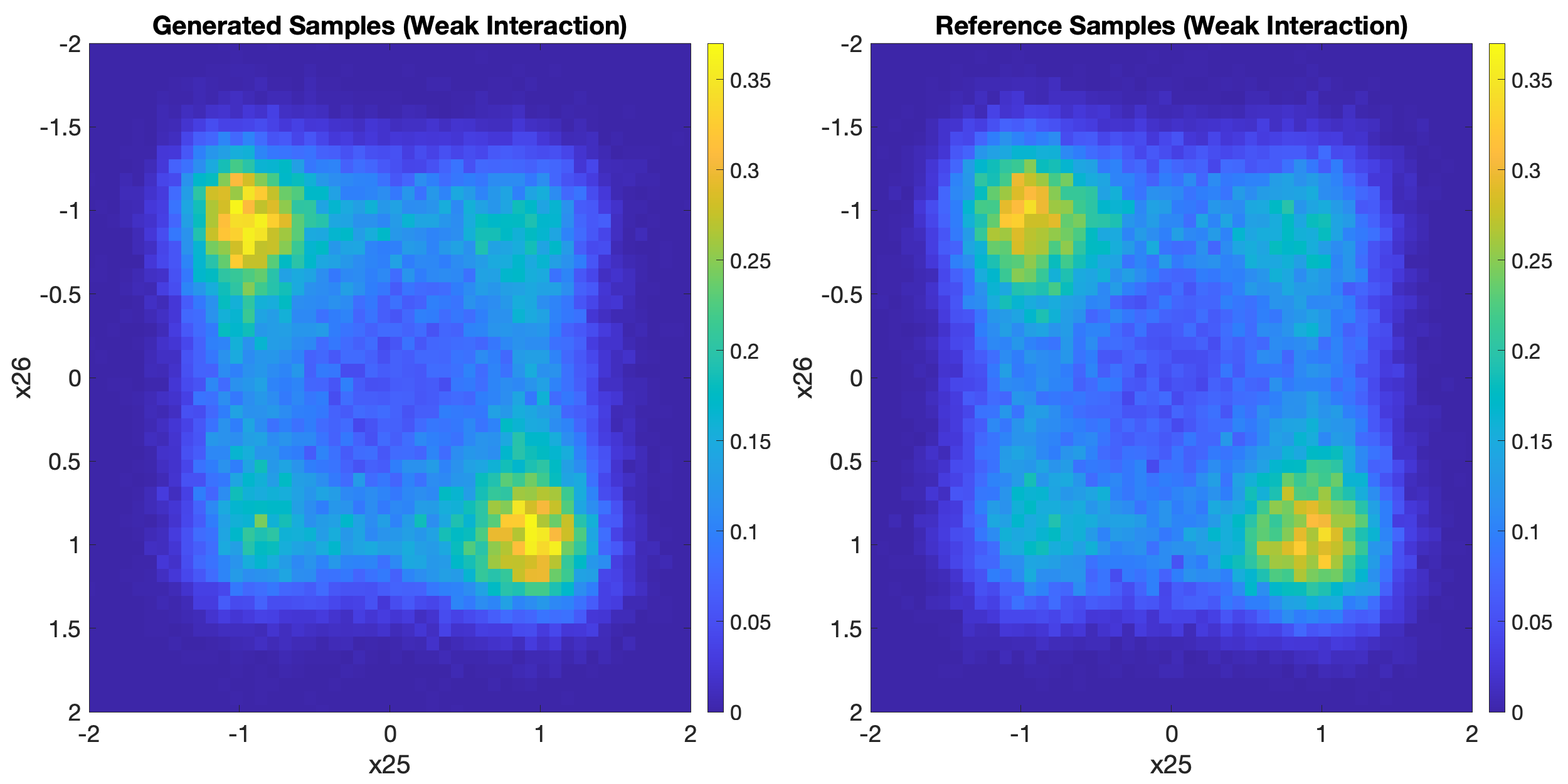}
     
    \caption{32-dimensional Ginzburg-Landau density result with mean-field approximated basis. We visualize two-marginal densities at $(x_{25},x_{26})$ for (Left) strong interaction setting and (right) weak interaction setting. }\label{fig: 32dGL}
\end{figure}

\subsubsection{64d Result}
We also evaluate our method in a 64-dimensional setting, keeping the interaction parameter fixed at $\lambda_{\mathrm{GL}} = 0.05$. In this experiment, we primarily use the mean-field approximated basis, while keeping all other parameters identical to those used in the 32-dimensional case. 

Figure~\ref{fig: 64dGL} presents selected two-marginal plots comparing the generated samples with the ground truth. The close agreement in second-moment statistics further supports the accuracy and robustness of our approach in high-dimensional settings.

\begin{figure}[H]
    \centering
     \includegraphics[width=0.48\linewidth]{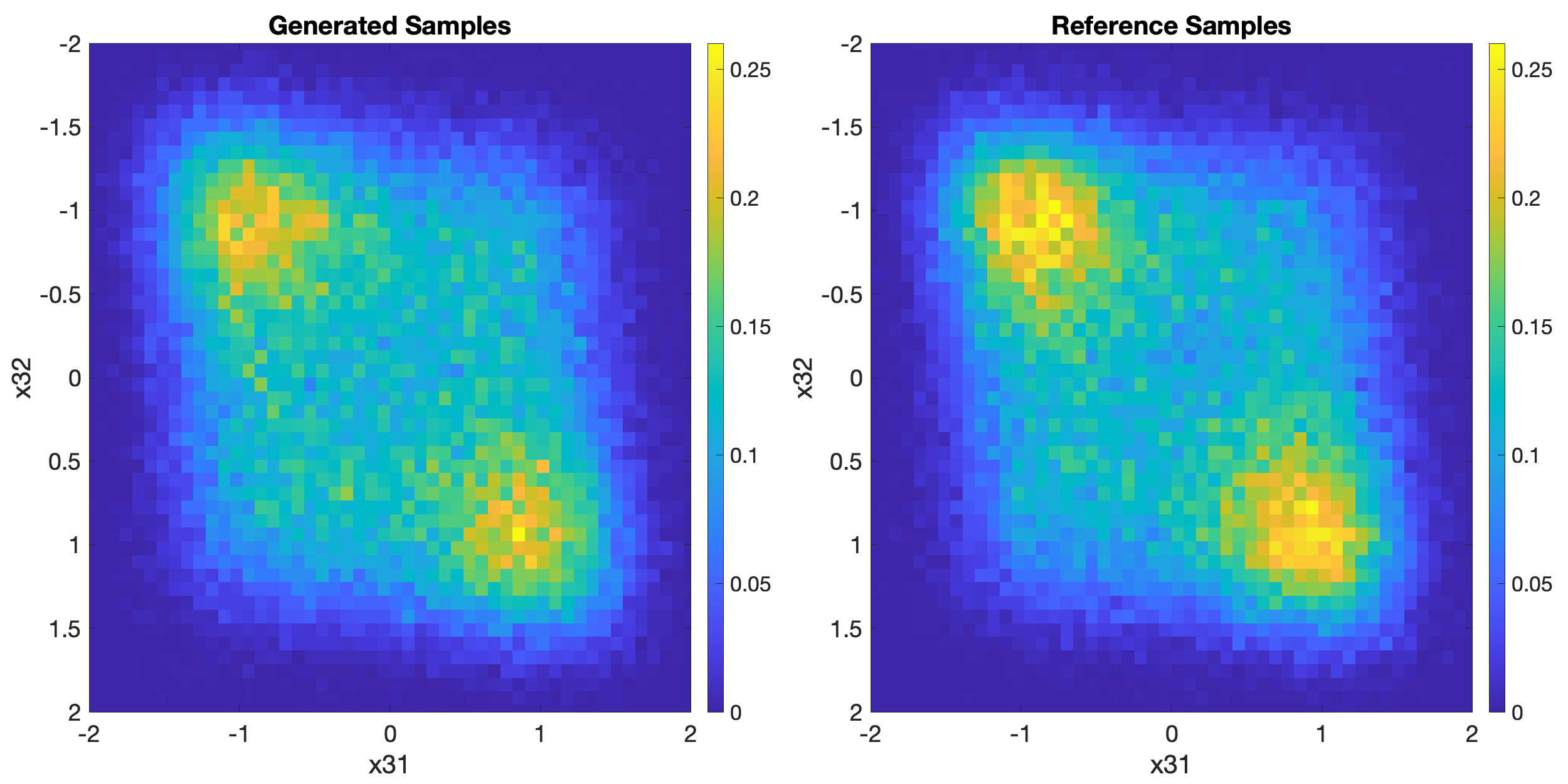}
    \includegraphics[width=0.48\linewidth]{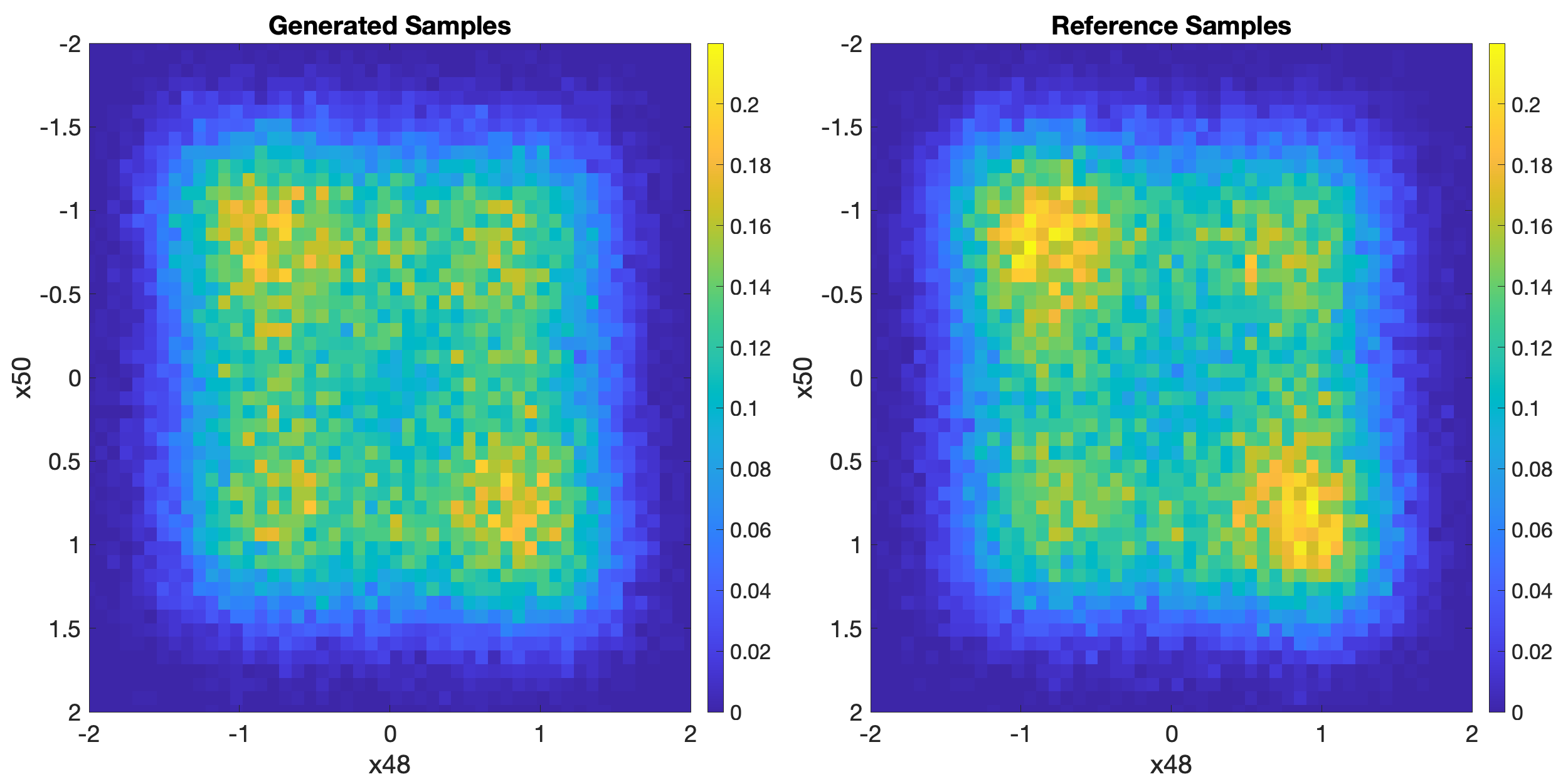}   
    \caption{64-dimensional Ginzburg-Landau density result with mean-field approximated basis. We visualize two-marginal densities (left) $(x_{31},x_{32})$  and (right) $(x_{48},x_{50})$. The relative 2nd moment error is 0.0745. }\label{fig: 64dGL}
\end{figure}

\subsection{MNIST Dataset}
In the final example, we apply our algorithm to the MNIST dataset, which consists of 70,000 images of handwritten digits (0 through 9) where each image is a $28 \times 28$ pixel matrix. In our approach, for each digit, we use 5,000 samples as training data to estimate the score function. Note that the dimensionality of each sample is $d = 28^2 = 784$, which is relatively high. To reduce the computational complexity, we first perform PCA on the training data and retain the first 10 principal components. We then apply our approach in this reduced space to generate new samples in the principal component domain. Finally, we map the generated principal components back to the original image space to recover the digit images.\par 

In the numerical experiments, we use 10 Fourier basis functions per dimension to learn the score function over the interval $t \in [0, 3]$ with time step $\Delta t = 0.002$, with a temperature parameter $\beta = 0.5$ and domain size $L = 4$. The rank used in the fast low-rank approximation technique follows the same selection strategy as described in Section~\ref{sec: GL high-d}. Figure~\ref{fig: MNIST} shows both the generated and real MNIST images. The results clearly demonstrate that the images generated by our method successfully preserve the structural features of the digits. \par

\begin{figure}[h]
    \centering
     \includegraphics[width=0.48\linewidth]{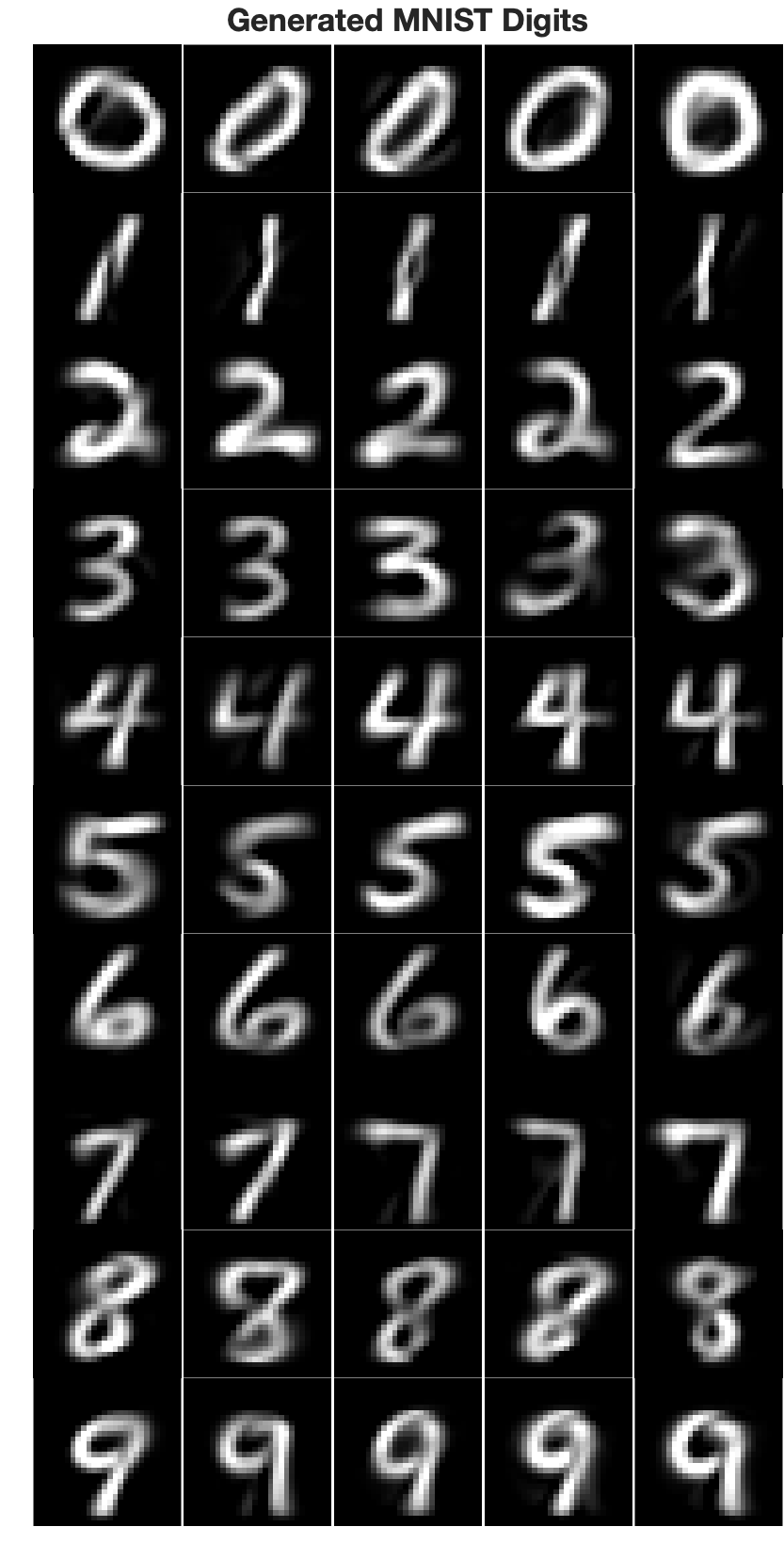} 
     \includegraphics[width=0.48\linewidth]{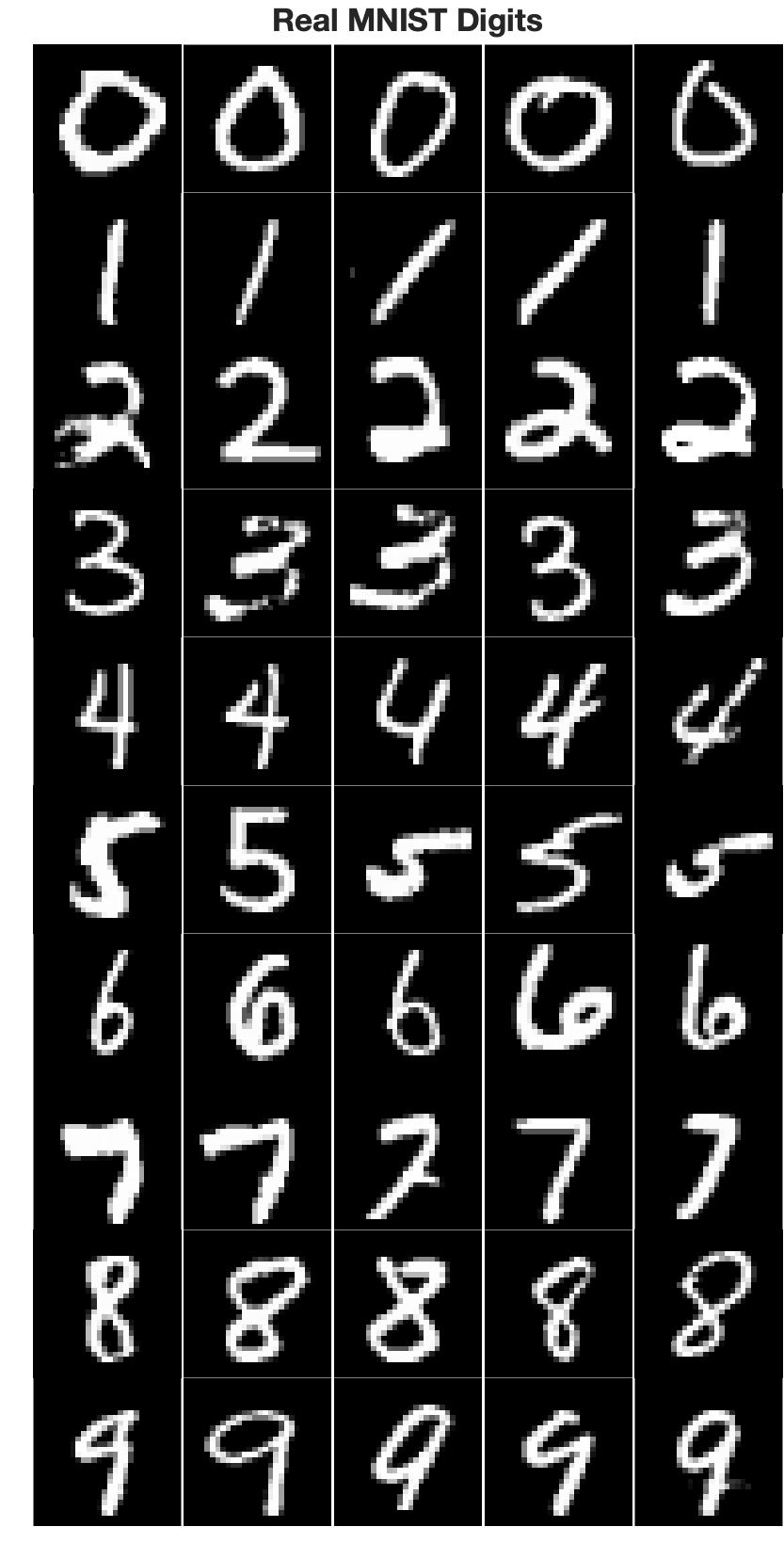}
    \caption{MNIST dataset result. (Left) Generated Images. (Right) Real Images. For each digit number, we choose five images randomly.}\label{fig: MNIST}
\end{figure}

\section{Conclusion}\label{sec: conclusion}

In this work, we introduced an alternative approach for diffusion models that avoids both iterative optimization and forward SDE. By representing the score function in a sparse set of eigenbasis of the backward Kolmogorov operator associated with a base distribution, we reformulated score estimation as a closed-form linear system problem—eliminating the need for iterative optimization and stochastic simulation. We provided a theoretical analysis of the approximation error using perturbation theory, showing that the score can be accurately estimated under mild assumptions. Our numerical experiments on high-dimensional Boltzmann distributions and real-world datasets, such as MNIST, demonstrate that the proposed method achieves competitive performance. As a direction for future work, we aim to extend this approach to more complex models and datasets. \par

\backmatter

\section*{Declarations}
\bmhead{Acknowledgements}
The authors acknowledge DMS-2339439 and DE-SC0022232, Sloan foundation, and funding by the Deutsche Forschungsgemeinschaft (DFG, German Research Foundation) -- project number 442047500 -- through the Collaborative Research Center ``Sparsity and Singular Structures'' (SFB 1481) for travel support. 
\\








\begin{appendices}

\section{Coefficient Computation in High-dimensional 
 Setting}\label{appendix: coefficient computation high-d}
 
In this appendix, we provide detailed computations of the high-dimensional coefficients involved in the linear system. Recall the definitions of the coefficient matrix $A(t) \in \mathbb{R}^{|\mathcal{S}| \times |\mathcal{S}|}$ and the vector $\bvec{b}^{(i)}(t) \in \mathbb{R}^{|\mathcal{S}| \times 1}$:

\begin{align}\label{eq: A and b appendix}
  & A_{l,l'}(t)=\int \rho_t (\bvec{x})f_l(\bvec{x})f_{l'}(\bvec{x})\mathrm{d}\bvec{x}, \nonumber\\
& b^{(i)}_l(t)=\int \rho_t(\bvec{x})\left(\partial_{x_i}f_l(\bvec{x})-\beta f_l(\bvec{x})\partial_{x_i}V(\bvec{x})\right)\mathrm{d}\bvec{x},
\end{align}
Each basis function $f_l(\bvec{x})$ is of the form $\phi^{(k_1)}_{n_{k_1}}(x_{k_1}) \phi^{(k_2)}_{n_{k_2}}(x_{k_2})$, for some pair of indices $(k_1, k_2)$.

We compute $A(t)$ first. Using the eigenfunction property, we have:
\begin{align*}
        A_{l,l'}(t) = \int \rho_t(\bvec{x})  \left(\prod_{j=1}^4 \phi^{(k_j)}_{n_{k_j}}(x_{k_j})\right) \mathrm{d}\bvec{x} 
         = \int \rho_0(\bvec{x}) e^{\mathcal{L} t}\left(\prod_{j=1}^4 \phi^{(k_j)}_{n_{k_j}}(x_{k_j})\right) \mathrm{d}\bvec{x} 
\end{align*}
where the product arises from multiplying the two basis functions associated with indices $l$ and $l'$. According to the local 2-cluster basis construction in equation~\eqref{eq: 2-cluster local basis}, we have $k_1 \neq k_2$ and $k_3 \neq k_4$. Based on this structure, we categorize the computations into several distinct cases.

\begin{enumerate}
    \item When all four indices are distinct ($k_1 \neq k_2 \neq k_3 \neq k_4$), the eigenfunctions are separable, and the operator $e^{\mathcal{L}t}$ can be applied to each term individually:
    \begin{equation*}
        e^{\mathcal{L} t}\left(\prod_{j=1}^4 \phi^{(k_j)}_{n_{k_j}}(x_{k_j})\right) = e^{(\sum_{j=1}^4\lambda^{(k_j)}_{n_{k_j}})t}\prod_{j=1}^4 \phi^{(k_j)}_{n_{k_j}}(x_{k_j})
    \end{equation*}
    Therefore,
    \begin{equation*}
        A_{l,l'}(t) = \int \rho_0(\bvec{x}) e^{\mathcal{L} t}\left(\prod_{j=1}^4 \phi^{(k_j)}_{n_{k_j}}(x_{k_j})\right) \mathrm{d}\bvec{x} = e^{(\sum_{j=1}^4\lambda^{(k_j)}_{n_{k_j}})t} \int \rho_0(\bvec{x}) \prod_{j=1}^4 \phi^{(k_j)}_{n_{k_j}}(x_{k_j}) \mathrm{d}\bvec{x}
    \end{equation*}
    The integral on the right-hand side is evaluated using Monte Carlo integration with samples drawn from $\rho_0$ at the initial time.

    \item When one pair among $\{x_{k_j}\}_{j=1}^4$ shares the same index (e.g., $k_1 = k_3 \neq k_2 \neq k_4$), we express the product of the two basis functions with the same variable as a linear combination in \eqref{eq:linear interpolation}:

    $\phi^{(k_1)}_{n_{k_1}}(x_{k_1})\phi^{(k_3)}_{n_{k_3}}(x_{k_3})= \sum_{l_1=1}^m u^{(n_{k_1},n_{k_3})}_{l_1}\phi^{(k_1)}_{l_1}(x_{k_1})$. Here $|\mathcal{S}'|=m$ follows the same expansion strategy demonstrated in Section~\ref{sec: solve LS 1d}. Therefore, 
    \begin{align*}
        &e^{\mathcal{L} t}\left(\prod_{j=1}^4 \phi^{(k_j)}_{n_{k_j}}(x_{k_j})\right) = e^{\mathcal{L}_{k_1} t}\left( \phi^{(k_1)}_{n_{k_1}}(x_{k_1})\phi^{(k_3)}_{n_{k_3}}(x_{k_3}) \right)
        e^{\mathcal{L}_{k_2} t}\phi^{(k_2)}_{n_{k_2}}(x_{k_2})
        e^{\mathcal{L}_{k_4} t}\phi^{(k_4)}_{n_{k_4}}(x_{k_4}) \nonumber\\
        & = e^{\mathcal{L}_{k_1} t}\left( \sum_{l_1=1}^m u^{(n_{k_1},n_{k_2})}_{l_1}\phi^{(k_1)}_{l_1}(x_{k_1}) \right)
        e^{\lambda^{(k_3)}_{n_{k_3}} t}\phi^{(k_3)}_{n_{k_3}}(x_{k_3})
        e^{\lambda^{(k_4)}_{n_{k_4}} t}\phi^{(k_4)}_{n_{k_4}}(x_{k_4}) \nonumber\\
        & = \left( \sum_{l_1=1}^m u^{(n_{k_1},n_{k_2})}_{l_1} e^{\lambda^{(k_1)}_{l_1} t} \phi^{(k_1)}_{l_1}(x_{k_1}) \right)
        e^{\lambda^{(k_3)}_{n_{k_3}} t}\phi^{(k_3)}_{n_{k_3}}(x_{k_3})
        e^{\lambda^{(k_4)}_{n_{k_4}} t}\phi^{(k_4)}_{n_{k_4}}(x_{k_4}) \nonumber\\
        & =  \sum_{l_1=1}^m u^{(n_{k_1},n_{k_2})}_{l_1} e^{(\lambda^{(k_1)}_{l_1}+\lambda^{(k_3)}_{k_3}+\lambda^{(k_4)}_{k_4}) t} \phi^{(k_1)}_{l_1}(x_{k_1})\phi^{(k_3)}_{n_{k_3}}(x_{k_3})\phi^{(k_4)}_{n_{k_4}}(x_{k_4})
    \end{align*}
    Hence,
    \begin{equation*}
        A_{l,l'}(t) = \sum_{l_1=1}^m u^{(n_{k_1},n_{k_2})}_{l_1} e^{(\lambda^{(k_1)}_{l_1}+\lambda^{(k_3)}_{k_3}+\lambda^{(k_4)}_{k_4}) t} 
 \int \rho_0(x)\phi^{(k_1)}_{l_1}(x_{k_1})\phi^{(k_3)}_{n_{k_3}}(x_{k_3})\phi^{(k_4)}_{n_{k_4}}(x_{k_4}) \mathrm{d}x
    \end{equation*}
    The integral on the right-hand side is evaluated using Monte Carlo integration, following the same approach as in the previous case.

    \item When there are two repeated pairs among $\{x_{k_j}\}_{j=1}^4$ (e.g., $k_1 = k_3$, $k_2 = k_4$, with $k_1 \neq k_2$), we express each product of basis functions with repeated indices as a linear combination:

    $\phi^{(k_1)}_{n_{k_1}}(x_{k_1})\phi^{(k_3)}_{n_{k_3}}(x_{k_3})= \sum_{l_1=1}^m u^{(n_{k_1},n_{k_3})}_{l_1}\phi^{(k_1)}_{l_1}(x_{k_1}), \phi^{(k_2)}_{n_{k_2}}(x_{k_2})\phi^{(k_4)}_{n_{k_4}}(x_{k_4})= \sum_{l_1=1}^m u^{(n_{k_2},n_{k_4})}_{l_1}\phi^{(k_2)}_{l_1}(x_{k_2})$, and thus
    \begin{align*}
        &e^{\mathcal{L} t}\left(\prod_{j=1}^4 \phi^{(k_j)}_{n_{k_j}}(x_{k_j})\right) = e^{\mathcal{L}_{k_1} t}\left( \phi^{(k_1)}_{n_{k_1}}(x_{k_1})\phi^{(k_3)}_{n_{k_3}}(x_{k_3}) \right)
        e^{\mathcal{L}_{k_2} t}\left(\phi^{(k_2)}_{n_{k_2}}(x_{k_2})\phi^{(k_4)}_{n_{k_4}}(x_{k_4})\right) \nonumber\\
        & = e^{\mathcal{L}_{k_1} t}\left( \sum_{l_1=1}^m u^{(n_{k_1},n_{k_3})}_{l_1}\phi^{(k_1)}_{l_1}(x_{k_1}) \right)e^{\mathcal{L}_{k_2} t}\left( \sum_{l_2=1}^m u^{(n_{k_2},n_{k_4})}_{l_2}\phi^{(k_2)}_{l_2}(x_{k_2})\right) \nonumber\\
        & = \left( \sum_{l_1=1}^m u^{(n_{k_1},n_{k_3})}_{l_1} e^{\lambda^{(k_1)}_{l_1} t} \phi^{(k_1)}_{l_1}(x_{k_1}) \right)
        \left( \sum_{l_2=1}^m u^{(n_{k_2},n_{k_4})}_{l_2}e^{\lambda^{(k_2)}_{l_2} t} \phi^{(k_2)}_{l_2}(x_{k_2})\right)\nonumber\\
        & =  \sum_{l_1,l_2=1}^m  u^{(n_{k_1},n_{k_3})}_{l_1} u^{(n_{k_2},n_{k_4})}_{l_2} e^{(\lambda^{(k_1)}_{l_1}+\lambda^{(k_2)}_{l_2}) t} \phi^{(k_1)}_{l_1}(x_{k_1}) \phi^{(k_2)}_{l_2}(x_{k_2})
    \end{align*} 
    Therefore, 
    \begin{equation*}
        A_{l,l'}(t) = \sum_{l_1,l_2=1}^m  u^{(n_{k_1},n_{k_3})}_{l_1} u^{(n_{k_2},n_{k_4})}_{l_2} e^{(\lambda^{(k_1)}_{l_1}+\lambda^{(k_2)}_{l_2}) t} \int \rho_0(x)\phi^{(k_1)}_{l_1}(x_{k_1}) \phi^{(k_2)}_{l_2}(x_{k_2})\mathrm{d}x
    \end{equation*}
\end{enumerate}
The case-by-case discussion above completes the computation of all entries in the matrix $A(t)$.\par

Next, we discuss the computation of $b^{(i)}(t)$. Since the chosen potential $V$ is of mean-field form, the partial derivative $\partial_{x_i} V(\bvec{x})$ depends only on the single variable $x_i$. Suppose we have the following linear expansions as in \eqref{eq:linear interpolation}:
\begin{equation}
    \phi^{(i)'}_{n_i}(x_{i}) = \sum_{l_1=1}^m v_{l_1}^{(n_i)} \phi^{(i)}_{l_1} (x_i), \partial_{x_i} V(x) = \sum_{l_1=1}^m q^{(i)}_{l_1} \phi^{(i)}_{l_1} (x_i)
\end{equation}

Assume that the basis function $f_l(\bvec{x})$ in equation~\eqref{eq: A and b appendix} takes the form $f_l(\bvec{x}) = \phi^{(k_1)}_{n_{k_1}}(x_{k_1}) \phi^{(k_2)}_{n_{k_2}}(x_{k_2})$. Then we have:
\begin{align*}
    e^{\mathcal{L}t}\left(\partial_{x_i}f_l(\bvec{x})-\beta f_l(\bvec{x})\partial_{x_i}V(\bvec{x})\right) = e^{\mathcal{L}t}\left(\partial_{x_i}(\phi^{(k_1)}_{n_{k_1}}(x_{k_1})\phi^{(k_2)}_{n_{k_2}}(x_{k_2}))-\beta \phi^{(k_1)}_{n_{k_1}}(x_{k_1})\phi^{(k_2)}_{n_{k_2}}(x_{k_2})\partial_{x_i}V(\bvec{x})\right)
\end{align*}
We now compute the expression above on a case-by-case basis, depending on the relationship between $i$, $k_1$, and $k_2$.\par 
\begin{enumerate}
    \item If $k_1 \neq i$ and $k_2 \neq i$, then the derivative term $\partial_{x_i} f_l(\bvec{x})$ vanishes. The expression simplifies to:
    \begin{align*}
        & -\beta e^{\mathcal{L}t}\left( \phi^{(k_1)}_{n_{k_1}}(x_{k_1})\phi^{(k_2)}_{n_{k_2}}(x_{k_2}) \partial_{x_i}V(\bvec{x}) \right) = -\beta e^{\mathcal{L}t}\left( \phi^{(k_1)}_{n_{k_1}}(x_{k_1})\phi^{(k_2)}_{n_{k_2}}(x_{k_2}) \sum_{l_1=1}^m q^{(i)}_{l_1} \phi^{(i)}_{l_1} (x_i)\right) \nonumber\\ 
        & =-\beta \sum_{l_1=1}^m q^{(i)}_{l_1} e^{\mathcal{L}t}\left( \phi^{(k_1)}_{n_{k_1}}(x_{k_1})\phi^{(k_2)}_{n_{k_2}}(x_{k_2})\phi^{(i)}_{l_1} (x_i)\right) \nonumber\\
        & = -\beta \sum_{l_1=1}^m q^{(i)}_{l_1} e^{(\lambda^{(k_1)}_{n_{k_1}} + \lambda^{(k_2)}_{n_{k_2}} + \lambda^{(i)}_{l_1}) t}\phi^{(k_1)}_{n_{k_1}}(x_{k_1})\phi^{(k_2)}_{n_{k_2}}(x_{k_2})\phi^{(i)}_{l_1} (x_i)
    \end{align*}
    Therefore,
    \begin{equation*}
        b_l^{(i)}(t) = -\beta \sum_{l_1=1}^m q^{(i)}_{l_1} e^{(\lambda^{(k_1)}_{n_{k_1}} + \lambda^{(k_2)}_{n_{k_2}} + \lambda^{(i)}_{l_1}) t} \int \rho_0(x)\phi^{(k_1)}_{n_{k_1}}(x_{k_1})\phi^{(k_2)}_{n_{k_2}}(x_{k_2})\phi^{(i)}_{l_1} (x_i) \mathrm{d}x
    \end{equation*}

    \item If $k_1=i$ and $k_2\neq k_1$, then the formula becomes
    \begin{align*}
         &e^{\mathcal{L}t}\left(\phi^{(i)'}_{n_{i}}(x_{i})\phi^{(k_2)}_{n_{k_2}}(x_{k_2}) -\beta \phi^{(i)}_{n_{i}}(x_{i})\phi^{(k_2)}_{n_{k_2}}(x_{k_2})\partial_{x_i}V(\bvec{x})\right) \nonumber\\
        & = e^{\mathcal{L}t}\left((\sum_{l_1=1}^m v_{l_1}^{(n_i)} \phi^{(i)}_{l_1} (x_i))  \phi^{(k_2)}_{n_{k_2}}(x_{k_2}) -\beta \phi^{(i)}_{n_{i}}(x_{i})\phi^{(k_2)}_{n_{k_2}}(x_{k_2})(\sum_{l_1=1}^m q^{(i)}_{l_1} \phi^{(i)}_{l_1} (x_i))\right) \nonumber\\
        & = \sum_{l_1=1}^m v_{l_1}^{(n_i)} e^{(\lambda^{(i)}_{l_1} + \lambda^{(k_2)}_{n_{k_2}})t } \phi^{(i)}_{l_1} (x_i)  \phi^{(k_2)}_{n_{k_2}}(x_{k_2}) -\beta \sum_{l_1=1}^m q^{(i)}_{l_1} e^{\mathcal{L}t}\left(\phi^{(i)}_{n_{i}}(x_{i})\phi^{(i)}_{l_1} (x_i)\phi^{(k_2)}_{n_{k_2}}(x_{k_2})\right) \nonumber\\
        & = \sum_{l_1=1}^m v_{l_1}^{(n_i)} e^{(\lambda^{(i)}_{l_1} + \lambda^{(k_2)}_{n_{k_2}})t } \phi^{(i)}_{l_1} (x_i)  \phi^{(k_2)}_{n_{k_2}}(x_{k_2}) -\beta \sum_{l_1=1}^m q^{(i)}_{l_1} e^{\mathcal{L}t}\left(\sum_{l_2=1}^m u^{(n_{i},l_1)}_{l_2}\phi^{(i)}_{l_2}(x_{i}) \phi^{(k_2)}_{n_{k_2}}(x_{k_2})\right) \nonumber\\
        & = \sum_{l_1=1}^m v_{l_1}^{(n_i)} e^{(\lambda^{(i)}_{l_1} + \lambda^{(k_2)}_{n_{k_2}})t } \phi^{(i)}_{l_1} (x_i)  \phi^{(k_2)}_{n_{k_2}}(x_{k_2}) -\beta \sum_{l_1=1}^m q^{(i)}_{l_1} \sum_{l_2=1}^m u^{(n_{i},l_1)}_{l_2} e^{(\lambda^{(i)}_{l_2}+\lambda^{(k_2)}_{n_{k_2}})t}\left(\phi^{(i)}_{l_2}(x_{i}) \phi^{(k_2)}_{n_{k_2}}(x_{k_2})\right)
    \end{align*}
    Therefore,
    \begin{align*}
       b_l^{(i)}(t) = & \sum_{l_1=1}^m v_{l_1}^{(n_i)} e^{(\lambda^{(i)}_{l_1} + \lambda^{(k_2)}_{n_{k_2}})t } \int \rho_0(x)\phi^{(i)}_{l_1} (x_i)  \phi^{(k_2)}_{n_{k_2}}(x_{k_2}) \mathrm{d}x \nonumber\\
       &-\beta \sum_{l_2=1}^m\left(\sum_{l_1=1}^m q^{(i)}_{l_1}  u^{(n_{i},l_1)}_{l_2}\right) e^{(\lambda^{(i)}_{l_2}+\lambda^{(k_2)}_{n_{k_2}})t}\int \rho_0(x)\phi^{(i)}_{l_2}(x_{i}) \phi^{(k_2)}_{n_{k_2}}(x_{k_2}) \mathrm{d}x
    \end{align*}
    \item If $k_2=i$ and $k_2\neq k_1$, the case is equivalent to the second case due to symmetry.  
\end{enumerate}

We summarize the computational complexity involved in computing $A(t)$ and $b^{(i)}(t)$. Note that there are $O(|\mathcal{S}|^2)$ integral terms of the form
\[
\int \rho_0(\bvec{x}) \prod_{j=1}^4 \phi^{(k_j)}_{n_{k_j}}(x_{k_j}) \, \mathrm{d}\bvec{x},
\]
which dominate the cost of the initial Monte Carlo integration step. This results in a computational cost of $O(N d_{\text{b}}^2 d^2 n^4)$. Assume $m=O(n)$. \par 

\par
For the computation of $A(t)$:
\begin{itemize}
    \item \textbf{Case 1}: There are $O(|\mathcal{S}|^2)$ terms, leading to a cost of $O(d_{\text{b}}^2 d^2 n^4)$.
    \item \textbf{Case 2}: The number of relevant dimension index pairs is $O(d_{\text{b}} d d_{\text{b}})$. Taking into account the basis expansion, the total cost is $O(d_{\text{b}} d d_{\text{b}} m n^2) = O(d_{\text{b}}^2 d n^3)$.
    \item \textbf{Case 3}: The total cost is $O(d_{\text{b}} d m^2) = O(d_{\text{b}} d n^2)$.
\end{itemize}
Summing over all cases, the overall computational complexity for $A(t)$ remains dominated by $O(d_{\text{b}}^2 d^2 n^4)$.

\par
For the computation of $b^{(i)}(t)$:
\begin{itemize}
    \item \textbf{Case 1}: The cost is $O(d d_{\text{b}} d n^2 m) = O(d_{\text{b}} d^2 n^3)$.
    \item \textbf{Case 2}: The cost is $O(d d_{\text{b}} n m^2) = O(d_{\text{b}} d n^3)$.
\end{itemize}
As a result, the computational cost for evaluating $b^{(i)}(t)$ is smaller than that of $A(t)$. The total computational cost becomes $O(d_{\text{b}}^2 d^2 n^4)$.

\section{Example for Assumption~\ref{ass:measure}}\label{appendix: example}
In this appendix, we provide two one-dimensional examples and one high-dimensional example to illustrate Assumption~\ref{ass:measure}. \par

\begin{example}
In the case of the Ornstein–Uhlenbeck process, the stationary distribution $\rho_\infty$ is Gaussian. We choose $f_n$ to be the eigenfunctions associated with $\rho_\infty$, which correspond to Hermite polynomials $H_n$. Using this basis, we can expand the following two terms in terms of Hermite polynomials:

\begin{equation*}
       e^{-a^2x^2} = \sum_{i=0}^\infty \frac{(-1)^i a^{2i}}{i!(1+a^2)^{i+\frac 1 2}2^{2i}}H_{2i}(x) 
\end{equation*}
    and
\begin{equation*}
e^{bx}  = e^{\frac{b^2}4}\sum_{i=0}^\infty \frac{b^i}{i!2^i}H_i(x)     
\end{equation*}
for some $a$ and $b$. Besides, the ratio of two Gaussian distribution $\rho_0/\rho_\infty$ is still a Gaussian distribution. Therefore, we can represent it by Hermite polynomials. Besides, when choosing appropriate $\delta$, we have $\sup_{x\in\mathbb R}|\sum_{i=1}^\infty p_i f_i(x)|\leq 1$.\par 
Next we verify the second assumption. For Hermite polynomial, we have $H_i'(x) = iH_{i-1}(x)$. We can also assume $|p_i|\leq \gamma^i$ for $\gamma<1$ according to the previous expansion. Then we get
    \begin{align*}
        \nabla \sum_{i=M}^\infty p_i H_i(x) = \sum_{i=M}^\infty ip_i H_{i-1}(x)
    \end{align*}
and thus
    \begin{align*}
        \int(\nabla\sum_{i=M}^\infty p_i H_i(x))^2\rho_0(x)\mathrm{d}x\leq& C_\rho(1+\delta)\int (\sum_{i=M}^\infty i p_i H_{i-1}(x))^2\rho_{\infty}(x)\mathrm{d}x \nonumber\\
        \leq& C_\rho(1+\delta)\sum_{i,j=M}^\infty ij |p_i||p_j|\int H_{i-1}(x)H_{j-1}(x)\rho_\infty(x) \mathrm{d}x \nonumber\\
        =&C_\rho(1+\delta) \sum_{i=M}^\infty i^2 |p_i|^2
        \leq C_\rho(1+\delta) \sum_{i=M}^\infty i^2\gamma^{2i} \nonumber\\
        \leq & C_\rho(1+\delta)\gamma^{2(M-1)}\left(\frac{(M-1)^2}{2\ln{1/\gamma}} + \frac{M-1}{2{\ln \gamma}^2} + \frac{1}{4{\ln \gamma}^3} \right) =: r(M)
    \end{align*}
We denote the right hand side as $r(M)$. Due to $\gamma<1$, the function converges to zero as $M\rightarrow\infty$.  
\end{example}

\begin{example} 
    In Fourier case defined in the domain $[-L,L]$, then $\rho_\infty$ is a constant. We expand a Gaussian function $\rho_0=\exp(-a^2x^2)$ as the following
    \begin{equation*}
        \rho_0  = C_\rho \rho_\infty(p_0 + \sum_{k=1}^\infty p_k \cos(\frac{\pi k x}{L}))
    \end{equation*}
    where $p_k = \frac{2}{C_\rho}\int_0^L e^{-a^2x^2}\cos(\frac{\pi k x}{L}) \mathrm{d}x \sim C\exp(-\frac{\pi k^2}{4a^2L^2})$ as $k\rightarrow\infty$, we can assume $|p_k|\leq \gamma^k$ for some $\gamma<1$ when $a$ is small. Thus, the condition $\sup_{x\in\mathbb R}| \sum_{i=1}^\infty p_i f_i(x)|\leq 1$ is straightforward. \par 
    Besides, we follow the same logic as in the Hermite polynomial case and obtain
    \begin{align*}
        \int(\nabla\sum_{k=M}^\infty p_k \cos(\frac{\pi k x}{L}))^2\rho_0(x)\mathrm{d}x\leq& C_\rho(1+\delta)\int (\sum_{k=M}^\infty \frac{\pi k}{L} p_k \sin(\frac{\pi k x}{L}))^2\rho_{\infty}(x)\mathrm{d}x \nonumber\\
        \leq& C_\rho(1+\delta)\sum_{k,k'=M}^\infty \frac{\pi^2}{L^2} kk' |p_k||p_{k'}|\int \sin(\frac{\pi k x}{L})\sin(\frac{\pi k' x}{L})\rho_\infty(x) \mathrm{d}x \nonumber\\
        =& C_\rho(1+\delta)\sum_{k=M}^\infty \frac{\pi^2k^2}{L^2}  |p_k|^2
        \leq  C_\rho(1+\delta)\frac{\pi^2}{L^2} \sum_{k=M}^\infty   k^2\gamma^{2k} \nonumber\\
        \leq & C_\rho(1+\delta)\frac{\pi^2}{L^2}\gamma^{2(M-1)}\left(\frac{(M-1)^2}{2\ln{1/\gamma}} + \frac{M-1}{2{\ln \gamma}^2} + \frac{1}{4{\ln \gamma}^3} \right) =: r(M)
    \end{align*}
\end{example}

\begin{example} We are in particular interested in high-dimensional potentials $V$ that exhibit some "nearest-neighbor" interactions, commonly found in physics.
For example let 
    \begin{align*}
        \rho_0 =&\left(\prod_{i=1}^d \mu_i(x_i)\right)\exp\left(\delta \sum_{0<|i-j|\leq d_{\text{b}}}\alpha_{ij}x_ix_j\right) \nonumber\\
        =&\left(\prod_{i=1}^d \mu_i(x_i)\right)\prod_{0<|i-j|\leq d_{\text{b}}}\exp\left({\delta\alpha_{ij}x_ix_j}\right)
    \end{align*}
    where $\mu_i(x_i)$ is the mean-field term of the density and $\exp\left({\delta\alpha_{ij}x_ix_j}\right)$ is the interaction term. 
 
    Suppose $\delta$ is small, which corresponds to a high-temperature scenario. We can expand $\exp({\delta\alpha_{ij}x_ix_j})$ in terms of $\delta$:
    \begin{align*}
        \exp(\delta\alpha_{ij}x_ix_j) = 1 + \sum_{k=1}^\infty \delta^k\frac{\alpha_{ij}^k}{k!}x_i^rx_j^k = 1 + \delta\sum_{k=1}^\infty \delta^{k-1}\frac{\alpha_{ij}^k}{k!}x_i^k x_j^k.
    \end{align*}
    Assume $\{f^{(i)}_{n_i}\}_{n\in\mathbb N}$ are the eigenfunctions associated to mean-field density $\mu_i(x_i)$. Now we can expand $\sum_{k=1}^\infty \delta^{k-1}\frac{\alpha_{ij}^k}{k!}x_i^k x_j^k$ in terms of eigenfunctions and thus, we have
    \begin{align*}
        \exp({\delta \alpha_{ij}x_ix_j}) = &\left(1+\delta\sum_{n_i,n_j=1}^\infty \gamma^{i,j}_{n_i,n_j}(\delta) f^{(i)}_{n_i}(x_i)f^{(j)}_{n_j}(x_j)\right)
    \end{align*}
    where we have that coefficient $\gamma_{n_i,n_j}^{i,j}(\delta)\to 0$ as $\delta\to 0$. 
    
    Let us call $\sum_{n_i,n_j=1}^\infty \gamma^{i,j}_{n_i,n_j}(\delta) f^{(i)}_{n_i}(x_i)f^{(j)}_{n_j}(x_j)=C_2(i,j)$ as a $2$-cluster of pair $(i,j)$. Then 
    \begin{equation*}
        \exp({-\delta \alpha_{ij}x_ix_j}) = 1+\delta C_2(i,j)
    \end{equation*} 
    and thus 
    \begin{equation*}
        \exp\left({\delta\sum_{i,j=1}^d \alpha_{ij}x_ix_j}\right) = \prod_{i,j=1}^d \left(1+\delta C_2(i,j\right)) = 1+\delta\,C_2 +\delta^2C_{>2}
    \end{equation*}
    Where $C_2$ is the sum of all 2-cluster and this is also a 2-cluster, while $C_{>2}$ contains all products of 2-clusters and thus is a higher-order cluster. 
    
    Therefore, we can interpret $\rho_0$ as 2-cluster perturbation of $\rho_\infty$ since
    \begin{equation*}
        \rho_0 = \rho_\infty(1+\delta\,C_2 +\delta^2C_{>2})
    \end{equation*}
    Based on Theorem \ref{thm:General} we can expand the score up to first order in $\delta$ as a 2-cluster function.
\end{example}

\section{Proof of Theorem~\ref{thm:Fourier}}\label{appendix: proof}

In order to proof the theorem we need the following lemma.
\begin{lemma}\label{lem:ApproInx}
Denote Fourier basis $f_k(x) = e^{\sqrt{-1}wkx}$. For $\varepsilon>0$ there is $M$ and $g\in \mathrm{span}\{f_{-M},\dots,f_M\}$ such that 
    \begin{align*}
    \left|\frac{1}{\displaystyle1+\delta \sum_{i=-\infty}^\infty p_i e^{\lambda_i t}f_i(x)}\,-  (1+g(x))\right|\leq \varepsilon e^{\lambda_1{t}}
\end{align*}
where we can choose $M\in\mathbb N$ to satisfy
    \begin{align*}
        \gamma^{M}\leq \frac \varepsilon{8\frac{\log(\frac{\varepsilon}2 (1-\delta))}{\log\delta}} \frac{(1-\gamma)(1-\delta)}{\delta^2(1-\delta^{\frac{\log(\frac{\varepsilon}2 (1-\delta))}{\log\delta}})}
    \end{align*}
as well as \begin{align*}
    (2M)^{\frac{\log(\frac{\varepsilon}2 (1-\delta))}{\log\delta}}\gamma^M\leq \frac{\varepsilon}{8(1+\frac{\log(\frac{\varepsilon}2 (1-\delta))}{\log\delta})}.
\end{align*}
\end{lemma}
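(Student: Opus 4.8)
Write $y(t,x):=\delta\sum_{i}p_ie^{\lambda_i t}f_i(x)$, so that the object to be approximated is $\tfrac{1}{1+y(t,x)}$. The idea is to expand this in its Neumann series, truncate it in two independent ways, and use the multiplicative closure of the Fourier basis, $f_af_b=f_{a+b}$. Before that I would record two facts. First, the zero mode must be absent from $y$: were $p_0\neq 0$, the left-hand side of the lemma would converge to the nonzero constant $(1+\delta p_0)^{-1}-1$ as $t\to\infty$ while the right-hand side $\varepsilon e^{\lambda_1 t}$ vanishes; hence every index $i$ occurring in $y$ has $\lambda_i\le\lambda_1<0$, and consequently any monomial $f_{i_1+\cdots+i_k}$ appearing in the expansion of $y^k$ ($k\ge 1$) carries the time factor $e^{(\lambda_{i_1}+\cdots+\lambda_{i_k})t}\le e^{k\lambda_1 t}\le e^{\lambda_1 t}$ for $t\ge 0$. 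Second, combining Assumption~\ref{ass:measure} with $|p_i|\le\gamma^{|i|}$ yields $|y(t,x)|\le\delta e^{\lambda_1 t}<1$, so $\frac{1}{1+y}=1+\sum_{k\ge1}(-1)^ky^k$ converges absolutely with constant term exactly $1$.

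Now fix truncation parameters $K,M\in\mathbb N$. Expanding $y^k=\delta^k\sum_{i_1,\dots,i_k}p_{i_1}\cdots p_{i_k}e^{(\lambda_{i_1}+\cdots+\lambda_{i_k})t}f_{i_1+\cdots+i_k}$, I discard all terms of order $k>K$ and, in each retained $y^k$, all Fourier modes $f_m$ with $|m|>M$; call the surviving trigonometric polynomial $P_{K,M}(t,x)$ and set $g:=P_{K,M}-1$, which for each $t$ is a trigonometric polynomial of degree at most $M$, i.e. an element of $\mathrm{span}\{f_{-M},\dots,f_M\}$ (truncation only removes modes, and the order-$0$ term $1$ is unaffected since $0\le M$). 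Writing $[\,\cdot\,]_{\le M}$ for the projection onto modes of absolute value at most $M$, the error decomposes as
\begin{equation*}
\frac{1}{1+y}-(1+g)=\underbrace{\sum_{k>K}(-1)^ky^k}_{(\mathrm I)}+\underbrace{\sum_{k=1}^{K}(-1)^k\bigl(y^k-[y^k]_{\le M}\bigr)}_{(\mathrm{II})}.
\end{equation*}
For $(\mathrm I)$ one has $|(\mathrm I)|\le\frac{|y|^{K+1}}{1-|y|}\le\frac{\delta^{K+1}}{1-\delta}e^{\lambda_1 t}$, so demanding $\frac{\delta^{K+1}}{1-\delta}\le\tfrac\varepsilon2$ forces $K$ of order $\frac{\log(\tfrac\varepsilon2(1-\delta))}{\log\delta}$ --- exactly the exponent that appears in the two stated bounds on $M$.

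For $(\mathrm{II})$ I would estimate, using $|p_{i_1}\cdots p_{i_k}|\le\gamma^{|i_1|+\cdots+|i_k|}$ together with the time factor above,
\begin{equation*}
\bigl|y^k-[y^k]_{\le M}\bigr|\le\delta^k e^{\lambda_1 t}\sum_{|i_1+\cdots+i_k|>M}\gamma^{|i_1|+\cdots+|i_k|},
\end{equation*}
and split the right-hand sum into (a) tuples in which some $|i_j|>M$, whose contribution is $\lesssim\delta^k\gamma^{M}$ times a constant depending only on $\gamma$, and (b) tuples with all $|i_j|\le M$ but $|i_1+\cdots+i_k|>M$, where $\sum_j|i_j|>M$ still forces $\gamma^{\sum_j|i_j|}\le\gamma^M$ while the number of integer $k$-tuples with a given $\ell_1$-size $S$ is at most $2^k\binom{S+k-1}{k-1}$, so this part is $\lesssim(2M)^k\gamma^M$. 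Summing (a) over $1\le k\le K$ produces a geometric-type sum in $\delta$ whose closed form contains the factor $\delta^2(1-\delta^K)/(1-\delta)$, and summing (b) over $k$ --- the term $k=K$ dominating --- contributes the $(2M)^K\gamma^M$; matching each of these against $\varepsilon e^{\lambda_1 t}$ (with the numerical constants bookkept, a further factor coming from the $\pm$ symmetry of the Fourier indices) reproduces precisely the two inequalities on $M$ in the statement, and since $\gamma<1$ both can be met by taking $M$ large enough.

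The step I expect to be the real work is $(\mathrm{II})$(b): obtaining a clean and correct count of the integer $k$-tuples with prescribed $\ell_1$-norm whose coordinate sum has modulus exceeding $M$, and then pushing all the $\delta$-, $\gamma$-, $K$- and $M$-dependent constants through the nested summation so that they collapse to the exact closed-form thresholds quoted in the lemma. The $t$-dependence, by contrast, is harmless once the common factor $e^{\lambda_1 t}$ has been extracted uniformly, which is precisely why the absence of the zero mode and the inequality $e^{(\lambda_{i_1}+\cdots+\lambda_{i_k})t}\le e^{\lambda_1 t}$ are established at the very beginning.
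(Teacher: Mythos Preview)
Your proposal is correct and follows essentially the same route as the paper: Neumann-series truncation of $(1+y)^{-1}$ at order $K$ (the paper's $N_1$), followed by a Fourier-mode truncation at level $M$ (the paper's $N_2$), with the multiplicative closure $f_af_b=f_{a+b}$ doing the work of keeping the approximant in the right span. The only organisational difference is that the paper first replaces each factor in $y^k$ by its restriction to indices in $[-N_2,N_2]$ (bounding the resulting ``mixed terms'' via a binomial expansion) and \emph{then} projects the truncated polynomial onto modes $|m|\le N_2$, whereas you project $y^k$ directly and split the discarded modes according to whether some individual index $|i_j|$ exceeds $M$. The two decompositions carve up the same error into overlapping but not identical pieces; the paper's ordering has the advantage that its intermediate bound for the mixed terms uses $\bigl(\sum_i\delta|p_i|\bigr)^{k-1}$ rather than your $\bigl(\tfrac{1+\gamma}{1-\gamma}\bigr)^{k-1}$, which is what produces the clean factor $\delta^2(1-\delta^{K})/(1-\delta)$ appearing in the first displayed condition on $M$. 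Your counting in (II)(b) via the $\ell_1$-ball is a valid alternative to the paper's direct multinomial bound $\sum\binom{k}{j_{-N_2}\cdots j_{N_2}}\le(2N_2)^k$, and you are right that this is where the bookkeeping is heaviest.
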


We put the proof of this lemma in the end. Now we go to Proof of Theorem \ref{thm:Fourier} first. 
\begin{proof}[Proof of Theorem \ref{thm:Fourier}] In the one-dimensional case, 
\begin{align*}
    \partial_x\log\rho_t(x)  =& \partial_x  \log(1+\delta\sum_{i=1}^\infty p_ie^{\lambda_i t} f_i(x))\nonumber\\
    =& \frac{  \delta\sum_{i=1}^\infty p_ie^{\lambda_i t}  f'_i(x)}{1+\delta\sum_{i=1}^\infty p_ie^{\lambda_i t} f_i(x)}
\end{align*}
Therefore, for some $\highds_{2M}\in F_{2M}$, we have
    \begin{align*}
       & \int \frac{1}{2}(\partial_x\log\rho_t(x)-\highds_{2M}(t,x))^2\rho_{t}(x)\mathrm{d}x = 
       \int (\frac{  \delta\sum_{i=1}^\infty p_ie^{\lambda_i t}  f'_i(x)}{1+\delta\sum_{i=1}^\infty p_ie^{\lambda_i t} f_i(x)}-\highds_{2M}(t,x))^2\rho_{t}(x)\mathrm{d}x \nonumber\\
       & =  \int \frac{1}{2}(\frac{  \delta\sum_{i=M+1}^\infty p_ie^{\lambda_i t}  f'_i(x)}{1+\delta\sum_{i=1}^\infty p_ie^{\lambda_i t} f_i(x)} + \frac{  \delta\sum_{i=1}^M p_ie^{\lambda_i t}  f'_i(x)}{1+\delta\sum_{i=1}^\infty p_ie^{\lambda_i t} f_i(x)}-\highds_{2M}(t,x))^2\rho_{t}(x)\mathrm{d}x \nonumber\\
       & \leq \int (\frac{  \delta\sum_{i=M+1}^\infty p_ie^{\lambda_i t}  f'_i(x)}{1+\delta\sum_{i=1}^\infty p_ie^{\lambda_i t} f_i(x)})^2\rho_{t}(x) + (\frac{  \delta\sum_{i=1}^M p_ie^{\lambda_i t}  f'_i(x)}{1+\delta\sum_{i=1}^\infty p_ie^{\lambda_i t} f_i(x)}-\highds_{2M}(t,x))^2\rho_{t}(x)\mathrm{d}x 
    \end{align*}
    For the first term, note that $(\frac{1}{1+\delta\sum_{i=1}^\infty p_ie^{\lambda_i t} f_i(x)})^2\leq \frac{1}{(1-\delta)^2}$ and  \begin{equation*}
        \| \sum_{i=M+1}^\infty p_i f'_i(x)\|^2_{L^2(\rho_0)}\leq r(M+1)
    \end{equation*}
     shown in the assumption~\ref{ass:measure}. Thus, there is a $M$ such that 
     \begin{equation*}
         (\frac{\delta}{1-\delta})^2\|\sum_{i={M+1}}^\infty p_i f'_i(x)\|^2_{L^2(\rho_0)} \leq \frac{\varepsilon}{2}.
     \end{equation*}
     Therefore,
     \begin{equation*}
         \int (\frac{  \delta\sum_{i=M+1}^\infty p_ie^{\lambda_i t}  f'_i(x)}{1+\delta\sum_{i=1}^\infty p_ie^{\lambda_i t} f_i(x)})^2\rho_{t}(x) \mathrm{d}x \leq \frac{\epsilon}{2}e^{2\lambda_{M+1}t}
     \end{equation*}

     For the second term, we follow Lemma \ref{lem:ApproInx} choosing $\tilde\varepsilon=\frac{\varepsilon}{2\|\sum_{i=1}^\infty p_if'_i\|^2_{L^2(\rho_0)}}$ we have that there is $\tilde{s}_M\in\mathrm{span}\{f_{-M},\dots,f_M\}$ such that
    \begin{align*}
        \left|\frac{\delta\sum_{i=1}^M p_ie^{\lambda_it}f'_i(x)}{1+\delta \sum_{i=1}^\infty p_i e^{\lambda_it} f_i(x)}- (\delta\sum_{i=1}^M p_ie^{\lambda_it}f'_i(x))(1+\tilde{s}_M(t,x))\right|\leq \frac{\varepsilon}{2}e^{\lambda_1t}.
    \end{align*}
    By observing that $f_n'(x) = \sqrt{-1}n\omega f_n(x)$ and $f_n(x)f_m(x)=f_{n+m}(x)$ we can expand 
    \begin{equation*}
        (\delta\sum_{i=1}^Mp_ie^{\lambda_i t}f'_i(x))(1+\tilde{s}_M(t,x)):=\highds_{2M}(t,x)
    \end{equation*} 
    in Fourier basis from $-2M$ to $2M$. Denote it as $\highds_{2M}(t,x)$. Therefore, 
    \begin{equation*}
        \int  (\frac{  \delta\sum_{i=1}^M p_ie^{\lambda_i t}  f'_i(x)}{1+\delta\sum_{i=1}^\infty p_ie^{\lambda_i t} f_i(x)}-\highds_{2M}(t,x))^2\rho_{t}(x)\mathrm{d}x  \leq \frac{\varepsilon}{2}e^{\lambda_1t}
    \end{equation*}
In the end, 
\begin{equation*}
    \int \frac{1}{2}(\partial_x\log\rho_t(x)-\highds_{2M}(t,x))^2\rho_{t}(x)\mathrm{d}x \leq \frac{\epsilon}{2}e^{2\lambda_{M+1}t} +\frac{\varepsilon}{2}e^{\lambda_1t} \leq \varepsilon e^{\lambda_1t}
\end{equation*}
and
\begin{equation*}
    \int_0^\infty \int \frac{1}{2}(\partial_x\log\rho_t(x)-\highds_{2M}(t,x))^2\rho_{t}(x)\mathrm{d}x \mathrm{d}t\leq \frac{\varepsilon}{|\lambda_1|}
\end{equation*}

\end{proof}

\begin{proof}[Proof of Lemma \ref{lem:ApproInx}]
\label{proof:LemApproxInx}
To ease the notation we assume $p_0=0$ according to our assumption. Let $\varepsilon>0$. The proof of this theorem is based is on a more refined analysis of the expansion of \begin{equation*}
    \frac{1}{1+\sum_{i=-\infty}^\infty p_i e^{\lambda_i t}f_i(x)}.
\end{equation*}

Recall that $\frac 1{1+x} = \sum_{k=0}^\infty (-1)^k x^k$ for $|x|<1$. Thus, $\sum_{k=N_1}^\infty |x|^k\leq \frac{|x|^{N_1}}{1-|x|}.$
With the same arguments as in the proof of Theorem \ref{thm:General} we can choose $N_1$ such that
Let $N_1$ be such that $$\frac{\delta^{N_1+1}}{1-\delta}\leq \frac\varepsilon 2e^{\lambda_1t}$$

We can split
\begin{align*}
    &\frac{1}{\displaystyle1+\delta \sum_{i=-\infty}^\infty p_i e^{\lambda_i t}f_i(x)} \nonumber\\
    =& \sum_{k=0}^{N_1} (-1)^k\left(\delta\sum_{i=-\infty}^\infty p_ie^{\lambda_i t} f_i(x) \right)^k+\sum_{k=N_1+1}^{\infty} (-1)^k\left(\delta\sum_{i=-\infty}^\infty p_ie^{\lambda_i t} f_i(x) \right)^k
\end{align*}

Then, 

\begin{align*}
    &\sum_{k=1}^{N_1} (-1)^k\left(\delta\sum_{i=\infty}^\infty p_ie^{\lambda_i t} f_i(x) \right)^k\\
    =& \sum_{k=1}^{N_1} (-1)^k \sum_{r=0}^k\begin{pmatrix}
        k\\r
    \end{pmatrix}\left( \sum_{i=-N_2}^{N_2}\delta e^{\lambda_it} p_i f_i(x)\right)^{k-r}\left( \sum_{i\in \mathbb N\setminus [-N_2,N_2]}\delta e^{\lambda_it}p_if_i(x)\right)^{r}\\
    =& \sum_{k=1}^{N_1} (-1)^k \left( \sum_{i=-N_2}^{N_2}\delta e^{\lambda_it} p_i f_i(x)\right)^{k}\\
    &+ \sum_{k=1}^{N_1} (-1)^k \sum_{r=1}^k\begin{pmatrix}
        k\\r
    \end{pmatrix}\left( \sum_{i=-N_2}^{N_2}\delta e^{\lambda_it} p_i f_i(x)\right)^{k-r}\left( \sum_{i\in \mathbb N\setminus [-N_2,N_2]}\delta e^{\lambda_it}p_i f_i(x)\right)^{r} \\
    \end{align*}
Observe
\begin{align*}
    &\sum_{k=1}^{N_1} (-1)^k \sum_{r=1}^k\begin{pmatrix}
        k\\r
    \end{pmatrix}\left( \sum_{i=-N_2}^{N_2}\delta e^{\lambda_it}p_i f_i(x)\right)^{k-r}\left( \sum_{i\in \mathbb N\setminus [-N_2,N_2]}\delta e^{\lambda_it}p_i f_i(x)\right)^{r}\\
    =&\left( \sum_{i\in \mathbb N\setminus [-N_2,N_2]}\delta e^{\lambda_it}p_i f_i(x)\right)\cdot\\
    &\qquad\sum_{k=1}^{N_1} (-1)^k \sum_{r=1}^k\begin{pmatrix}
        k\\r
    \end{pmatrix}\left( \sum_{i=-N_2}^{N_2}\delta e^{\lambda_it}p_i f_i(x)\right)^{k-r}\left( \sum_{i\in \mathbb N\setminus [-N_2,N_2]}\delta e^{\lambda_it}p_i f_i(x)\right)^{r-1}\\
    =&\left( \sum_{i\in \mathbb N\setminus [-N_2,N_2]}\delta e^{\lambda_it}p_i f_i(x)\right)\cdot\\
    &\qquad\sum_{k=1}^{N_1} (-1)^k \sum_{r=0}^{k-1}\begin{pmatrix}
        k\\r+1
    \end{pmatrix}\left( \sum_{i=-N_2}^{N_2}\delta e^{\lambda_it}p_i f_i(x)\right)^{k-1-r}\left( \sum_{i\in \mathbb N\setminus [-N_2,N_2]}\delta e^{\lambda_it}p_i f_i(x)\right)^{r}\\
        =&\left( \sum_{i\in \mathbb N\setminus [-N_2,N_2]}\delta e^{\lambda_it}p_i f_i(x)\right)\cdot\\
        &\qquad\sum_{k=1}^{N_1} (-1)^k \sum_{r=0}^{k-1}\frac{k}{r+1}\begin{pmatrix}
    k-1\\r
\end{pmatrix}\left( \sum_{i=-N_2}^{N_2}\delta e^{\lambda_it}p_i f_i(x)\right)^{k-1-r}\left( \sum_{i\in \mathbb N\setminus [-N_2,N_2]}\delta e^{\lambda_it}p_i f_i(x)\right)^{r}\\
\end{align*}
using
$\begin{pmatrix}
    k\\r+1
\end{pmatrix} =\frac{k}{r+1}\begin{pmatrix}
    k-1\\r
\end{pmatrix}$
and thus
\begin{align*}
    &|\sum_{k=1}^{N_1} (-1)^k \sum_{r=1}^k\begin{pmatrix}
        k\\r
    \end{pmatrix}\left( \sum_{i=-N_2}^{N_2}\delta e^{\lambda_it}p_i f_i(x)\right)^{k-r}\left( \sum_{i\in \mathbb N\setminus [-N_2,N_2]}\delta e^{\lambda_it}p_i f_i(x)\right)^{r}|\\
    \leq &|\sum_{i\in \mathbb N\setminus [-N_2,N_2]}\delta e^{\lambda_it}p_i f_i(x)|\sum_{k=1}^{N_1}  \sum_{r=0}^{k-1}\frac{k}{r+1}\begin{pmatrix}
    k-1\\r
\end{pmatrix}\left( \sum_{i=-N_2}^{N_2}\delta |p_i|\right)^{k-1-r}\left( \sum_{i\in \mathbb N\setminus [-N_2,N_2]}\delta |p_i| \right)^{r}\\
\leq& N_1 |\sum_{i\in \mathbb N\setminus [-N_2,N_2]}\delta e^{\lambda_it}p_i f_i(x)|\sum_{k=1}^{N_1}  \left( \sum_{i=-\infty}^{\infty}\delta |p_i|\right)^{k}\\
\leq & N_1  \frac{\delta(1-\delta^{N_1})}{1-\delta}|\sum_{i\in \mathbb N\setminus [-N_2,N_2]}\delta e^{\lambda_it}p_i f_i(x)|\\
\leq & N_1 e^{\lambda_1t}\frac{\delta^2(1-\delta^{N_1})}{1-\delta}\sum_{i\in \mathbb N\setminus [-N_2,N_2]} | p_i|\\
\leq &2N_1 e^{\lambda_1t}\frac{\delta^2(1-\delta^{N_1})}{1-\delta} \sum_{i=N_2}^\infty |p_i|\\
\leq &2N_1 e^{\lambda_1t}\frac{\delta^2(1-\delta^{N_1})}{1-\delta} \frac{\gamma^{N_2}}{1-\gamma}
\end{align*}
    
 Furthermore,   
    \begin{align*}
    &\sum_{k=1}^{N_1} (-1)^k \left( \sum_{i=-N_2}^{N_2}\delta e^{\lambda_it}p_i f_i(x)\right)^{k}\\
     =&\sum_{k=1}^{N_1}((-\delta)^k \sum_{\sum_{-N_2}^{N_2}j_i=k}^{N_2} \begin{pmatrix}
         k\\j_{-N_2}\dots j_{N_2}
     \end{pmatrix} \Pi_{i=-N_2}^{N_2}  e^{j_i\lambda_it}p_i^{j_i}  f_{\sum_{i=-N_2}^{N_2} j_i\cdot i}(x)\\
          =&\sum_{k=1}^{N_1}((-\delta)^k \sum_{m=-N_2k}^{N_2k}\sum_{\substack{\sum_{i=-N_2}^{N_2}
           j_i=k\\ \sum_{i=-N_2}^{N_2}
           j_i\cdot i=m}}\begin{pmatrix}
         k\\ j_{-N_2}\dots j_{N_2}
     \end{pmatrix}\Pi_{i=-N_2}^{N_2} e^{j_i\lambda_it} p_i^{j_i}  f_{\sum_{i=-N_2}^{N_2} j_i\cdot i}(x)\\
     =&\sum_{k=1}^{N_1}((-\delta)^k \sum_{m=-N_2k}^{N_2k}f_m(x)\sum_{\substack{\sum_{i=-N_2}^{N_2}
           j_i=k\\ \sum_{i=-N_2}^{N_2}
           j_i\cdot i=m}}\begin{pmatrix}
         k\\ j_{-N_2}\dots j_{N_2}
     \end{pmatrix}\Pi_{i=-N_2}^{N_2}  e^{j_i\lambda_it}p_i^{j_i}  \\
          =& \sum_{m=-N_2}^{N_2} f_m(x)\sum_{k=1}^{N_1}((-\delta)^k\sum_{\substack{\sum_{i=-N_2}^{N_2} j_i=k\\ \sum_{i=-N_2}^{N_2} j_i\, i=m}} \begin{pmatrix}
         k\\ j_{-N_2}\dots j_{N_2}
     \end{pmatrix}\Pi_{i=-N_2}^{N_2} e^{j_i\lambda_it} p_i^{j_i} \\
     &\quad + \sum_{j=2}^{N_1} \sum_{m\in [-jN_2,jN_2]\setminus[-(j-1)N_2,(j-1)N_2]}f_m(x)\\
     &\qquad\quad \sum_{k=j}^{N_1} (-\delta)^k \sum_{\substack{\sum_{i=-N_2}^{N_2} j_i=k\\ \sum_{i=-N_2}^{N_2} j_i\, i=m}} \begin{pmatrix}
         k\\ j_{-N_2}\dots j_{N_2}
     \end{pmatrix}\Pi_{i=-N_2}^{N_2}  e^{j_i\lambda_it}p_i^{j_i} 
\end{align*}
One can observe that
\begin{align*}
    &|\sum_{j=2}^{N_1} \sum_{m\in [-jN_2,jN_2]\setminus[-(j-1)N_2,(j-1)N_2]}f_m(x)\sum_{k=j}^{N_1} (-\delta)^k \sum_{\substack{\sum_{i=-N_2}^{N_2} j_i=k\\ \sum_{i=-N_2}^{N_2} j_i\, i=m}} \begin{pmatrix}
         k\\ j_{-N_2}\dots j_{N_2}
     \end{pmatrix}\Pi_{i=-N_2}^{N_2}  e^{j_i\lambda_it}p_i^{j_i}|\\
     \leq &\sum_{j=2}^{N_1} \sum_{m\in [-jN_2,jN_2]\setminus[-(j-1)N_2,(j-1)N_2]}\sum_{k=j}^{N_1} \sum_{\substack{\sum_{i=-N_2}^{N_2} j_i=k\\ \sum_{i=-N_2}^{N_2} j_i\, i=m}}  \begin{pmatrix}
         k\\ j_{-N_2}\dots j_{N_2}
     \end{pmatrix}\Pi_{i=-N_2}^{N_2}  e^{j_i\lambda_it}\gamma^{j_i\cdot i}\\
      = &\sum_{j=2}^{N_1} \sum_{m\in [-jN_2,jN_2]\setminus[-(j-1)N_2,(j-1)N_2]}\gamma^m e^{-\lambda_mt}\sum_{k=j}^{N_1} \sum_{\substack{\sum_{i=-N_2}^{N_2} j_i=k\\ \sum_{i=-N_2}^{N_2} j_i\, i=m}}  \begin{pmatrix}
         k\\ j_{-N_2}\dots j_{N_2} 
     \end{pmatrix}\\
     \leq& 2e^{\lambda_1t}\sum_{j=2}^{N_1}\sum_{m=(j-1)N_2}^{jN_2} \gamma^m \sum_{k=j}^{N_1}\sum_{\substack{\sum_{i=-N_2}^{N_2} j_i=k\\ \sum_{i=-N_2}^{N_2} j_i\, i=m}}  \begin{pmatrix}
         k\\ j_{-N_2}\dots j_{N_2} 
     \end{pmatrix}\\
     \leq &  2e^{\lambda_1t}\sum_{j=2}^{N_1}\sum_{(j-1)N_2}^{jN_2} \gamma^m \sum_{k=j}^{N_1} (2N_2)^k\\
     =&  2e^{\lambda_1t}\sum_{j=2}^{N_1}\sum_{(j-1)N_2}^{jN_2} \gamma^m \frac{(2N_2)^{N_1}-(2N_2)^j}{2N_2-1}\\
     =&2 e^{\lambda_1t}\sum_{j=2}^{N_1}\frac{(2N_2)^{N_1}-(2N_2)^j}{2N_2-1}\frac{\gamma^{jN_2+1}-\gamma^{(j-1)N_2}}{\gamma-1}\\
     =&e^{\lambda_1t}\frac{2(1+N_1)}{(2N_2-1)} \left[ (2N_2)^{N_1}\gamma^{N_2}\frac{ \gamma^{N_2+1}-1}{\gamma-1}\frac{\gamma^{(N_1-2)N_2+1}-1}{\gamma^{N_2}-1}\right]\\
     \leq& e^{\lambda_1t}\frac{2(1+N_1)}{(2N_2-1)} \left[ (2N_2)^{N_1}\gamma^{N_2}\right]
\end{align*}

Now choose $N_2$ such that 
$$2N_1 \frac{\delta^2(1-\delta^{N_1})}{1-\delta} \frac{\gamma^{N_2}}{1-\gamma}\leq \frac{\varepsilon}4$$  as well as $$\frac{2(1+N_1)}{(2N_2-1)} \left[ (2N_2)^{N_1}\gamma^{N_2}\right]\leq\frac{\varepsilon}4.$$
Define $\Pi_{N_2} = 1+\sum_{m=-N_2}^{N_2} f_m(x)\sum_{k=1}^{N_1}((-\delta)^k\sum_{\substack{\sum_{i=-N_2}^{N_2} j_i=k\\ \sum_{i=-N_2}^{N_2} j_i\, i=m}} \begin{pmatrix}
         k\\ j_{-N_2}\dots j_{N_2}
     \end{pmatrix}\Pi_{i=-N_2}^{N_2} e^{j_i\lambda_it} p_i^{j_i}$
Then
\begin{align*}
    \left|\frac{1}{\displaystyle1+\delta \sum_{i=-\infty}^\infty p_i e^{\lambda_i t}f_i(x)}\,-  \Pi_{N_2}\right|\leq \varepsilon e^{\lambda_1t}
\end{align*}
\end{proof}

\end{appendices}

\bibliography{sn-bibliography}


\begin{thebibliography}{34}
\ifx \bisbn   \undefined \def \bisbn  #1{ISBN #1}\fi
\ifx \binits  \undefined \def \binits#1{#1}\fi
\ifx \bauthor  \undefined \def \bauthor#1{#1}\fi
\ifx \batitle  \undefined \def \batitle#1{#1}\fi
\ifx \bjtitle  \undefined \def \bjtitle#1{#1}\fi
\ifx \bvolume  \undefined \def \bvolume#1{\textbf{#1}}\fi
\ifx \byear  \undefined \def \byear#1{#1}\fi
\ifx \bissue  \undefined \def \bissue#1{#1}\fi
\ifx \bfpage  \undefined \def \bfpage#1{#1}\fi
\ifx \blpage  \undefined \def \blpage #1{#1}\fi
\ifx \burl  \undefined \def \burl#1{\textsf{#1}}\fi
\ifx \doiurl  \undefined \def \doiurl#1{\url{https://doi.org/#1}}\fi
\ifx \betal  \undefined \def \betal{\textit{et al.}}\fi
\ifx \binstitute  \undefined \def \binstitute#1{#1}\fi
\ifx \binstitutionaled  \undefined \def \binstitutionaled#1{#1}\fi
\ifx \bctitle  \undefined \def \bctitle#1{#1}\fi
\ifx \beditor  \undefined \def \beditor#1{#1}\fi
\ifx \bpublisher  \undefined \def \bpublisher#1{#1}\fi
\ifx \bbtitle  \undefined \def \bbtitle#1{#1}\fi
\ifx \bedition  \undefined \def \bedition#1{#1}\fi
\ifx \bseriesno  \undefined \def \bseriesno#1{#1}\fi
\ifx \blocation  \undefined \def \blocation#1{#1}\fi
\ifx \bsertitle  \undefined \def \bsertitle#1{#1}\fi
\ifx \bsnm \undefined \def \bsnm#1{#1}\fi
\ifx \bsuffix \undefined \def \bsuffix#1{#1}\fi
\ifx \bparticle \undefined \def \bparticle#1{#1}\fi
\ifx \barticle \undefined \def \barticle#1{#1}\fi
\bibcommenthead
\ifx \bconfdate \undefined \def \bconfdate #1{#1}\fi
\ifx \botherref \undefined \def \botherref #1{#1}\fi
\ifx \url \undefined \def \url#1{\textsf{#1}}\fi
\ifx \bchapter \undefined \def \bchapter#1{#1}\fi
\ifx \bbook \undefined \def \bbook#1{#1}\fi
\ifx \bcomment \undefined \def \bcomment#1{#1}\fi
\ifx \oauthor \undefined \def \oauthor#1{#1}\fi
\ifx \citeauthoryear \undefined \def \citeauthoryear#1{#1}\fi
\ifx \endbibitem  \undefined \def \endbibitem {}\fi
\ifx \bconflocation  \undefined \def \bconflocation#1{#1}\fi
\ifx \arxivurl  \undefined \def \arxivurl#1{\textsf{#1}}\fi
\csname PreBibitemsHook\endcsname

\bibitem[\protect\citeauthoryear{Hyv{\"a}rinen and Dayan}{2005}]{hyvarinen2005estimation}
\begin{botherref}
\oauthor{\bsnm{Hyv{\"a}rinen}, \binits{A.}},
\oauthor{\bsnm{Dayan}, \binits{P.}}:
Estimation of non-normalized statistical models by score matching.
Journal of Machine Learning Research
\textbf{6}(4)
(2005)
\end{botherref}
\endbibitem

\bibitem[\protect\citeauthoryear{Anderson}{1982}]{anderson1982reverse}
\begin{barticle}
\bauthor{\bsnm{Anderson}, \binits{B.D.O.}}:
\batitle{Reverse-time diffusion equation models}.
\bjtitle{Stochastic Processes and their Applications}
\bvolume{12}(\bissue{3}),
\bfpage{313}--\blpage{326}
(\byear{1982})
\end{barticle}
\endbibitem

\bibitem[\protect\citeauthoryear{Song and Ermon}{2019}]{song2019generative}
\begin{botherref}
\oauthor{\bsnm{Song}, \binits{Y.}},
\oauthor{\bsnm{Ermon}, \binits{S.}}:
Generative modeling by estimating gradients of the data distribution.
Advances in neural information processing systems
\textbf{32}
(2019)
\end{botherref}
\endbibitem

\bibitem[\protect\citeauthoryear{Dathathri et~al.}{2019}]{dathathri2019plug}
\begin{botherref}
\oauthor{\bsnm{Dathathri}, \binits{S.}},
\oauthor{\bsnm{Madotto}, \binits{A.}},
\oauthor{\bsnm{Lan}, \binits{J.}},
\oauthor{\bsnm{Hung}, \binits{J.}},
\oauthor{\bsnm{Frank}, \binits{E.}},
\oauthor{\bsnm{Molino}, \binits{P.}},
\oauthor{\bsnm{Yosinski}, \binits{J.}},
\oauthor{\bsnm{Liu}, \binits{R.}}:
Plug and play language models: A simple approach to controlled text generation.
arXiv preprint arXiv:1912.02164
(2019)
\end{botherref}
\endbibitem

\bibitem[\protect\citeauthoryear{Song and Ermon}{2020}]{song2020improved}
\begin{barticle}
\bauthor{\bsnm{Song}, \binits{Y.}},
\bauthor{\bsnm{Ermon}, \binits{S.}}:
\batitle{Improved techniques for training score-based generative models}.
\bjtitle{Advances in neural information processing systems}
\bvolume{33},
\bfpage{12438}--\blpage{12448}
(\byear{2020})
\end{barticle}
\endbibitem

\bibitem[\protect\citeauthoryear{Song et~al.}{2020}]{song2020score}
\begin{botherref}
\oauthor{\bsnm{Song}, \binits{Y.}},
\oauthor{\bsnm{Sohl-Dickstein}, \binits{J.}},
\oauthor{\bsnm{Kingma}, \binits{D.P.}},
\oauthor{\bsnm{Kumar}, \binits{A.}},
\oauthor{\bsnm{Ermon}, \binits{S.}},
\oauthor{\bsnm{Poole}, \binits{B.}}:
Score-based generative modeling through stochastic differential equations.
arXiv preprint arXiv:2011.13456
(2020)
\end{botherref}
\endbibitem

\bibitem[\protect\citeauthoryear{Meng et~al.}{2021}]{meng2021sdedit}
\begin{botherref}
\oauthor{\bsnm{Meng}, \binits{C.}},
\oauthor{\bsnm{He}, \binits{Y.}},
\oauthor{\bsnm{Song}, \binits{Y.}},
\oauthor{\bsnm{Song}, \binits{J.}},
\oauthor{\bsnm{Wu}, \binits{J.}},
\oauthor{\bsnm{Zhu}, \binits{J.-Y.}},
\oauthor{\bsnm{Ermon}, \binits{S.}}:
Sdedit: Guided image synthesis and editing with stochastic differential equations.
arXiv preprint arXiv:2108.01073
(2021)
\end{botherref}
\endbibitem

\bibitem[\protect\citeauthoryear{Lee et~al.}{2022}]{lee2022convergence}
\begin{barticle}
\bauthor{\bsnm{Lee}, \binits{H.}},
\bauthor{\bsnm{Lu}, \binits{J.}},
\bauthor{\bsnm{Tan}, \binits{Y.}}:
\batitle{Convergence for score-based generative modeling with polynomial complexity}.
\bjtitle{Advances in Neural Information Processing Systems}
\bvolume{35},
\bfpage{22870}--\blpage{22882}
(\byear{2022})
\end{barticle}
\endbibitem

\bibitem[\protect\citeauthoryear{Lee et~al.}{2023}]{lee2023convergence}
\begin{bchapter}
\bauthor{\bsnm{Lee}, \binits{H.}},
\bauthor{\bsnm{Lu}, \binits{J.}},
\bauthor{\bsnm{Tan}, \binits{Y.}}:
\bctitle{Convergence of score-based generative modeling for general data distributions}.
In: \bbtitle{International Conference on Algorithmic Learning Theory},
pp. \bfpage{946}--\blpage{985}
(\byear{2023}).
\bcomment{PMLR}
\end{bchapter}
\endbibitem

\bibitem[\protect\citeauthoryear{Ho et~al.}{2020}]{ho2020denoising}
\begin{barticle}
\bauthor{\bsnm{Ho}, \binits{J.}},
\bauthor{\bsnm{Jain}, \binits{A.}},
\bauthor{\bsnm{Abbeel}, \binits{P.}}:
\batitle{Denoising diffusion probabilistic models}.
\bjtitle{Advances in neural information processing systems}
\bvolume{33},
\bfpage{6840}--\blpage{6851}
(\byear{2020})
\end{barticle}
\endbibitem

\bibitem[\protect\citeauthoryear{Sohl-Dickstein et~al.}{2015}]{sohl2015deep}
\begin{bchapter}
\bauthor{\bsnm{Sohl-Dickstein}, \binits{J.}},
\bauthor{\bsnm{Weiss}, \binits{E.}},
\bauthor{\bsnm{Maheswaranathan}, \binits{N.}},
\bauthor{\bsnm{Ganguli}, \binits{S.}}:
\bctitle{Deep unsupervised learning using nonequilibrium thermodynamics}.
In: \bbtitle{International Conference on Machine Learning},
pp. \bfpage{2256}--\blpage{2265}
(\byear{2015}).
\bcomment{pmlr}
\end{bchapter}
\endbibitem

\bibitem[\protect\citeauthoryear{Song et~al.}{2021}]{song2021maximum}
\begin{barticle}
\bauthor{\bsnm{Song}, \binits{Y.}},
\bauthor{\bsnm{Durkan}, \binits{C.}},
\bauthor{\bsnm{Murray}, \binits{I.}},
\bauthor{\bsnm{Ermon}, \binits{S.}}:
\batitle{Maximum likelihood training of score-based diffusion models}.
\bjtitle{Advances in neural information processing systems}
\bvolume{34},
\bfpage{1415}--\blpage{1428}
(\byear{2021})
\end{barticle}
\endbibitem

\bibitem[\protect\citeauthoryear{Vahdat et~al.}{2021}]{vahdat2021score}
\begin{barticle}
\bauthor{\bsnm{Vahdat}, \binits{A.}},
\bauthor{\bsnm{Kreis}, \binits{K.}},
\bauthor{\bsnm{Kautz}, \binits{J.}}:
\batitle{Score-based generative modeling in latent space}.
\bjtitle{Advances in neural information processing systems}
\bvolume{34},
\bfpage{11287}--\blpage{11302}
(\byear{2021})
\end{barticle}
\endbibitem

\bibitem[\protect\citeauthoryear{Lipman et~al.}{2022}]{lipman2022flow}
\begin{botherref}
\oauthor{\bsnm{Lipman}, \binits{Y.}},
\oauthor{\bsnm{Chen}, \binits{R.T.}},
\oauthor{\bsnm{Ben-Hamu}, \binits{H.}},
\oauthor{\bsnm{Nickel}, \binits{M.}},
\oauthor{\bsnm{Le}, \binits{M.}}:
Flow matching for generative modeling.
arXiv preprint arXiv:2210.02747
(2022)
\end{botherref}
\endbibitem

\bibitem[\protect\citeauthoryear{Liu et~al.}{2022}]{liu2022flow}
\begin{botherref}
\oauthor{\bsnm{Liu}, \binits{X.}},
\oauthor{\bsnm{Gong}, \binits{C.}},
\oauthor{\bsnm{Liu}, \binits{Q.}}:
Flow straight and fast: Learning to generate and transfer data with rectified flow.
arXiv preprint arXiv:2209.03003
(2022)
\end{botherref}
\endbibitem

\bibitem[\protect\citeauthoryear{Liu}{2022}]{liu2022rectified}
\begin{botherref}
\oauthor{\bsnm{Liu}, \binits{Q.}}:
Rectified flow: A marginal preserving approach to optimal transport.
arXiv preprint arXiv:2209.14577
(2022)
\end{botherref}
\endbibitem

\bibitem[\protect\citeauthoryear{Albergo and Vanden-Eijnden}{2022}]{albergo2022building}
\begin{botherref}
\oauthor{\bsnm{Albergo}, \binits{M.S.}},
\oauthor{\bsnm{Vanden-Eijnden}, \binits{E.}}:
Building normalizing flows with stochastic interpolants.
arXiv preprint arXiv:2209.15571
(2022)
\end{botherref}
\endbibitem

\bibitem[\protect\citeauthoryear{Albergo et~al.}{2023a}]{albergo2023stochastic}
\begin{botherref}
\oauthor{\bsnm{Albergo}, \binits{M.S.}},
\oauthor{\bsnm{Boffi}, \binits{N.M.}},
\oauthor{\bsnm{Vanden-Eijnden}, \binits{E.}}:
Stochastic interpolants: A unifying framework for flows and diffusions.
arXiv preprint arXiv:2303.08797
(2023)
\end{botherref}
\endbibitem

\bibitem[\protect\citeauthoryear{Albergo et~al.}{2023b}]{albergo2023stochastic2}
\begin{botherref}
\oauthor{\bsnm{Albergo}, \binits{M.S.}},
\oauthor{\bsnm{Goldstein}, \binits{M.}},
\oauthor{\bsnm{Boffi}, \binits{N.M.}},
\oauthor{\bsnm{Ranganath}, \binits{R.}},
\oauthor{\bsnm{Vanden-Eijnden}, \binits{E.}}:
Stochastic interpolants with data-dependent couplings.
arXiv preprint arXiv:2310.03725
(2023)
\end{botherref}
\endbibitem

\bibitem[\protect\citeauthoryear{Yang et~al.}{2023}]{yang2023diffusion}
\begin{barticle}
\bauthor{\bsnm{Yang}, \binits{L.}},
\bauthor{\bsnm{Zhang}, \binits{Z.}},
\bauthor{\bsnm{Song}, \binits{Y.}},
\bauthor{\bsnm{Hong}, \binits{S.}},
\bauthor{\bsnm{Xu}, \binits{R.}},
\bauthor{\bsnm{Zhao}, \binits{Y.}},
\bauthor{\bsnm{Zhang}, \binits{W.}},
\bauthor{\bsnm{Cui}, \binits{B.}},
\bauthor{\bsnm{Yang}, \binits{M.-H.}}:
\batitle{Diffusion models: A comprehensive survey of methods and applications}.
\bjtitle{ACM Computing Surveys}
\bvolume{56}(\bissue{4}),
\bfpage{1}--\blpage{39}
(\byear{2023})
\end{barticle}
\endbibitem

\bibitem[\protect\citeauthoryear{Chan et~al.}{2024}]{chan2024tutorial}
\begin{barticle}
\bauthor{\bsnm{Chan}, \binits{S.}}, \betal:
\batitle{Tutorial on diffusion models for imaging and vision}.
\bjtitle{Foundations and Trends{\textregistered} in Computer Graphics and Vision}
\bvolume{16}(\bissue{4}),
\bfpage{322}--\blpage{471}
(\byear{2024})
\end{barticle}
\endbibitem

\bibitem[\protect\citeauthoryear{Ma et~al.}{2025}]{ma2025efficient}
\begin{botherref}
\oauthor{\bsnm{Ma}, \binits{Z.}},
\oauthor{\bsnm{Zhang}, \binits{Y.}},
\oauthor{\bsnm{Jia}, \binits{G.}},
\oauthor{\bsnm{Zhao}, \binits{L.}},
\oauthor{\bsnm{Ma}, \binits{Y.}},
\oauthor{\bsnm{Ma}, \binits{M.}},
\oauthor{\bsnm{Liu}, \binits{G.}},
\oauthor{\bsnm{Zhang}, \binits{K.}},
\oauthor{\bsnm{Ding}, \binits{N.}},
\oauthor{\bsnm{Li}, \binits{J.}}, et al.:
Efficient diffusion models: A comprehensive survey from principles to practices.
IEEE Transactions on Pattern Analysis and Machine Intelligence
(2025)
\end{botherref}
\endbibitem

\bibitem[\protect\citeauthoryear{Chen et~al.}{2023}]{chen2023score}
\begin{bchapter}
\bauthor{\bsnm{Chen}, \binits{M.}},
\bauthor{\bsnm{Huang}, \binits{K.}},
\bauthor{\bsnm{Zhao}, \binits{T.}},
\bauthor{\bsnm{Wang}, \binits{M.}}:
\bctitle{Score approximation, estimation and distribution recovery of diffusion models on low-dimensional data}.
In: \bbtitle{International Conference on Machine Learning},
pp. \bfpage{4672}--\blpage{4712}
(\byear{2023}).
\bcomment{PMLR}
\end{bchapter}
\endbibitem

\bibitem[\protect\citeauthoryear{Oko et~al.}{2023}]{oko2023diffusion}
\begin{bchapter}
\bauthor{\bsnm{Oko}, \binits{K.}},
\bauthor{\bsnm{Akiyama}, \binits{S.}},
\bauthor{\bsnm{Suzuki}, \binits{T.}}:
\bctitle{Diffusion models are minimax optimal distribution estimators}.
In: \bbtitle{International Conference on Machine Learning},
pp. \bfpage{26517}--\blpage{26582}
(\byear{2023}).
\bcomment{PMLR}
\end{bchapter}
\endbibitem

\bibitem[\protect\citeauthoryear{Block et~al.}{2020}]{block2020generative}
\begin{botherref}
\oauthor{\bsnm{Block}, \binits{A.}},
\oauthor{\bsnm{Mroueh}, \binits{Y.}},
\oauthor{\bsnm{Rakhlin}, \binits{A.}}:
Generative modeling with denoising auto-encoders and langevin sampling.
arXiv preprint arXiv:2002.00107
(2020)
\end{botherref}
\endbibitem

\bibitem[\protect\citeauthoryear{Han et~al.}{2024}]{han2024neural}
\begin{botherref}
\oauthor{\bsnm{Han}, \binits{Y.}},
\oauthor{\bsnm{Razaviyayn}, \binits{M.}},
\oauthor{\bsnm{Xu}, \binits{R.}}:
Neural network-based score estimation in diffusion models: Optimization and generalization.
arXiv preprint arXiv:2401.15604
(2024)
\end{botherref}
\endbibitem

\bibitem[\protect\citeauthoryear{Chen et~al.}{2024}]{chen2024overview}
\begin{botherref}
\oauthor{\bsnm{Chen}, \binits{M.}},
\oauthor{\bsnm{Mei}, \binits{S.}},
\oauthor{\bsnm{Fan}, \binits{J.}},
\oauthor{\bsnm{Wang}, \binits{M.}}:
An overview of diffusion models: Applications, guided generation, statistical rates and optimization.
arXiv preprint arXiv:2404.07771
(2024)
\end{botherref}
\endbibitem

\bibitem[\protect\citeauthoryear{Pavliotis}{2014}]{pavliotis2014stochastic}
\begin{botherref}
\oauthor{\bsnm{Pavliotis}, \binits{G.A.}}:
Stochastic processes and applications.
Texts in applied mathematics
\textbf{60}
(2014)
\end{botherref}
\endbibitem

\bibitem[\protect\citeauthoryear{Boyd and Vandenberghe}{2004}]{boyd2004convex}
\begin{bbook}
\bauthor{\bsnm{Boyd}, \binits{S.}},
\bauthor{\bsnm{Vandenberghe}, \binits{L.}}:
\bbtitle{Convex Optimization}.
\bpublisher{Cambridge university press}, \blocation{???}
(\byear{2004})
\end{bbook}
\endbibitem

\bibitem[\protect\citeauthoryear{Wainwright et~al.}{2008}]{wainwright2008graphical}
\begin{barticle}
\bauthor{\bsnm{Wainwright}, \binits{M.J.}},
\bauthor{\bsnm{Jordan}, \binits{M.I.}}, \betal:
\batitle{Graphical models, exponential families, and variational inference}.
\bjtitle{Foundations and Trends{\textregistered} in Machine Learning}
\bvolume{1}(\bissue{1--2}),
\bfpage{1}--\blpage{305}
(\byear{2008})
\end{barticle}
\endbibitem

\bibitem[\protect\citeauthoryear{Chen and Khoo}{2023}]{chen2023combining}
\begin{botherref}
\oauthor{\bsnm{Chen}, \binits{Y.}},
\oauthor{\bsnm{Khoo}, \binits{Y.}}:
Combining particle and tensor-network methods for partial differential equations via sketching.
arXiv preprint arXiv:2305.17884
(2023)
\end{botherref}
\endbibitem

\bibitem[\protect\citeauthoryear{Peng et~al.}{2023}]{peng2023generative}
\begin{botherref}
\oauthor{\bsnm{Peng}, \binits{Y.}},
\oauthor{\bsnm{Chen}, \binits{Y.}},
\oauthor{\bsnm{Stoudenmire}, \binits{E.M.}},
\oauthor{\bsnm{Khoo}, \binits{Y.}}:
Generative modeling via hierarchical tensor sketching.
arXiv preprint arXiv:2304.05305
(2023)
\end{botherref}
\endbibitem

\bibitem[\protect\citeauthoryear{Liberty et~al.}{2007}]{liberty2007randomized}
\begin{barticle}
\bauthor{\bsnm{Liberty}, \binits{E.}},
\bauthor{\bsnm{Woolfe}, \binits{F.}},
\bauthor{\bsnm{Martinsson}, \binits{P.-G.}},
\bauthor{\bsnm{Rokhlin}, \binits{V.}},
\bauthor{\bsnm{Tygert}, \binits{M.}}:
\batitle{Randomized algorithms for the low-rank approximation of matrices}.
\bjtitle{Proceedings of the National Academy of Sciences}
\bvolume{104}(\bissue{51}),
\bfpage{20167}--\blpage{20172}
(\byear{2007})
\end{barticle}
\endbibitem

\bibitem[\protect\citeauthoryear{Halko et~al.}{2011}]{halko2011finding}
\begin{barticle}
\bauthor{\bsnm{Halko}, \binits{N.}},
\bauthor{\bsnm{Martinsson}, \binits{P.-G.}},
\bauthor{\bsnm{Tropp}, \binits{J.A.}}:
\batitle{Finding structure with randomness: Probabilistic algorithms for constructing approximate matrix decompositions}.
\bjtitle{SIAM review}
\bvolume{53}(\bissue{2}),
\bfpage{217}--\blpage{288}
(\byear{2011})
\end{barticle}
\endbibitem

\end{thebibliography}

\end{document}